\tikzset{fleche/.style args={#1:#2}{postaction = decorate,decoration={name=markings,mark=at position #1 with {\arrow[>=stealth,#2,scale=2]{>}}}}}
\newcommand{\vv}{\mathrm{Z}} 
\newcommand{\yy}{\mathrm{Y}} 
\newcommand{\NP}{N} 
\newcommand{\N}{\mathbb{N}}\newcommand{\NN}{\mathbb{N}}
\newcommand{\Z}{\mathbb{Z}}
\newcommand{\R}{\mathbb{R}}
\newcommand{\ie}{\emph{i.e.} }
\renewcommand{\leq}{\leqslant}
\renewcommand{\geq}{\geqslant}
\renewcommand{\emptyset}{\varnothing}
\newcommand{\law}{\mathscr{L}}
\newcommand{\defeq}{:=}
\newcommand{\rinf}[1]{\underline{#1}} 					
\newcommand{\ind}[1]{\mathds{1}_{\{#1\}}}		
\newcommand{\indic}[1]{\mathds{1}_{#1}}			
\newcommand{\gfrac}[2]{\genfrac{}{}{0pt}{1}{#1}{#2}}
\newcommand{\Bu}{\smash{\underline{{B}}}}
\newcommand{\Pg}{\mathbf{P}}
\newcommand{\Eg}{\mathbf{E}}
\renewcommand{\P}{\mathbb{P}}
\newcommand{\E}{\mathbb{E}}
\renewcommand{\C}{\mathcal{C}}			
\newcommand{\Hl}{\mathcal{H}}			
\newcommand{\D}{\partial\mathcal{H}}	
\newcommand{\dist}{\mathbf{d}}			
\newcommand{\Rg}{\mathbf{R}}			
\newcommand{\Bg}{\mathbf{B}}			
\newcommand{\Vg}{\mathbf{V}}
\newcommand{\Vgr}{\mathbf{V}^r}			
\newcommand{\Vgb}{\mathbf{V}^b}						
\newcommand{\Sg}{\mathbf{S}}
\renewcommand{\thetag}{\mathbf{\Theta}}	
\newcommand{\Thetag}{\mathbf{\Theta}}	
\newcommand{\Deltag}{\mathbf{\Delta}}	
\renewcommand{\Eg}{\mathbf{E}}
\newcommand{\Rd}{R}			
\newcommand{\Bd}{B}			
\newcommand{\Vd}{V}			
\newcommand{\Vdr}{V^r}			
\newcommand{\Vdb}{V^b}			
\newcommand{\Sd}{S}			
\newcommand{\thetad}{\Theta}	
\newcommand{\Thetad}{\Theta}	
\newcommand{\Bdu}{\rinf{\Bd}}
\newcommand{\Rdu}{\rinf{\Rd}}
\newcommand{\Sdu}{\rinf{\Sd}}
\newcommand{\z}[2]{\smash{\mathrm{Z}^{(#1)}_{#2}}} 
\newcommand{\Rc}{\mathcal{R}}
\newcommand{\Bc}{\mathcal{B}}
\newcommand{\Vc}{\mathcal{V}}
\newcommand{\Vcr}{\mathcal{V}^r}
\newcommand{\Vcb}{\mathcal{V}^b}
\newcommand{\Sc}{\mathcal{S}}
\newcommand{\thetac}{\theta}	
\newcommand{\Thetac}{\theta}	
\newcommand{\Bcu}{\rinf{\Bc}}
\newcommand{\Rcu}{\rinf{\Rc}}
\newcommand{\Sh}{\Sc^{\scriptscriptstyle{+}}}
\newcommand{\Vh}{\Vc^{\scriptscriptstyle{+}}}
\newcommand{\St}{\smash{\check{\Sc}}}
\newcommand{\Vt}{\smash{\check{\Vc}}}
\newcommand{\Tu}{T^{\uparrow}}
\newcommand{\Td}{\mathbf{T}}
\newcommand{\Tud}{{\mathbf{T}^{\uparrow}}}
\newcommand{\Tup}{T^{\scriptscriptstyle{\uparrow}}}
\newcommand{\Tdown}{T}
\newtheorem{theo}{Theorem}
\newtheorem{prop}[theo]{Proposition}
\newtheorem{rmk}[theo]{Remark}
\newtheorem{cor}[theo]{Corollary}
\newtheorem{lem}[theo]{Lemma}
\newtheorem{conj}{Conjecture}
\renewcommand{\thesubsection}{\alph{subsection})}
\renewcommand{\thesubsubsection}{\roman{subsubsection})}
\begin{document}

\renewcommand{\contentsname}{Contents}
\renewcommand{\refname}{\textbf{References}}
\renewcommand{\abstractname}{Abstract}

\begin{center}
\huge{The geometry of a critical percolation cluster on the UIPT}\\
\bigskip
\bigskip
\bigskip

\large{Matthias Gorny\footnote{FMJH, Univ. Paris-Sud, Universit\'e Paris-Saclay. \emph{email:} {matthias.gorny@math.u-psud.fr}
}, \'Edouard Maurel-Segala\footnote{Univ. Paris-Sud, Universit\'e Paris Saclay. \emph{email:} {edouard.maurel-segala@math.u-psud.fr}}, Arvind Singh\footnote{CNRS, Univ. Paris-Sud, Universit\'e Paris-Saclay. \emph{email:} {arvind.singh@math.u-psud.fr}. Work partially supported by ANR MALIN. 
}} 
 
\bigskip \bigskip \bigskip 
\end{center}

\begin{abstract}
We consider a critical Bernoulli site percolation on the uniform infinite planar triangulation. We study the tail distributions of the peeling time, perimeter, and volume of the hull of a critical cluster. The exponents obtained here differs by a factor $2$ from those computed previously by Angel and Curien~\cite{AC15} in the case of critical site percolation on the uniform infinite \emph{half-plane} triangulation. 
\end{abstract}
\bigskip \bigskip

\textit{AMS 2010 subject classifications:} 05C80; 60K35

\textit{Keywords:} Random planar triangulation; Percolation; Critical exponents.

\bigskip \bigskip \bigskip


\section{Introduction}
\label{Introduction}

The Uniform Infinite Planar Triangulation (UIPT) provides a simple, yet rich model of random planar geometry. Since its introduction by Angel and Schramm~\cite{AS03}, it has been the focus of intensive research which fostered progresses in the understanding of the geometric properties of generic random metric spaces, which is of interest to both mathematicians and phycisists. In this paper, we focus on the  model of site percolation on the UIPT, first considered by Angel in~\cite{Ang03}. We study the size of a typical cluster at criticality, computing the exponents associated with its peeling time, the volume and the perimeter of its hull. 
\medskip

\textbf{The UIPT.} A \emph{planar map} is a proper embedding of a finite connected graph on the sphere $\mathbb{S}_2$ considered up to an orientation preserving homeomorphism of the space. In order to prevent undesirable symmetries, it is usual to root the maps by distinguishing a special oriented edge $e = (x,y)$ called the \emph{root edge}. We shall always do so and we call \emph{root vertex} the origin vertex $x$ of $e$. A planar map is a \emph{triangulation} if all its faces (the connected components of the complementary
of the image of the embedding) are triangles. More generally, we call \emph{triangulation with a boundary} a map where all but one of its faces are triangles. If this special face -- called outer face -- is simple and composed of $n$ vertices, we say that we have a \emph{triangulation of the $n$-gon}.  Depending on the types of graphs allowed, one can consider different families of triangulations. Following the classification introduced in~\cite{AS03}, we define 
\begin{enumerate}
\item[]\textbf{Type I graphs.} Double edges and loops are allowed.
\item[]\textbf{Type II graphs.} Double edges are allowed but loops are forbidden.  
\item[]\textbf{Type III graphs.} Double edges and loops are both forbidden.
\end{enumerate}
In this paper, we restrict our study to the case of type II triangulations. However, the results obtained here also apply to type I and type III triangulations with minor and mostly straightforward  modifications of the proofs. It is possible that these results may also extend to other types of planar maps such as quadrangulations but adapting our arguments to these other cases seems more delicate. 
\medskip

The set of rooted planar triangulation with $n$ faces is finite so we can consider the uniform measure to pick at random an element $(T_n,e_n)$ where $T_n$ is a triangulation with $n$ faces rooted at $e_n$. The geometrical properties of this object are directly related to the combinatorics associated with these families of graphs. Since the pioneer work of Tutte in the 60's, explicit formulas for counting these objects are available and provide insight on the structure of a finite random triangulation.  
\medskip

It is natural to inquire about the limit of a random triangulation when its size increases to infinity. The approach considered here is that of the \emph{local weak convergence} of graphs\footnote{There is another way to define the limit of a random triangulation, called \emph{macroscopic limit}, by re-scaling the triangulation (seen as a random metric space) in such way that it converges to a compact random metric space called the \emph{brownian map} \emph{c.f.}~\cite{LG13,Mier13}. This results have far reaching consequences and the limiting object is the subject of intensive ongoing research but we shall not be concerned with it here.} introduced by Benjamini and Schramm~\cite{BS01}. More precisely, define the distance $\dist$ on the set of rooted maps by
\[
\dist\big((T,e_T) , (T',e_{T'})\big) \defeq\inf\,\big\{\,(1+R)^{-1} : B_T(e_T,R)\neq B_{T'}(e_{T'},R)\,\big\},
\]
where $B_T(e_T,R)$ denotes the ball of radius $R$ in $T$, \ie the finite rooted map obtained by deleting all vertices in $T$ at distance greater than $R$ from the root $e_T$ with respect to the graph distance. The set of all finite maps can be completed with respect to this distance and the new objects in this completion are called \emph{infinite maps}. In a seminal paper~\cite{AS03}, Angel and Schramm established that there exists a random variable $(T_\infty,e_\infty)$ supported on infinite rooted planar triangulation which is the limit in law of uniform finite rooted triangulations $(T_n,e_n)$ for the convergence induced by $\dist$:
\[
B_{T_n}(e_{n},R)\underset{n\to+\infty}{\overset{\law}\longrightarrow} B_{T_\infty}(e_{\infty},R)\quad\mbox{for all $R>0$}.
\]
This limiting object $T_\infty$ is a proper one-ended planar graph called the Uniform Infinite Planar Triangulation (UIPT).\medskip

\textbf{Percolation on the UIPT.} Somewhat remarkably, the study of percolation and related models of statistical physics is often simpler on random planar maps than on an euclidean lattice, thanks to a spatial Markov property which we will recall in the next section. In particular, explicit formulas for critical parameters and related quantities can often be computed explicitely. In this paper, we consider a percolation on the vertices of the UIPT constructed as follow. Fix a percolation parameter $p\in (0,1)$. Given a realization of $T_\infty$, we color each site of the triangulation independently, in red with probability~$p$ or in blue with probability $1-p$. Let $\C$ denote the connected component of red sites that contains the root vertex of the UIPT. In~\cite{Ang03}, Angel proved that the critical parameter of this model is 
\[
p_c\defeq\sup\,\Big\{\,p\geqslant 0 : \Pg\big( |\C| = \infty \big) = 0\,\Big\} = \frac{1}{2}
\] 
and that, furthermore, there is no percolation at criticality. Therefore we have the dichotomy:
\begin{enumerate}
\item[$\bullet$] If $p>1/2$, there exists an infinite connected component of red site a.s.
\item[$\bullet$] If $p\leqslant 1/2$, all the red connected components are finite a.s.
\end{enumerate}
Let us point out that the a.s.~above is with respect to the annealed law, \ie it takes into account both the randomness from the map and from the coloring of the sites. In subsequent works~\cite{MN14,AC15,CK15}, these results were generalized to other kind of percolation models such as bond percolation as well as other classes of random maps such as quadragulations and maps on the half plane. 
\medskip

We are interested in the geometry of the red cluster $\C$ when the percolation is critical. Thus, from now on, we fix $p=p_c=1/2$. For the sake of simplicity and to avoid dealing with degenerated cases, we also make the harmless assumption that the root vertex is colored red (otherwise $\C=\emptyset$). Since the UIPT is one-ended and since $\C$ is finite, its complement has exactly one infinite connected component $\mathcal{D}$. The hull $\Hl$ of the cluster $\C$ is defined as the complement of~$\mathcal{D}$. Alternatively, $\Hl$ corresponds to ``filling in'' the hole in $\C$ by adding to it all the sites that it disconnects from infinity. We denote by $|\Hl|$ the volume of the hull, that is the number of sites in~$\Hl$. The boundary $\D$ of the hull is defined as the set of edges which connect a (red) vertex of $\Hl$ to a (blue) vertex of $\mathcal{D}$. See figure~\ref{fig:HullPerco} for an illustration.\medskip

\begin{figure}[t]
\centering
\begin{tikzpicture}[scale=0.26]
%
\fill[color=red!20,rounded corners](20,16.3) to (17.2,16.3) to (16.2,21.3) to (13,22.3) to (9,20.3) to (4.7,17.2) to (3.7,12) to (4.8,6.8) to (7.8,3.8) to (11,3.8) to (14,5.7) to (17.4,4.7) to (15.3,9) to (12,12.4) to (8.3,14) to (9.2,16.7) to (13,15.7) to (17,15.7) to (18.7,13) to (19.7,9)to(20.3,9) to (19.3,13)to(20.2,15.7) to (23.2,14.7)to (23.2,15.3)to(20,16.3);
%
\draw[fleche=0.6:black] (12,12)--(12,8);
%
\draw[thick] (12,12)--(15,9)--(17,5)--(14,6)--(15,9);
\draw[thick] (14,6)--(11,4)--(8,4)--(5,7)--(4,12)--(8,14)--(12,12);
\draw[thick] (8,14)--(9,17)--(4,12)--(5,17)--(9,17)--(9,20)--(5,17);
\draw[thick] (9,17) --(13,16)--(17,16)--(19,13)--(20,16)--(17,16) --(16,21)--(13,22)--(9,20);
\draw[thick] (20,9)--(19,13)--(20,16)--(23,15);
%
\draw (12,8)--(14,6);\draw (12,8)--(15,9);\draw (12,8)--(11,4);\draw (12,8)--(10,6);\draw (12,8)--(10,8);\draw (12,8)--(10,11);\draw (12,8)--(10,8);\draw (10,11)--(12,12);\draw (10,11)--(8,14);\draw (10,11)--(7,11);\draw (10,11)--(7.5,8.5);\draw (10,11)--(10,8);\draw (10,8)--(10,6);\draw (10,8)--(7.5,8.5);
\draw (7,11)--(8,14);\draw (7,11)--(4,12);\draw (7,11)--(5,7);\draw (7,11)--(7.5,8.5);\draw (7.5,8.5)--(5,7);\draw (7.5,8.5)--(10,6);\draw (7.5,8.5)--(8,4);\draw (10,6)--(8,4);\draw (10,6)--(11,4);\draw (11.5,19)--(13,22);\draw (11.5,19)--(9,20);\draw (11.5,19)--(9,17);\draw (11.5,19)--(13,16);\draw (15,18)--(13,16);\draw (15,18)--(17,16);\draw (15,18)--(16,21);\draw (15.5,12.5)--(12,14);\draw (15.5,12.5)--(16,10.5);\draw (15.5,12.5)--(17,13.5);\draw (16,10.5)--(17,13.5);\draw (11.5,19)to[out=20,in=180](13.5,19.5) to[out=0,in=80] (15,18);\draw (15,18)to[bend left](11.5,19);\draw(11.5,19)--(16,21);\draw (11.5,19)--(13.5,18.75)--(15,18);
%
\draw (26,14)--(24,18)--(21,20)--(19,21)--(17,25) --(13.5,25.5)--(10,24)--(7,23)--(4,20)--(0,18)--(2,15)--(1,11)--(3,9)--(2,6)--(6,4)--(8,2)--(12,2)--(14,0) --(17,1)--(20,4)--(23,6)--(27,10)--(23.5,12.75)--cycle;
\draw (20,9) to[out=60,in=-170] (22.75,11.75)to[out=10,in=145](27,10);\draw (20,9)to[out=-45,in=-160] (24,7.75) to[out=20,in=-140] (27,10);\draw(20,9)--(23,11)--(27,10)--(24,8.5)--(20,9);\draw(24,8.5)--(23,11);\draw (20,9)to[out=60,in=-170](23.5,12.75);
%
\draw(26,14)--(23,15)--(24,18)--(20,16)--(21,20)--(17,16)--(19,21)--(16,21)
--(17,25)--(13,22)--(13.5,25.5)--(10,24)--(13,22)--(7,23)--(9,20)--(4,20);
\draw (5,17)--(0,18);\draw (5,17)--(4,20);\draw (5,17)--(2,15);\draw (4,12)--(2,15);\draw (4,12)--(1,11);\draw (4,12)--(3,9);\draw (5,7)--(3,9);\draw (5,7)--(2,6);\draw (5,7)--(6,4); \draw (6,4)--(8,4)--(8,2)--(11,4)-- (12,2)--(14,6)--(14,0)--(17,5); \draw (17,5)--(17,1);\draw (17,5)--(20,4)--(18,9);\draw (20,4)--(20,9);\draw (17,5)--(18,9);\draw (12,14)--(9,17);\draw (12,14)--(8,14);\draw (16,10.5)--(18,9);\draw (16,10.5)--(15,9);\draw (16,10.5)--(12,14);\draw (12,14)--(12,12);\draw (12,14)--(15,9);\draw (12,14)--(17,13.5);\draw (12,14)--(17,16);\draw (12,14)--(13,16);\draw (17,13.5)--(17,16);\draw (17,13.5)--(19,13);\draw (17,13.5)--(18,9);\draw (17,13.5)--(20,9);\draw (18,9)--(15,9);\draw (18,9)--(20,9);\draw (20,9)--(23,6);\draw (23.5,12.75)--(19,13);\draw (23.5,12.75)--(20,16);\draw (23.5,12.75)--(23,15);\draw (23.5,12.75)--(26,14);\draw (0,18) to[bend right] (5,17) to[bend right] (0,18);
%
\draw[dotted,color=red,rounded corners](20,16.3) to (17.2,16.3) to (16.2,21.3) to (13,22.3) to (9,20.3) to (4.7,17.2) to (3.7,12) to (4.8,6.8) to (7.8,3.8) to (11,3.8) to (14,5.7) to (17.4,4.7) to (15.3,9) to (12,12.4) to (8.3,14) to (9.2,16.7) to (13,15.7) to (17,15.7) to (18.7,13) to (19.7,9)to(20.3,9) to (19.3,13)to(20.2,15.7) to (23.1,14.7) to (23.2,15.3)to(20,16.3);
%
\foreach \i in {(4,12),(5,7),(5,17),(8,4),(8,14),(9,17),(9,20),(10,8),(11,4),(12,12),(13,16),(13,22),(14,6),(15,9),(17,5),(16,21),(17,16),(19,13),(20,9),(20,16),(23,15),(15.5,12.5),(13.5,18.75)}{\draw \i node[color=red!10] {$\bullet$} node[color=red] {$\circ$};}
\foreach \i in {(0,18),(1,11),(2,6),(2,15),(3,9),(4,20),(6,4),(7,11),(7,23),(8,2),(7.5,8.5),(10,6),(10,11),(10,24),(12,2),(12,8),(11.5,19),(12,14),(14,0),(13.5,25.5),(15,18),(17,13.5),(17,1),(18,9),(17,25),(19,21),(20,4),(21,20),(23.5,12.75),(23,6),(27,10),(24,18),(26,14),(16,10.5),(2.5,17.5),(23,11),(24,8.5)} {\draw \i node[color=blue] {$\bullet$} node[color=blue] {$\circ$};}
\end{tikzpicture}
\caption{An example of the cluster $\C$ (the red vertices connected with edges colored in red) and its hull $\Hl$ (the region in light red). \textit{Here $|\Hl|=29$ and $|\D|=57$}.}
\label{fig:HullPerco}
\end{figure}
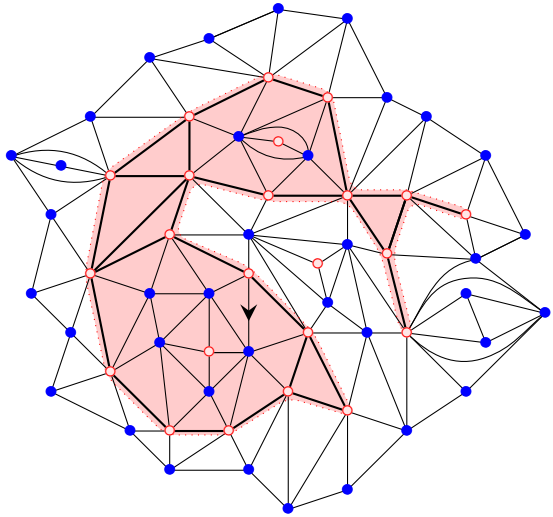

Given two non-negative functions $f$ and $g$, we use the notation $f \asymp g$ if there exist constants $c,C >0$ such that $ c \,g(x) \leqslant f(x) \leqslant C\, g(x)$ for all $x$ large enough. We also use the notation $f \lesssim g$ (respectively $f \gtrsim g$) when we only request the upper (respectively lower) bound to hold true. Our main result estimates the size of the hull $\Hl$ and of its boundary $\D$. 

\begin{theo}
\label{mainTheo}
Consider a critical site percolation on the UIPT. We have
\begin{equation}\label{resVolume}
\Pg(|\Hl|>n)  \asymp \frac{1}{n^{1/8}}
\end{equation}
and
\begin{equation}\label{resPerimeter}
\Pg(|\D|>n)  \asymp \frac{1}{n^{1/6}}.
\end{equation}
\end{theo}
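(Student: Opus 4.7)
The plan is to analyze a \emph{peeling exploration} of the hull $\Hl$, following the framework introduced by Angel~\cite{Ang03} and adapted to percolation by Angel--Curien~\cite{AC15}. I would set up a peeling algorithm starting at the root vertex: at each step, peel an edge on the boundary of the currently explored red region, reveal the adjacent triangle together with the colour of its third vertex, and either continue the exploration when the vertex is red or freeze the edge as an element of $\D$ when it is blue. The exploration stops as soon as the active perimeter $P_n$ reaches $0$, and the explored region is then exactly $\Hl$. Throughout, I would track the Markov chain $(P_n,V_n)$ formed by the active perimeter and the accumulated volume.

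The next step is to identify the transition law of $(P_n)$. Using the spatial Markov property of the UIPT together with Tutte-type enumerations of triangulations of a polygon, the chain admits transitions expressed as ratios of the partition functions of triangulations with a simple boundary. These transitions can be recognized as a Doob $h$-transform of the perimeter chain associated with the analogous peeling on the UIPT of the half plane, where $h$ is the harmonic function encoding the probability that the exploration eventually closes the hull. This $h$-transform is the mechanism responsible for the factor~$2$ in the exponents observed in~\cite{AC15}: in the half-plane case the perimeter is a genuine random walk in the domain of attraction of a spectrally negative $3/2$-stable law, with zero drift at $p_c=1/2$, whereas on the UIPT it is this walk conditioned to die in finite time.

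I would then translate~\eqref{resPerimeter} and~\eqref{resVolume} into estimates on the $h$-transformed walk. The perimeter $|\D|$ is essentially the overall maximum of $(P_n)$ until extinction, while $|\Hl|$ is the total accumulated $V_n$; each peeling step in which the current perimeter is $k$ moreover contributes a ``swallowed'' region whose size has tail $\asymp k^{-3/2}$, corresponding to the polynomial correction in the partition function $Z^{(k)}$. Combining the scaling of the perimeter trajectory (now a $3/2$-stable excursion weighted by $h$) with a careful accounting of these big jumps yields the two tails $n^{-1/6}$ and $n^{-1/8}$. The lower bounds follow from the single-big-jump mechanism of the $3/2$-stable walk, and the matching upper bounds are obtained by a union bound over peeling steps together with a Markov-type estimate.

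The main difficulty, I expect, is the rigorous identification of the harmonic function $h$ and the derivation of sharp two-sided asymptotics for the $h$-transformed chain, uniformly in the starting perimeter, as is needed to match both the upper and lower bounds in~\eqref{resPerimeter} and~\eqref{resVolume}. A secondary technical point is the uniform control of big jumps of the peeling step, which is crucial for the upper bound on $|\Hl|$, whereas the lower bound on $|\Hl|$ is obtained by engineering exactly one such large jump during a perimeter excursion of the right size.
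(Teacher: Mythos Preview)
Your proposal contains a fundamental structural misconception that would prevent the argument from going through. You model the exploration by a \emph{single} perimeter process $P_n$ and describe the UIPT peeling as the half-plane walk ``conditioned to die in finite time''. Both points are wrong. On the full-plane UIPT the peeling boundary is finite and carries \emph{both} red and blue vertices, so one must track the pair $(\Rg_n,\Bg_n)$; only the total $\Sg_n=\Rg_n+\Bg_n$ is a Doob $h$-transform of the $3/2$-stable walk, and it is the transform conditioning $\Sg$ to \emph{stay above $2$} (the map is one-ended), not to die. The red count $\Rg_n$ alone is not Markov, and the relevant stopping time is $\Thetag=\inf\{n:\Rg_n=0\}$, which has no direct interpretation as an extinction time of a one-dimensional $h$-transformed walk. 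The paper stresses this explicitly: the passage from the half-plane to the full plane forces one to work with a genuinely two-dimensional process, and this is where the factor $2$ in the exponents comes from.

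Consequently, several of your downstream identifications fail. The perimeter $|\D|$ is \emph{not} the maximum of a perimeter process; the paper shows instead (Proposition~\ref{PropPerimeter}) that $|\D|$ is comparable to the peeling time $\Thetag$ itself, via $|\D|=1+\Thetag-\Deltag+\Eg$ where $\Deltag$ is the last time the boundary is fully red. The core of the proof is then the estimate $\Pg(\Thetag>n)\asymp n^{-1/6}$, which is obtained by a chain of reductions entirely absent from your outline: one replaces the conditioning $\{\Sg\geq 2\}$ by $\{\Rd>0\}$, passes to continuous time to make $\Rc$ and $\Bc$ independent, and then encodes $\{\thetac>t\}$ through the ladder times $(T_n)$ of $\Bc$ and the last-passage times $(U_n)$ of $\Rc$ conditioned positive. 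The decisive ingredient is that $(T_n-U_n)$ is a symmetric random walk and, by a result of Vysotsky, is \emph{independent} of $(T_n+U_n)$ at fixed times; combined with Sparre--Andersen this yields the $n^{-1/6}$ tail. A ``single big jump'' heuristic does not capture this mechanism, and a union bound over peeling steps as you propose would not give matching upper and lower bounds for either quantity.
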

\medskip
Let us make a few comments about this result. Another model related to the UIPT is the so-called \emph{Uniform infinite half-plane triangulation} (UIHPT). It is the random map obtained by considering first the local limit, when the number of interior vertices goes to infinity, of a uniform finite triangulation of the $m$-gon and then by letting $m$ itself go to infinity. This construction yields an infinite planar triangulation with an infinite boundary (rooted on the boundary) which can be embedded in the upper half-plane, hence its name. In~\cite{AC15}, Angel and Curien studied different percolation models on half-plane maps. In particular they showed that, for the UIHPT, we have the following estimate for the size of a critical percolation cluster rooted on the boundary:
\begin{equation}\label{ACvol}
\Pg(|\Hl|>n)  = \frac{1}{n^{1/4 + o(1)}}
\end{equation}
and
\begin{equation}\label{ACperim}
\Pg(|\D|>n)  \asymp \frac{1}{n^{1/3}}.
\end{equation}
Thus, the exponents for the half-plane triangulation differ by a factor $2$ from those of Theorem~\ref{mainTheo} for the full plane triangulation. Yet, we do not have a convincing heuristics as to why this should be so. Let us also mention that the arguments given in this paper may also be used to sharpen~\eqref{ACvol}, removing the sub-polynomial correction term. In fact, it is intuitively clear that all the estimates above should hold up to an equivalence sign instead of a $\asymp$ sign but our approach cannot give such precise results. 
\medskip

Another closely related work is that of Curien and Kortchemsky~\cite{CK15} where, among other quantities, the authors compute  typical length of a critical percolation interface in the UIPT. More precisely, assuming that the end vertices of the root edge are respectively  blue and red, they established that the length $\gamma$ of the interface between the corresponding blue and red clusters satisfies:
\[
\Pg(|\gamma| > n)\sim \frac{c}{n^{1/3}}
\]
for some explicit positive constant $c$. Thus, the length $\gamma$ is of the same order as that of $|\D|$ on the UIHPT but it is much smaller than the perimeter given by~\eqref{resPerimeter}. The reason is that $\gamma$ corresponds to the minimum of the perimeters of two adjacent clusters which are correlated and  unlikely to be large simultaneously. 
\medskip

In \cite{AC15},~\cite{CK15} and in this paper, the authors always consider the hull $\Hl$ of the percolation cluster instead of looking directly at $\C$. This limitation stems from the fact that some key estimates on the size of a percolation cluster on finite Boltzmann maps are missing. However, the arguments developed here are fairly robust and would also allow to estimate the real volume provided those estimates were available. See remark~\ref{remarkRealVolume} at the end of section~\ref{SectionResolution} for additional details.
\medskip
 
The approach used in this paper is based on the peeling process which is a well known standard tool when dealing with random maps with a spatial Markov property. Yet, using it to study percolation on full maps instead of half maps as in~\cite{AC15} leads to several additional difficulties. The most stringent one being that we must work with two-dimensional processes instead of a one-dimensional random walk as in the half-plane case. We believe that, in addition to the results stated in Theorem~\ref{mainTheo}, the main contribution of the paper are in the methods and tools employed to study these processes and which may prove useful for studying other quantities related to the UIPT.

\medskip

\textbf{Organization of the paper.} In the next section, we recall the peeling procedure on the UIPT. This allows us to rewrite the estimates of Theorem~\ref{mainTheo} concerning the volume of the hull $|\Hl|$ in term of functionals of a two-dimensional Markov Chain. We show in section~\ref{SectionPerimeter} that the distribution of the size of the perimeter of the hull $|\D|$ is related to distribution of the time needed to discover the red cluster $\C$. From this point on, the initial problem boils down to estimating fluctuations of a particular two-dimensional process. Yet, this still turns out to be quite tricky to do directly. To overcome this difficulty, we apply various transformations to the original processes  in section~\ref{SectionSimplification}, effectively reducing the problem to that of studying the joint fluctuations of two independent random walks, one being conditioned to stay positive. Finally, the proof of the main theorem is carried out in section~\ref{SectionResolution}. The last section, in appendix, collects several technical estimates concerning random walks which are used throughout the paper.  

\section{Peeling of a percolation cluster}
\label{SectionEncoding}

The \emph{peeling process} of a uniform random planar map was introduced by Watabiki \cite{Wat95} and subsequently formalized by Angel \cite{Ang03,Ang05}. The key idea is to make use of the spatial Markov property of the map to reveal its faces one at a time. This approach which enable to construct the graph together with the percolation ``on the fly'' has become the \emph{de facto} standard approach for studying dynamics on random maps, \emph{c.f.}~\cite{Ang03,Ang05, AC15, MN14, CLG16}. We refer the reader to~\cite{Ang03,Cur16,CLG16} for the proofs and additional details  concerning the results stated in this section. \medskip

The peeling procedure is particularly simple in the case of triangulations. It goes as follow.
\begin{itemize}
\item We start by revealing only the root edge of the UIPT together with its two adjacent colored vertices. By splitting this edge in two, we can think of this graph as a rooted planar triangulation of the $2$-gon (without interior vertices).
\item At each step, we have a colored rooted finite planar triangulation of the $n$-gon with $n\geqslant 2$. In order to perform a new step, we choose an edge $e$ on the boundary of the already revealed region and we reveal the outer face that is adjacent to it. There are two cases to consider:
\begin{enumerate}
\item The third vertex of the newly revealed face is a new vertex. In this case, we simply reveal its color and we are left, again, with a colored rooted finite planar triangulation with a boundary of size $n+1$.   
\item The third vertex, say $v$, of the newly revealed face is located on the boundary. Then, adding this new face separates the undiscovered portion of the UIPT into two disjoints connected components with only one being infinite. Thus, we also reveal the finite triangulation (together with its coloring) to recover a colored finite planar triangulation with a boundary of size $n-d \geqslant 2$ where $d$ is the distance on the original boundary between the peeling edge $e$ and the third vertex $v$.       
\end{enumerate}  
\end{itemize}

\begin{figure}[t]
\centering
\begin{tabular}{cccc}

\begin{tikzpicture}[scale=1]
\foreach \k in {1,2,...,11} {\coordinate (A\k) at ({cos(360*(\k+7)/11)},{sin(360*(\k+7)/11)});}
\coordinate (D7) at ({cos(360*3/11)+2*sin(180/11)*cos(360*3/11+180/11-60)},{sin(360*3/11)+2*sin(180/11)*sin(360*3/11+180/11-60)});
\fill[color=gray!20]  (A1) -- (A2)--(A3)--(A4)--(A5)--(A6)--(A7)--(A8)-- (A9)--(A10)--(A11)--cycle;
\draw  (A7) -- (D7)--(A6);
\draw  (A7)--(A8)-- (A9)--(A10)--(A11)--(A1) -- (A2)--(A3)--(A4)--(A5)--(A6);
\draw[thick]  (A7) to node[below]{$e$} (A6);
\foreach \k in {1,2,...,11} {\draw (A\k) node[color=white]{$\bullet$}node{$\circ$};}
\draw  (D7) node[color=white]{$\bullet$}node{$\circ$};
\end{tikzpicture}

&

\begin{tikzpicture}[scale=1]
%
\foreach \k in {1,2,...,11} {\coordinate (A\k) at ({cos(360*(\k+7)/11)},{sin(360*(\k+7)/11)});}
%
\fill[color=gray!20]  (A1) -- (A2)--(A3)--(A4)--(A5)--(A6)--(A7)--(A8)-- (A9)--(A10)--(A11)--cycle;
%
\coordinate (B5) at ({2*cos(360*12/11)},{2*sin(360*(5+7)/11)});
\coordinate (C5) at ({1.5*cos(360*12/11)},{1.5*sin(360*(5+7)/11)});
\fill[color=gray!100]  (A6) to[out=90,in=130] (C5) to[out=-50,in=0] (A3) -- (A4) -- (A5)--(A6);
\draw (A7) to[out=90,in=120] (B5);
\draw (B5) to[out=-60,in=0] (A3);
\draw (A6) to[out=90,in=130] (C5);
\draw (C5) to[out=-50,in=0] (A3);
%
\draw  (A7)--(A8)-- (A9)--(A10)--(A11)--(A1) -- (A2)--(A3)--(A4)--(A5)--(A6);
\draw[thick]  (A7) to node[above]{$e$} (A6);
%
\foreach \k in {1,2,...,11} {\draw (A\k) node[color=white]{$\bullet$}node{$\circ$};}
%
\draw[>=stealth,<->] ({cos(360*(6+7)/11)},{sin(360*(6+7)/11)-0.1}) to node[pos=0.5,left]{$d$} ({cos(360*(3+7)/11)-0.05},{sin(360*(3+7)/11)+0.1});
\end{tikzpicture}
\end{tabular}
\caption{The two different cases when peeling a triangulation. The light gray parts are the faces which have already been discovered. The dark gray part is the finite region (free Boltzmann triangulations) discovered when revealing the outer face adjacent to $e$. Here, we didn't reveal the colors of the vertices.}
\label{fig:Peeling}
\end{figure}
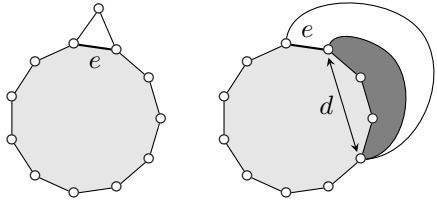

See figure~\ref{fig:Peeling} for an illustration of the different cases. Repeating this procedure \emph{ad infinitum} reveals the whole UIPT. The important fact here is that this peeling process is Markovian in the sense that the law of the newly revealed regions depends only on the geometry of the currently discovered graph through the size of its outer boundary. Moreover, the transition probabilities are explicit and remarkably simple. For the type II triangulations considered here, we define
\begin{equation}\label{defrwpk}
p_k\defeq
\left\{\begin{array}{cl}
\displaystyle{0}&\mbox{for}\quad k=0 \quad\mbox{or} \quad k\geqslant 2,\\[0.25cm]
\displaystyle{\frac{2}{3}}&\mbox{for}\quad k=1,\\[0.4cm]
\displaystyle{\frac{2(-2k-2)!}{4^{-k}(-k-1)!(-k+1)!}}&\mbox{for}\quad k\leqslant -1,
\end{array}\right.
\end{equation}
We have $\sum_{k\in \Z} p_k = 1$ thus $(p_k)_{k\in \Z}$ is a probability distribution on $\Z$ which we interpret as the step distribution of a right-continuous random walk, \ie a random walk that can perform arbitrary large downward jumps but can only jumps upward by $1$ (and, in this case, cannot stay still since $p_0 = 0$). We also set
\begin{equation}\label{defhtransform}
h(k)\defeq
\left\{\begin{array}{cl}
\displaystyle{\frac{\Gamma(k+\frac{1}{2})}{\Gamma(k)}}&\mbox{for}\quad k \geqslant 1,\\[0.3cm]
\displaystyle{0}&\mbox{for}\quad k \leqslant 0.
\end{array}\right.
\end{equation}
This function is sub-additive, non-decreasing and such that $h(k) \sim \sqrt{k}$ when $k$ goes to $+\infty$. We have the following description of the law of the peeling process. At each step, the probability transition $p_{n,m}$ to change the size of the outer boundary of the discovered triangulation from $n$ to $m$ is  given by the formula
\begin{equation}\label{lawmcpeeling}
p_{n,m} = \frac{h(m-1)}{h(n-1)} \,p_{m-n} \quad\mbox{for all $m,n \geqslant 2$.}
\end{equation} 
Moreover, when the size of the boundary is reduced by $n - m >0$ sites (a downward jump), then the finite triangulation revealed during the peeling process is distributed as a free colored Boltzmann triangulation of the $(n - m + 1)$-gon. On the other hand, when the size of the boundary increase by one, then the newly discovered vertex is colored independently in red or in blue with probability $1/2$. This description characterizes the law of the peeling process.  
\medskip

The function $h$ defined by~\eqref{defhtransform} is harmonic with respect to the random walk with step distribution $(p_k)_{k\in \Z}$ given by~\eqref{defrwpk}. This insures that~\eqref{lawmcpeeling} defines a proper transition kernel. Moreover, the particular form of Formula~\eqref{lawmcpeeling} reveals a remarkable property of this Markov chain: the function $h$ is the Doob's h-transform of the right-continuous random walk with step distribution $(p_k)_{k\in \Z}$, conditioned to stay positive\footnote{When peeling the UIHPT instead of the UIPT, there is an infinite outer boundary at all time so the increments of the boundary during the peeling are exactly those of the random walk with increment distributed according to $(p_k)_{k\in \Z}$,  making the analysis much simpler.}. Furthermore, one can verify that the sequence $(p_{-k})_{k\geqslant 1}$ is decreasing and we have 
\begin{equation}\label{asymppk}
\sum_{k\in \Z} k\, p_k = 0
\qquad \mbox{and}\qquad p_{-k} \underset{k\to+\infty}{\sim} \frac{1}{2\sqrt{\pi}k^{5/2}}.
\end{equation}
This shows that this random walk is centered and lies in the domain of normal attraction of a completely asymmetric distribution of index $3/2$ called the Airy law. These facts will play a major role in the rest of the paper.
\medskip

One of the strength of the peeling procedure is that the peeled edge may be chosen arbitrarily at each step. In the case of a percolated map, it is natural to choose it in such way that the red and blue vertices on the boundary of the revealed region remain separated at all time (\emph{i.e.} all the red vertices, if any, are adjacents). If all the sites on the boundary have the same color, then any arbitrary edge may be selected for peeling. One the other hand, when both colors are present on the boundary and are separated, then there are exactly two edges on the red/blue interface. By convention, we  choose the edge which, going counter-clockwise around the boundary, goes from blue to red. This strategy insures that the colors on the boundary remain separated on the next step.\medskip

Let us now fix some notations. For any $n\in \N$, we define
\begin{align*}
\Rg_n &\defeq \mbox{number of red sites on the boundary at step $n$},\\
\Bg_n &\defeq \mbox{number of blue sites on the boundary at step $n$},\\
\Sg_n &\defeq \Rg_n + \Bg_n = \mbox{size of the boundary at step $n$}.
\end{align*}
As we already mentioned, the length $\Sg$ of the peeling interface  is a Markov chain on $\{2,3,\ldots\}$ with transition probabilities $p_{n,m}$. Furthermore, it follows from the previous description that the pair $(\Rg,\Bg)$ also forms a Markov chain such that, for any $n\in \N$,
\begin{equation}\label{RBdef}
(\Rg_{n+1},\Bg_{n+1})  = f\big( (\Rg_n,\Bg_n) + (\Sg_{n+1} - \Sg_{n})(\eta_{n+1},1-\eta_{n+1})\big),
\end{equation}
where 
\begin{equation}\label{deffreflection}
f(r,b) \defeq  (r,b)\ind{r,b\geqslant 0} + (r+b,0)\ind{b < 0} + (0,b+r)\ind{r < 0}
\end{equation}
acts by reflecting $r$ and $b$ inside the first quadrant, and where $\eta_n$ indicates the color of the boundary sites which were ``concerned'' by the $n$-th step, \ie
\[
\eta_n \defeq \mathds{1}_{\left\{
\begin{array}{l}
\mbox{Either a new red vertex is discovered at the $n$-th step or the}\\
\mbox{discovered face reattaches itself to the boundary going counter-}\\
\mbox{clockise, \ie the vertices swallowed are on the left of the peeling}\\
\mbox{edge.}
\end{array}\right\}.
}
\]

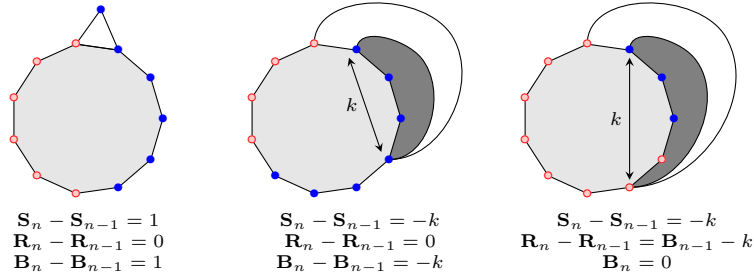
\begin{figure}[t]
\centering
\scriptsize{\begin{tabular}{cccc}

\begin{tikzpicture}[scale=1]
\foreach \k in {1,2,...,11} {\coordinate (A\k) at ({cos(360*(\k+7)/11)},{sin(360*(\k+7)/11)});}
\coordinate (D7) at ({cos(360*3/11)+2*sin(180/11)*cos(360*3/11+180/11-60)},{sin(360*3/11)+2*sin(180/11)*sin(360*3/11+180/11-60)});
\fill[color=gray!20]  (A1) -- (A2)--(A3)--(A4)--(A5)--(A6)--(A7)--(A8)-- (A9)--(A10)--(A11)--cycle;
\draw  (A7) -- (D7)--(A6)--cycle;
\draw  (A1) -- (A2)--(A3)--(A4)--(A5)--(A6)--(A7)--(A8)-- (A9)--(A10)--(A11)--cycle;
\foreach \k in {7,8,...,11,1} {\draw (A\k) node[color=red!20] {$\bullet$} node[color=red] {$\circ$};}
\foreach \k in {2,...,6} {\draw[color=blue]  (A\k) node {$\bullet$};}
\draw[color=blue]  (D7) node {$\bullet$};
\end{tikzpicture}

&~&

\begin{tikzpicture}[scale=1]
%
\foreach \k in {1,2,...,11} {\coordinate (A\k) at ({cos(360*(\k+7)/11)},{sin(360*(\k+7)/11)});}
%
\fill[color=gray!20]  (A1) -- (A2)--(A3)--(A4)--(A5)--(A6)--(A7)--(A8)-- (A9)--(A10)--(A11)--cycle;
%
\coordinate (B5) at ({2*cos(360*12/11)},{2*sin(360*(5+7)/11)});
\coordinate (C5) at ({1.5*cos(360*12/11)},{1.5*sin(360*(5+7)/11)});
\fill[color=gray!100]  (A6) to[out=90,in=130] (C5) to[out=-50,in=0] (A3) -- (A4) -- (A5)--(A6);
\draw (A7) to[out=90,in=120] (B5);
\draw (B5) to[out=-60,in=0] (A3);
\draw (A6) to[out=90,in=130] (C5);
\draw (C5) to[out=-50,in=0] (A3);
%
\draw  (A1) -- (A2)--(A3)--(A4)--(A5)--(A6)--(A7)--(A8)-- (A9)--(A10)--(A11)--cycle;
%
\foreach \k in {7,8,...,10} {\draw (A\k) node[color=red!20] {$\bullet$} node[color=red] {$\circ$};}
\foreach \k in {11,1,2,...,6} {\draw[color=blue]  (A\k) node {$\bullet$};}
%
\draw[>=stealth,<->] ({cos(360*(6+7)/11)-0.1},{sin(360*(6+7)/11)-0.1}) to node[pos=0.5,left]{$k$} ({cos(360*(3+7)/11)-0.1},{sin(360*(3+7)/11)+0.1});
\end{tikzpicture}

&

\begin{tikzpicture}[scale=1]
\foreach \k in {1,2,...,11} {\coordinate (A\k) at ({cos(360*(\k+7)/11)},{sin(360*(\k+7)/11)});}
\fill[color=gray!20]  (A1) -- (A2)--(A3)--(A4)--(A5)--(A6)--(A7)--(A8)-- (A9)--(A10)--(A11)--cycle;
\coordinate (B5) at ({2*cos(360*12/11)},{2*sin(360*(5+7)/11)});
\coordinate (C5) at ({1.5*cos(360*12/11)},{1.5*sin(360*(5+7)/11)});
\fill[color=gray!100]  (A6) to[out=90,in=130] (C5) to[out=-50,in=0] (A2) -- (A3)--(A4) -- (A5)--(A6);
\draw (A7) to[out=90,in=120] (B5);
\draw (B5) to[out=-60,in=0] (A2);
\draw (A6) to[out=90,in=130] (C5);
\draw (C5) to[out=-50,in=0] (A2);
\draw  (A1) -- (A2)--(A3)--(A4)--(A5)--(A6)--(A7)--(A8)-- (A9)--(A10)--(A11)--cycle;
\foreach \k in {7,8,...,11,1,2,3} {\draw (A\k) node[color=red!20] {$\bullet$} node[color=red] {$\circ$};}
\foreach \k in {4,5,6} {\draw[color=blue]  (A\k) node {$\bullet$};}
\draw[>=stealth,<->] ({cos(360*(6+7)/11)},{sin(360*(6+7)/11)-0.1}) to node[pos=0.5,left]{$k$} ({cos(360*(2+7)/11)},{sin(360*(2+7)/11)+0.1});
\end{tikzpicture}

\\
$\Sg_n-\Sg_{n-1}=1$ &~&$\Sg_{n}-\Sg_{n-1}=-k$&$\Sg_{n}-\Sg_{n-1}=-k$\\
$\Rg_n-\Rg_{n-1}=0$ &~&$\Rg_n-\Rg_{n-1}=0$&$\Rg_n-\Rg_{n-1}=\Bg_{n-1}-k$\\
$\Bg_n-\Bg_{n-1}=1$ &~&$\Bg_n-\Bg_{n-1}=-k$ &$\Bg_n=0$
\end{tabular}}
\caption{Different cases when peeling a percolation cluster on the UIPT at the $n$-th step (case where $\eta_n=0$). The light gray parts are the faces discovered before this step. The dark gray parts are the finite regions (free Boltzmann triangulations) discovered at this step.}
\label{fig:PeelingPerco}
\end{figure}

We call a step with $\eta = 1$ (resp. $\eta = 0$) a red step (resp. blue step). Beware however that a red (resp. blue) step needs not change the number of red  (resp. blue) vertices on the boundary if there are none and it can also reduce the number of vertices of the opposite color.
\medskip

Since we consider an i.i.d.~critical percolation of the UIPT with $p_c = 1/2$, the sequence $(\eta_i)_{i\geqslant 1}$ is i.i.d.~Bernoulli with parameter $1/2$ and is independent of the Markov chain $\Sg$.  
\medskip

We introduce the peeling time
\[
\thetag \defeq \inf\,\{\,n \geqslant 1 : \Rg_n = 0\,\}
\]
which corresponds to the time the red cluster of the origin is discovered. Since there is no infinite component at criticality, the peeling time $\thetag$ is almost surely finite. In order to study the volume of the red cluster, it is important to control precisely the tail distribution of $\thetag$. We will prove the following estimate.

\begin{figure}[t]
\centering
\newcommand{\dgreen}{green!60!black}
\begin{tikzpicture}[scale=0.27]
%
\fill[color=gray!50] (12,12)--(10,11)--(7,11)--(7.5,8.5)--(10,6)--(12,8)--(12,12);
\fill[color=gray!50] (8,14)--(4,12)--(5,17)--(9,20)--(13,22)--(16,21)--(17,16)--(13,16)--(9,17)--(8,14);
\fill[color=gray!50] (12,12)--(15,9)--(14,6)--(17,5)--(20,4)--(18,9)--(17,13.5)--(12,14)--(12,12);
\fill[color=gray!50] (19,13)--(20,16)--(17,16)--cycle;
\fill[color=gray!50] (27,10)--(24,8.5)--(23,11)--cycle;
%
\draw (12,12)--(15,9)--(17,5)--(14,6)--(15,9);
\draw (14,6)--(11,4)--(8,4)--(5,7)--(4,12)--(8,14)--(12,12);
\draw (8,14)--(9,17)--(4,12)--(5,17)--(9,17)--(9,20)--(5,17);
\draw (9,17) --(13,16)--(17,16)--(19,13)--(20,16)--(17,16) --(16,21)--(13,22)--(9,20);
\draw (20,9)--(19,13)--(20,16)--(23,15);
%
\draw (12,8)--(14,6);\draw (12,8)--(15,9);\draw (12,8)--(11,4);\draw (12,8)--(10,6);\draw (12,8)--(10,8);\draw (12,8)--(10,11);\draw (12,8)--(10,8);\draw (10,11)--(12,12);\draw (10,11)--(8,14);\draw (10,11)--(7,11);\draw (10,11)--(7.5,8.5);\draw (10,11)--(10,8);\draw (10,8)--(10,6);\draw (10,8)--(7.5,8.5);
\draw (7,11)--(8,14);\draw (7,11)--(4,12);\draw (7,11)--(5,7);\draw (7,11)--(7.5,8.5);\draw (7.5,8.5)--(5,7);\draw (7.5,8.5)--(10,6);\draw (7.5,8.5)--(8,4);\draw (10,6)--(8,4);\draw (10,6)--(11,4);\draw (11.5,19)--(13,22);\draw (11.5,19)--(9,20);\draw (11.5,19)--(9,17);\draw (11.5,19)--(13,16);\draw (15,18)--(13,16);\draw (15,18)--(17,16);\draw (15,18)--(16,21);\draw (15.5,12.5)--(12,14);\draw (15.5,12.5)--(16,10.5);\draw (15.5,12.5)--(17,13.5);\draw (16,10.5)--(17,13.5);\draw (11.5,19)to[out=20,in=180](13.5,19.5) to[out=0,in=80] (15,18);\draw (15,18)to[bend left](11.5,19);\draw(11.5,19)--(16,21);\draw (11.5,19)--(13.5,18.75)--(15,18);
%
\draw (26,14)--(24,18)--(21,20)--(19,21)--(17,25) --(13.5,25.5)--(10,24)--(7,23)--(4,20)--(0,18)--(2,15)--(1,11)--(3,9)--(2,6)--(6,4)--(8,2)--(12,2)--(14,0) --(17,1)--(20,4)--(23,6)--(27,10)--(23.5,12.75)--cycle;
\draw (20,9) to[out=60,in=-170] (22.75,11.75)to[out=10,in=145](27,10);\draw (20,9)to[out=-45,in=-160] (24,7.75) to[out=20,in=-140] (27,10);\draw(20,9)--(23,11)--(27,10)--(24,8.5)--(20,9);\draw(24,8.5)--(23,11);\draw (20,9)to[out=60,in=-170](23.5,12.75);
%
\draw(26,14)--(23,15)--(24,18)--(20,16)--(21,20)--(17,16)--(19,21)--(16,21)
--(17,25)--(13,22)--(13.5,25.5)--(10,24)--(13,22)--(7,23)--(9,20)--(4,20);
\draw (5,17)--(0,18);\draw (5,17)--(4,20);\draw (5,17)--(2,15);\draw (4,12)--(2,15);\draw (4,12)--(1,11);\draw (4,12)--(3,9);\draw (5,7)--(3,9);\draw (5,7)--(2,6);\draw (5,7)--(6,4); \draw (6,4)--(8,4)--(8,2)--(11,4)-- (12,2)--(14,6)--(14,0)--(17,5); \draw (17,5)--(17,1);\draw (17,5)--(20,4)--(18,9);\draw (20,4)--(20,9);\draw (17,5)--(18,9);\draw (12,14)--(9,17);\draw (12,14)--(8,14);\draw (16,10.5)--(18,9);\draw (16,10.5)--(15,9);\draw (16,10.5)--(12,14);\draw (12,14)--(12,12);\draw (12,14)--(15,9);\draw (12,14)--(17,13.5);\draw (12,14)--(17,16);\draw (12,14)--(13,16);\draw (17,13.5)--(17,16);\draw (17,13.5)--(19,13);\draw (17,13.5)--(18,9);\draw (17,13.5)--(20,9);\draw (18,9)--(15,9);\draw (18,9)--(20,9);\draw (20,9)--(23,6);\draw (23.5,12.75)--(19,13);\draw (23.5,12.75)--(20,16);\draw (23.5,12.75)--(23,15);\draw (23.5,12.75)--(26,14);\draw (0,18) to[bend right] (5,17) to[bend right] (0,18);
%
\foreach \i in {(4,12),(5,7),(5,17),(8,4),(8,14),(9,17),(9,20),(10,8),(11,4),(12,12),(13,16),(13,22),(14,6),(15,9),(17,5),(16,21),(17,16),(19,13),(20,9),(20,16),(23,15),(15.5,12.5),(13.5,18.75)}{\draw \i node[color=red!10] {$\bullet$} node[color=red] {$\circ$};}
\foreach \i in {(0,18),(1,11),(2,6),(2,15),(3,9),(4,20),(6,4),(7,11),(7,23),(8,2),(7.5,8.5),(10,6),(10,11),(10,24),(12,2),(12,8),(11.5,19),(12,14),(14,0),(13.5,25.5),(15,18),(17,13.5),(17,1),(18,9),(17,25),(19,21),(20,4),(21,20),(23.5,12.75),(23,6),(27,10),(24,18),(26,14),(16,10.5),(2.5,17.5),(23,11),(24,8.5)} {\draw \i node[color=blue] {$\bullet$} node[color=blue] {$\circ$};}
\draw[color=\dgreen,thick,dashed] (12,10) to[out=0,in=0] (10.5,5);\draw[color=\dgreen,fleche=0.5:\dgreen,thick,dashed] (10.5,5) to[out=180,in=-100] (5.5,11.5);\draw[color=\dgreen,thick,dashed] (5.5,11.5) to[out=80,in=180] (9,12.5) to[out=20,in=180] (10.5,15.5); \draw[color=\dgreen,fleche=0.55:\dgreen,thick,dashed] (10.5,15.5) to[out=0,in=160] (16.5,14.75);\draw[color=\dgreen,thick,dashed](16.5,14.75) to[out=-20,in=90] (18.5,9) to[out=-90,in=-135] (21.5,7.5)to[out=45,in=-45] (22,11) to[out=135,in=-170] (23.5,14); \draw[color=\dgreen,fleche=0.35:\dgreen,thick,dashed] (23.5,14) to[out=10,in=-30] (23.5,16.5) to[out=150,in=-60] (16.5,23);\draw[color=\dgreen,fleche=0.75:\dgreen,thick,dashed] (16.5,23) to[out=120,in=45] (4.5,18.5); \draw[color=\dgreen,fleche=0.45:\dgreen,thick,dashed] (4.5,18.5) to[out=-135,in=90] (3,13) to[out=-90,in=120] (3.5,10.5) to[out=-60,in=90] (3.5,6.5) to[out=-90,in=-180] (5.5,5.5) to[out=0,in=150] (8,3) to[out=-30,in=-135]  (13,4);\draw[color=\dgreen,fleche=0.99:\dgreen,thick,dashed] (13,4) to[out=45,in=135] (18.5,2.5);
%
\draw[fleche=0.6:black] (12,12)--(12,8);
\draw[color=\dgreen] (13,9.67+0.2) node{\scriptsize{1}} (6.34+0.3,12.34-0.4) node{\scriptsize{10}} (18.34-0.325,10.5+0.5) node{\scriptsize{20}} (22.34,14.6) node{\scriptsize{30}}(14.25,24) node{\scriptsize{40}} (12.1,4) node{\scriptsize{60}} 
;
\draw[color=gray](10,12.34) node{\scriptsize{12}} (24.25,11.15) node{\scriptsize{26}} (19.34,17.34) node{\scriptsize{35}} (3.67,14.37) node{\scriptsize{50}}(18,3.34+0.5) node{\scriptsize{64}};
\end{tikzpicture}
\caption{The exploration of the percolation interface of the example in figure~\ref{fig:HullPerco} until time $\thetag$. The interface is in green. The
gray parts are the finite regions (free Boltzmann triangulations) discovered during the peeling process (\textit{they are revealed respectively at steps $12$, $26$, $35$, $50$ and $\thetag=64$}).}
\label{fig:ExPeeling}
\end{figure}
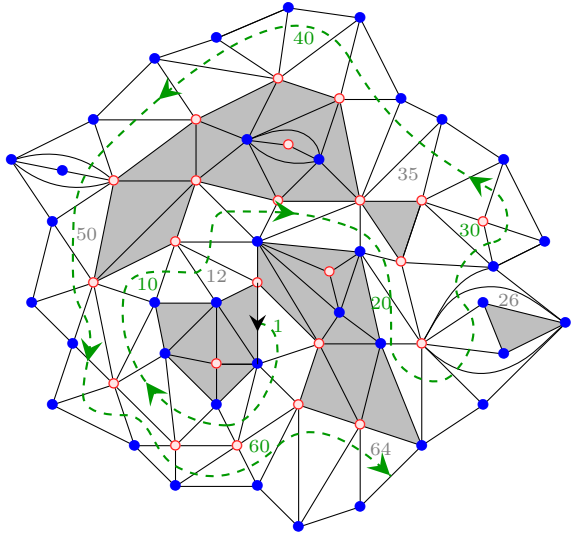

\begin{theo}\label{theoExploTime}
 Consider a critical percolation on the UIPT, we have
\[
\Pg(\thetag \geqslant n) \asymp \frac{1}{n^{1/6}}.
\]
\end{theo}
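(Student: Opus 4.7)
The plan is to exploit the two-dimensional Markov description of $(\Rg_n,\Bg_n)$ given above. Recall that $\Sg_n = \Rg_n + \Bg_n$ is, via Doob's $h$-transform (with $h(k)\sim\sqrt{k}$), a right-continuous random walk lying in the domain of normal attraction of a completely asymmetric stable law of index $3/2$, while $(\Rg_n,\Bg_n)$ itself is obtained from $\Sg$ by assigning each increment $\Sg_i - \Sg_{i-1}$ either to $\Rg$ or to $\Bg$ according to independent fair coins $\eta_i$ and then applying the reflection $f$ into the first quadrant. The event $\{\thetag \geqslant n\}$ then reads: for every $i < n$, neither a red update ($\eta_i = 1$ with $\Sg_i - \Sg_{i-1}\leqslant -\Rg_{i-1}$) nor an overflow from a blue update ($\eta_i = 0$ with $\Sg_i - \Sg_{i-1} < -\Bg_{i-1}$) has exhausted the red boundary.

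\medskip

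Following the outline of section~\ref{SectionSimplification}, the heart of the proof is to decouple this two-dimensional chain into a pair of \emph{independent} one-dimensional processes: a random walk $\Sc$ having the same step law as the unconditioned walk underlying $\Sg$ and conditioned to stay positive, together with an independent symmetric random walk $\Vc$ of the same stable index -- essentially a symmetrisation of the increments $(2\eta_i-1)(\Sg_i - \Sg_{i-1})$, which would govern $\Rg-\Bg$ in the absence of reflection. Concretely, one freezes the chain at each reflection event, reinterprets the fresh post-reflection increments as those of new independent walks, and checks via the explicit transition kernel~\eqref{lawmcpeeling} that the re-sampled increments obey the announced laws. In these new coordinates, $\{\thetag \geqslant n\}$ translates into a joint-excursion event of the form ``$\Vc$ stays inside $[-\Sc_\cdot,\Sc_\cdot]$ up to a time comparable to $n$'', with $\Sc$ independent of $\Vc$.

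\medskip

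Once the decoupling is achieved, the tail estimate is obtained by a scaling argument. By~\eqref{asymppk} both $\Sc$ and $\Vc$ fluctuate on the scale $n^{2/3}$, which puts us in the critical regime for the corresponding first-passage problem. Conditioning on the trajectory of $\Sc$ and combining the persistence of a stable$(3/2)$ walk conditioned to stay positive with the hitting-time estimates for symmetric stable walks collected in the appendix, the two contributions balance to give the exponent $n^{-1/6}$. The matching bounds are obtained by (a) an upper bound produced by dropping one of the two barriers (ignoring the red or blue reflection events can only delay $\thetag$) and applying the appendix estimates, and (b) a lower bound produced by exhibiting a positive-probability bulk scenario in which $\Sc$ stays of order $n^{2/3}$ throughout while $\Vc$ avoids $\pm\Sc$. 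The principal difficulty is the decoupling step itself: the reflection $f$ can cascade (a blue overflow shifts $\Rg$, which may trigger a red reflection at the next step, and so on), so extracting an honest independence statement requires careful bookkeeping of which reflection events can be absorbed into a time change and which have to be handled separately.
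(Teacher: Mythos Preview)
Your proposal has a genuine gap at the decoupling step, and it is not merely a bookkeeping difficulty. You want to reparametrise $(\Rg,\Bg)$ as a pair consisting of $\Sg=\Rg+\Bg$ (conditioned to stay large) together with an \emph{independent} symmetric walk governing $\Rg-\Bg$. But the step of $\Sg$ at time $i$ is $\xi_i:=\Sg_i-\Sg_{i-1}$ while the step of $\Rg-\Bg$ (before reflection) is $(2\eta_i-1)\xi_i$: these have the same absolute value at every step, so they are never independent, and summing does not rescue this because the increments are heavy-tailed rather than Gaussian. The conditioning $\{\Sg\geqslant 2\}$ sits on the sum and further entangles the two coordinates. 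No amount of restarting at reflection times produces the independent pair you announce, and without that independence your final scaling paragraph has nothing to act on: you never identify which persistence estimate would yield the factor $n^{-1/6}$ rather than, say, $n^{-1/3}$ or $n^{-1/2}$.

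The paper's route avoids sum/difference coordinates altogether. One first trades the conditioning $\{\Sd\geqslant 2\}$ for $\{\Rd>0\}$ at the cost of multiplicative constants (this uses sub-additivity of $h$ and Harris inequalities and is not automatic). After Poissonisation, $\Bc$ becomes a free unconditioned walk independent of $\Rc$ conditioned to stay positive. One then compares the strict descending ladder times $T_n$ of $\Bc$ with the last-passage times $U_n$ of the conditioned $\Rc$ at the corresponding ladder heights, and proves via a skip-free time-reversal argument the symmetry $(T_1,U_1)\overset{\law}{=}(U_1,T_1)$. With $\Lambda=\inf\{k:T_k<U_k\}$ one has $\thetac\leqslant U_\Lambda$ and $\{\Lambda>n\}\subset\{\thetac>T_n\}$; Sparre Andersen gives $\P(\Lambda>n)\sim c\,n^{-1/2}$, and a result of Vysotsky yields exact independence of $\{\Lambda=n\}$ from $T_n+U_n$ (whose tail is $\asymp t^{-1/3}$). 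The exponent $1/6$ emerges from combining $n^{-1/2}$ with the scale $n\asymp t^{1/3}$, not from a barrier-crossing heuristic.
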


Let us point out that, just as for Theorem~\ref{mainTheo}, the exponent $1/6$ associated with the peeling of a critical cluster of the full plane map differs again by a factor $2$ from the exponent $1/3$ obtained in~\cite{AC15} for the peeling time of a critical percolation cluster of the UIHPT. Let us also remark that, a realization of the UIPT being given, the
peeling time depends on the peeling strategy adopted. However, the law of
$\thetag$ itself does not depend on the choices of the peeled edges provided that we always select, whenever possible, an edge belonging to the interface of the red/blue cluster.
\medskip 

We also define quantities associated with the volume of the triangulation:
\begin{align*}
\Vg_n &\defeq\mbox{number of sites discovered in the triangulation after the $n$-th peeling step,}\\
\Vgr_n &\defeq \sum_{i=1}^n (\Vg_{i}-\Vg_{i-1})\eta_i =\mbox{volume discovered by the red steps,}\\
\Vgb_n &\defeq \sum_{i=1}^n (\Vg_{i}-\Vg_{i-1})(1-\eta_i) =\mbox{volume discovered by the blue steps.}
\end{align*}
Notice that, whenever the peeling reveals a new site, the volume increases by one. On the other hand, when the newly discovered face reattaches itself to the boundary at distance $d-1\geqslant 1$ from the peeled edge, then the number of sites added to the volume is distributed as the number of inner sites of a free Boltzmann triangulation of the $d$-gon. Therefore, conditionally on $(\Rg,\Bg)$, the increments $(\Vg_{i}-\Vg_{i-1})_{i\geqslant 1}$ form a sequence of independent random variables such that, for any $i\geqslant 1$, $n\in \N$ and $d\geqslant 2$,
\begin{multline}\label{formulaVolume}
\Pg(\Vg_{i}-\Vg_{i-1} = n \,|\, \Sg_{i} - \Sg_{i-1} = 1-d) 
=
\Pg\left(
\begin{array}{c}
\hbox{\scriptsize a free Boltzmann triangulation of}\\[-0.1cm]
\hbox{\scriptsize the $d$-gon has $n$ inner vertices}
\end{array}
\right)\\
=
2\,\frac{\displaystyle{(2d-3)d(d-1)(2d+3n-4)!}}{\displaystyle{n!(2d+2n-2)!}}\,\left(\dfrac{4}{27}\right)^{n}\left(\dfrac{4}{9}\right)^{d-1}
\end{multline}
with the convention $\Vg_{i}-\Vg_{i-1}=1$ if $\Sg_{i} - \Sg_{i-1} = 1$. Using \eqref{formulaVolume}, we can explicitly compute the expectation (\emph{c.f.}~\cite{CLG16}):
\begin{equation}\label{formulaVolumeExpectation}
\Eg\left[\Vg_{i}-\Vg_{i-1}  \,|\, \Sg_{i} - \Sg_{i-1} = 1- d
\right] = \frac{1}{3}(d-1)(2d-3) \qquad\hbox{for $d\geq 2$.}
\end{equation}  
We are interested in $|\Hl|$, the number of sites of the hull of the red cluster. However, we do not have direct access to this quantity from the peeling process. Nevertheless, it is clear that the total volume $\Vg_{\thetag}$ of all sites discovered by the peeling process up to time $\thetag$ is larger than $|\Hl|$. One the other hand, it is easy to see that every site discovered by red steps of the peeling process up to time $\thetag-1$ necessarily belongs to $\Hl$. As a consequence, we have the upper and lower bounds
\begin{equation*}
\Vgr_{\thetag-1} \leqslant |\Hl| \leqslant \Vg_\thetag. 
\end{equation*}
The estimate~\eqref{resVolume} of Theorem~\ref{mainTheo} will follow directly from the next result whose proof consumes a significant part of the paper.
\begin{prop} \label{propVol} Consider a critical percolation on the UIPT, we have
\[
\Pg\big(\Vgr_{\thetag-1} \geqslant n\big) \asymp \Pg\big(\Vg_{\thetag} \geqslant n\big)\asymp
\frac{1}{n^{1/8}}.
\]
\end{prop}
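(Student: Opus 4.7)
The target exponent $1/8$ factorizes as $(1/6)\times(3/4)$, corresponding to the heuristic relation $\Vg_t\asymp t^{4/3}$ in distribution, combined with Theorem~\ref{theoExploTime}. To see this, fix a peeling step with downward jump of size $k=d-1\geq 1$: by~\eqref{formulaVolumeExpectation} the added volume has conditional mean $\asymp k^2$, and a Stirling analysis of~\eqref{formulaVolume} shows the heavier tail $\asymp k^3/n^{3/2}$ for $n\gg k^2$. Combined with the jump asymptotics $p_{-k}\asymp k^{-5/2}$ from~\eqref{asymppk} (the h-transform factor $h(m-1)/h(n-1)$ in~\eqref{lawmcpeeling} being negligible when the perimeter is large), the single-step volume has unconditional tail $\Pg(\xi\geq n)\asymp n^{-3/4}$. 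Hence $\Vg_t$ lies in the domain of attraction of a totally asymmetric stable law of index $3/4$, with typical size $t^{4/3}$.

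\textbf{Upper bound on $\Vg_\thetag$.} Set $\tau\defeq A\,n^{3/4}$ with $A$ a large constant and decompose
\[
\Pg(\Vg_\thetag\geq n)\leq \Pg(\thetag\geq \tau)+\Pg(\Vg_\thetag\geq n,\,\thetag<\tau).
\]
Theorem~\ref{theoExploTime} gives $\Pg(\thetag\geq \tau)\lesssim A^{-1/6}n^{-1/8}$. For the second term, the crude bound $\Vg_\thetag\leq \Vg_\tau$ is insufficient since at $\tau=A\,n^{3/4}$ the variable $\Vg_\tau$ has typical size $A^{4/3}n$. One must instead exploit the conditioning $\{\thetag<\tau\}$: the red cluster being exhausted early forces control on the downward jumps of $\Sg$. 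Concretely, I would decompose on the size $D$ of the largest downward jump of $\Sg$ up to time $\thetag$ and bound its contribution $\asymp D^2$ to $\Vg_\thetag$; arguing via the joint description of $(\Rg,\Bg)$ from Section~\ref{SectionSimplification} as independent pieces (one of them a walk conditioned to stay positive) together with the random-walk estimates in the appendix, one then shows $\Pg(\Vg_\thetag\geq n,\thetag<\tau)\lesssim n^{-1/8}$.

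\textbf{Lower bound and main obstacle.} For $\Pg(\Vgr_{\thetag-1}\geq n)\gtrsim n^{-1/8}$ one exhibits a favorable scenario: condition on $\thetag\geq c\,n^{3/4}$, an event of probability $\asymp n^{-1/8}$ by Theorem~\ref{theoExploTime}. Since $(\eta_i)_{i\geq 1}$ is i.i.d.\ Bernoulli$(1/2)$ and independent of $\Sg$, with probability bounded below at least one of the first $c\,n^{3/4}$ steps is red, and the fluctuations of the conditioned boundary produce a downward jump of size $\asymp \sqrt n$ at such a red step, contributing a volume $\asymp n$ to $\Vgr_{\thetag-1}$; a positive-correlation inequality makes this precise. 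The sandwich $\Vgr_{\thetag-1}\leq |\Hl|\leq \Vg_\thetag$ then transfers both bounds to $|\Hl|$ and gives~\eqref{resVolume}. The \emph{main obstacle} is the sharp upper bound on $\Pg(\Vg_\thetag\geq n,\thetag<\tau)$: the ``short exploration with one atypically large downward jump'' pathway is not ruled out by elementary Markov or one-big-jump estimates, and genuinely requires the joint analysis of time, perimeter and volume that the paper builds up in Sections~\ref{SectionSimplification} and~\ref{SectionResolution}.
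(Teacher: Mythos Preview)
Your heuristic is correct and matches the paper's: the volume step is in the domain of attraction of a $3/4$-stable law, so $\Vg_t$ scales like $t^{4/3}$, and combined with $\Pg(\thetag>t)\asymp t^{-1/6}$ this suggests the exponent $1/8$. But what you have written is an outline with a self-admitted gap, not a proof.

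For the upper bound, your decomposition $\Pg(\Vg_\thetag\geq n)\leq\Pg(\thetag\geq An^{3/4})+\Pg(\Vg_\thetag\geq n,\,\thetag<An^{3/4})$ is \emph{not} what the paper does, and the second term is genuinely hard to control this way: on $\{\thetag<An^{3/4}\}$ there is no a priori reason the volume should be small, precisely because of the ``short exploration ended by one huge downward jump'' scenario you name. Decomposing on the largest jump $D$ does not obviously close the argument either, since the correlation between $D$ and $\thetag$ is exactly what is at stake. The paper avoids this entirely: after the reduction of Proposition~\ref{ReformulationProp} to independent continuous-time walks with $\Rc$ conditioned positive, it bounds $\Vc_\thetac\leq\Vc_{U_\Lambda}$ where $\Lambda$ is the first ladder index of $(T_n-U_n)$, then writes $\Vc_{U_\Lambda}=(\Vcr_{U_\Lambda}+\Vcb_{T_\Lambda})+(\Vcb_{U_\Lambda}-\Vcb_{T_\Lambda})$ and uses Vysotsky's independence result (Proposition~\ref{Vysotsky} and its corollary) together with the symmetry $(T_1,U_1)\overset{\law}{=}(U_1,T_1)$ to decouple $\Lambda$ from the volume random walk. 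This is the core idea you are missing: the Sparre Andersen/Vysotsky machinery gives exact independence between $\{\Lambda=k\}$ and $\Vcr_{U_k}+\Vcb_{T_k}$, which replaces the correlation problem by a simple sum $\sum_k(ck/v^{1/4}\wedge 1)k^{-3/2}$.

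For the lower bound, your sketch is closer in spirit to the paper's argument (build an event of the right probability on which the red volume is large, and use Harris inequality to decorrelate), but your description is too vague to be a proof. The paper's construction after the reduction is an explicit two-stage event $\mathcal{E}_1\cap\mathcal{E}_2$: first force $\theta>t$, $\Rc_t>\gamma t^{2/3}$, $\Bc_t\geq 0$, $\Bcu_t>-t^{2/3}$ (this costs $\asymp t^{-1/6}$ via several applications of Harris); then on $[t,2t]$ force $\Rc$ to stay above $\Rc_t/2$ and $\Bc$ to stay above $-t^{2/3}$ while the red volume increment exceeds $v=t^{4/3}$ (this costs only a constant). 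The point is that one must simultaneously keep $\theta$ alive past $2t$ \emph{and} collect volume in red steps; your one-line ``a downward jump of size $\asymp\sqrt{n}$ at a red step'' does not address how to guarantee $\theta$ has not already occurred when that jump happens.
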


\section{The perimeter \texorpdfstring{$\D$}{}}
\label{SectionPerimeter}

The aim of this section is to relate $\D$, the perimeter of the hull of the cluster, to the peeling time $\thetag$. It will enable us to focus only on $\thetag$ in latter sections. The following proposition, together with Theorem~\ref{theoExploTime}, directly implies~\eqref{resPerimeter} of Theorem~\ref{mainTheo}.

\begin{prop}\label{PropPerim} Consider the peeling process, starting from an initial red/blue separated boundary with at least one red vertex. Recall that $\thetag$ denotes the peeling time of the red cluster and $\D$ the set of edges between the hull $\Hl$ and its complement. There exists $c>0$ such that, for all $n$ large enough,
\begin{equation}\label{boundperimeter}
c\, \Pg\big(\thetag >  n \big) \leqslant \Pg\big(|\D| > n \big) \leqslant  2\, \Pg\big( \thetag > n/2-1 \big).
\end{equation}
\label{PropPerimeter}
\end{prop}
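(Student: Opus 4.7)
The plan is to establish the two bounds by very different arguments: the upper bound reduces to a deterministic edge count during the peeling, whereas the lower bound requires a probabilistic argument showing that a macroscopic fraction of the red--blue edges revealed during the peeling survive in~$\D$.

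For the upper bound, I would first show the deterministic inequality $|\D| \leq \thetag + 2$. At each peeling step, the peeled edge is a red--blue edge on the interface and the revealed triangular face has two other edges; a short case analysis on the colour of the third vertex shows that exactly one of these two new edges is red--blue, whether the step is a $+1$ step or a $-k$ reattachment step, and independently of which side is absorbed. The free Boltzmann triangulation of the absorbed $(k{+}1)$-gon revealed during a $-k$ step lies entirely inside the hull $\Hl$, so although it may contain many additional red--blue edges, none of them has an endpoint in $\mathcal{D}$ and so none contributes to $\D$. Combining this with the at most two initial red--blue interface edges of the separated starting boundary gives $|\D| \leq \thetag + 2$, and hence, for $n \geq 4$,
\[
\Pg(|\D|>n) \;\leq\; \Pg(\thetag>n-2) \;\leq\; \Pg(\thetag>n/2-1) \;\leq\; 2\,\Pg(\thetag>n/2-1),
\]
using that $\Pg(\thetag>\cdot)$ is non-increasing.

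For the lower bound, the idea is that a positive fraction of the $\thetag$ red--blue edges revealed during the peeling actually belong to $\D$. For each step $i \leq \thetag$ let $e_i = (r_i, b_i)$ denote the unique new red--blue edge produced; then $e_i \in \D$ if and only if the blue endpoint $b_i$ remains on the peeling boundary at time $\thetag$. Using the Markov property of the peeling process together with the heavy-tail estimate $p_{-k}\sim k^{-5/2}/(2\sqrt\pi)$ recalled in~\eqref{asymppk}, I would first show that, conditionally on $\{\thetag>2n\}$ and for $i \leq n$, the probability that $b_i$ is not swallowed by any subsequent $-k$ step before time $\thetag$ is bounded below by a universal constant $c_0 > 0$. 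Summing these estimates yields $\Eg[|\D|\mid \thetag>2n] \geq c_1 n$, while a second-moment bound (using the Markov property to control correlations between the survival events of distinct $b_i$'s) combined with Paley--Zygmund gives $\Pg(|\D|>c_1 n/2 \mid \thetag>2n) \geq c_2$. Combining with the polynomial regularity of $\Pg(\thetag>\cdot)$ established in Section~\ref{SectionResolution}, which in particular implies $\Pg(\thetag>2n) \geq c_3\,\Pg(\thetag>n)$, one obtains $\Pg(|\D|>n) \geq c\,\Pg(\thetag>n)$ up to a harmless adjustment of the constant.

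The main obstacle is the uniform survival estimate in the middle step of the lower-bound argument: controlling from below the probability that a vertex newly added to the peeling boundary is not absorbed during any subsequent $-k$ step. This relies on understanding the joint dynamics of the reflected two-dimensional chain $(\Rg,\Bg)$ and on fluctuation estimates of its two components, which is precisely the content of Sections~\ref{SectionSimplification} and~\ref{SectionResolution}; once those tools are in place the second-moment computation is routine.
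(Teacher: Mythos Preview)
Your upper-bound argument contains a genuine gap. The claim that the free Boltzmann triangulation swallowed at a $-k$ step ``lies entirely inside the hull $\Hl$'' is correct for all steps \emph{strictly before} $\thetag$, but it fails at the terminal step~$\thetag$. At that step the swallowed $(d+1)$-gon has, on its boundary, all $\Rg_{\thetag-1}$ remaining red vertices together with at least one blue vertex $v$; this vertex $v$ stays on the new (all-blue) outer boundary and therefore belongs to~$\mathcal D$. Every blue vertex of the swallowed region that is blue-connected to $v$ is then in $\mathcal D$ as well, and the red--blue edges joining such vertices to red vertices of $\C$ inside the region are genuine edges of $\D$. This is exactly the quantity the paper calls $\Eg$; it is random, unbounded, and the paper explicitly remarks that estimating its tail is ``burdensome''. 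A small concrete instance: start from $(r_0,b_0)=(1,5)$, make one red step with $\Sg_1-\Sg_0=-4$ so that $\thetag=1$, and take the revealed Boltzmann $5$-gon to be the fan triangulation around a single interior vertex $w$ coloured red. Then $w\in\C$, the four blue boundary vertices of the $5$-gon are all in $\mathcal D$, and one checks $|\D|=7>3=\thetag+2$. Thus no deterministic bound of the form $|\D|\le\thetag+O(1)$ can hold. The paper circumvents~$\Eg$ entirely by the Angel--Curien symmetry trick: after the last all-red time~$\Deltag$ one runs a second peeling with the opposite interface convention, and the two explorations together surround~$\Hl$, yielding $|\D|\le 2+\thetag+\widetilde\thetag$ with $\widetilde\thetag\overset{\law}{=}\thetag$. (A secondary inaccuracy: when the boundary is monochromatic red the peeled edge is red--red, so the ``exactly one new red--blue edge'' count is $0$ or $2$ at such steps, not $1$.)

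Your lower-bound sketch follows a genuinely different route from the paper. The paper uses the exact identity $|\D|=1+\thetag-\Deltag+\Eg\ge\thetag-\Deltag$ and then builds, via three nested events and Harris' inequality together with the Onion Lemma (Lemma~\ref{OnionLemma}), a scenario of probability $\gtrsim\Pg(\thetag>n)$ on which $\thetag-\Deltag\ge n$. Your Paley--Zygmund idea is reasonable in spirit, and the implication ``$b_i$ stays on the boundary until $\thetag\Rightarrow e_i\in\D$'' is correct, but the crucial input---a uniform lower bound on the conditional survival probability of $b_i$ given $\{\thetag>2n\}$---is asserted rather than proved, and it is not clear that the fluctuation estimates of Sections~\ref{SectionSimplification}--\ref{SectionResolution} yield it directly. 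The paper's event-construction avoids this difficulty entirely.
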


\subsection{Construction via a random walk}
\label{sectionDefHtransfrom}

In the previous section, we constructed the peeling process as a Markov chain defined on some abstract probability space. However, as we already mentioned, the law of the peeling process is related to that of a particular random walk $\Sd$ conditioned to stay positive. In order to exploit this property, it is convenient to introduce another probability measure $\P$ such that our original probability $\Pg$ may be formally written as 
\begin{equation}\label{idlaws}
\Pg\big(\,\cdot\, \big) = \P \big(\,\cdot\,|\, \Sd\geqslant 2\big).
\end{equation}
We will call $\Pg$  the \textit{peeling law} whereas  $\P$ will be called the \textit{random walk law}. 
\medskip

\textbf{Construction.} Fix a probability space $(\Omega,\mathcal{F},\P)$. The corresponding expectation is denoted by $\E$. 
When working with Markov processes, we will sometime use subscripts such as $\P_x$ and $\E_x$ to empathize the initial condition of processes (omitting to specify to which processes they apply when it is obvious). Let $\Sd = (\Sd_0,\Sd_1,\ldots)$ denote a random walk whose increments, under $\P$, are i.i.d.~and distributed according to the law $(p_k)_{k\in \Z}$ given by~\eqref{defrwpk}. We denote by $s_0$ the stating point of $\Sd$. We also consider a sequence $(\eta_i)_{i\geqslant 1}$ of i.i.d.~Bernoulli random variables with parameter $1/2$. We define the process $\Sg$ by
\[
\Sg_n \defeq  \Sd_n,\quad \mbox{for all $n\in \N$}.
\]
Using two notations for the same object may seems redundant but will prove useful later on. We will use the notation $\Sg$ when working under the peeling law $\Pg$ and the notation $\Sd$ when working under the random walk law $\P$. We also define the pair of processes $(\Rg,\Bg)$, as in~\eqref{RBdef}, by the recurrence relation
\begin{equation}\label{defRBwithS}
\left\{
\begin{array}{rcl}
(\Rg_{0},\Bg_{0})  &\defeq& (r_0,b_0), \\
(\Rg_{n+1},\Bg_{n+1})  &\defeq& f\big( (\Rg_n,\Bg_n) + (\Sd_{n+1} - \Sd_{n})(\eta_{n+1},1-\eta_{n+1})\big).
\end{array}
\right.
\end{equation}
where $f$ is the function given by~\eqref{deffreflection}.  We shall always enforce the relation $s_0 = b_0 + r_0$ so that $\Sg_n = \Rg_n + \Bg_n$ for all $n\in \N$. 
\medskip

According to the classical theory of Doob's h-transform, the formula~\eqref{lawmcpeeling} shows that the peeling law $\Pg$, introduced in the previous section, can be constructed from $\P$ via a change of measure with the Radon-Nikodym derivative given by the harmonic function $h$. In other word, suppose that we start the peeling process from an initially red/blue separated outer boundary of size $s_0 = r_0 + b_0$ with $b_0$ blue vertices and $r_0$ red vertices (the case $s_0 = 2$ corresponds to the peeling starting from a single edge but, due to its Markovian nature, it makes sense to start it from arbitrary outer boundary). Then, by starting the process $(\Sg,\Rg,\Bg)$ from $(r_0+b_0,r_0,b_0)$, we have that, for any non-negative test function $F$,
\begin{multline}
\label{htransformeq}
\Eg_{(r_0,b_0)}[F\big((\Sg_k,\Rg_k,\Bg_k)_{k\leqslant n}\big)] \\
\begin{aligned}
&=\E_{(r_0,b_0)}\left[\frac{h(\Sd_n - 1)}{h(\Sd_0 - 1)}F\big((\Sd_k,\Rg_k,\Bg_k)_{k\leqslant n}\big)\ind{\Sd_0,\ldots,\Sd_n \geqslant 2}\right]\\
&=\lim_{m\to\infty} \E_{(r_0,b_0)}\big[F\big((\Sd_k,\Rg_k,\Bg_k)_{k\leqslant n}\big)\, \big| \, \Sd_0,\ldots,\Sd_m \geqslant 2\big].
\end{aligned}
\end{multline}
The second equality states that, in a weak limit sense, the law of $\Sg$ under $\Pg$ is indeed the law of the random walk $\Sd$ ``conditioned to stay inside $[2,+\infty)$ forever''. Moreover, since $\Sd$ is oscillating, this law also coincides with the weak limit for the law of the random walk conditioned to reach arbitrary high heights before going below $2$, see for instance~\cite{BD94} for details. By convention, we shall now write 
\begin{equation}\label{defcondiinfini}
\P\big( \,\cdot\, \big| \, \Sd \geqslant 2 \big) \defeq \lim_{m\to\infty} \P\big( \,\cdot\,   \big| \, \Sd_0,\ldots,\Sd_m \geqslant 2\big)
\end{equation}
This notation, together with~\eqref{htransformeq}, makes~\eqref{idlaws} rigorous. From now on, we will use interchangeably $\Pg$ or $\P \big(\,\cdot\,|\, \Sd\geqslant 2\big)$, usually favouring $\Pg$ to shorten formula but using $\P \big(\,\cdot\,|\, \Sd\geqslant 2\big)$ when we make use of the specific form of the probability measure.\medskip

We need two lemmas before we can provide the proof of Proposition~\ref{boundperimeter}.

\begin{lem} For any initial starting point $s_0\geqslant 2$, the sequence of increments $(\Sg_{n+1} - \Sg_n)_{n\in \N}$ under $\Pg_{s_0}$ stochastically dominates the sequence $(\Sd_{n+1} - \Sd_n)_{n\in \N}$ of increments of the non-conditioned random walk under $\P_{s_0}$. 
\label{lemStocDomS}
\end{lem}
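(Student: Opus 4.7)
The plan is to reduce the statement to a one-step stochastic domination and then iterate via a Markovian coupling. Under $\Pg$, the process $\Sg$ is a Markov chain with transition kernel~\eqref{lawmcpeeling}, so conditionally on $\Sg_n = s$ (for $s\geqslant 2$), the law of the next increment $\Sg_{n+1} - \Sg_n$ is the probability measure
\[
\mu_s(k) \defeq \frac{h(s+k-1)}{h(s-1)}\,p_k, \qquad k\in\Z,
\]
and it will suffice to show that $\mu_s$ stochastically dominates the step law $(p_k)_{k\in\Z}$ for every $s\geqslant 2$.

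The one-step claim will come from a monotone likelihood ratio argument. The Radon--Nikodym derivative $\mu_s(k)/p_k = h(s+k-1)/h(s-1)$ is non-decreasing in $k$ because $h$ is non-decreasing on $\Z$ (it vanishes on $(-\infty,0]$ and is strictly increasing on $\{1,2,\dots\}$, as is clear from~\eqref{defhtransform}). Consequently the signed measure $\mu_s - p$ changes sign at most once --- non-positive for small $k$ and non-negative for large $k$ --- so, using that both measures have total mass $1$, the tail sum $\sum_{k\geqslant j}(\mu_s(k)-p_k)$ is non-negative for every $j\in\Z$. This is exactly the stochastic domination $\mu_s \succeq (p_k)$, and it is really the only input specific to the peeling dynamics; in that sense it is the crux of the lemma, though it is very mild.

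The sequence-level statement will then be obtained by an inductive Markovian coupling. Set $\Sg_0 = \Sd_0 = s_0$ and suppose $(\Sg_0,\dots,\Sg_n)$ and $(\Sd_0,\dots,\Sd_n)$ have been realised on a common probability space with $\Sg_{i+1}-\Sg_i \geqslant \Sd_{i+1}-\Sd_i$ for every $0\leqslant i\leqslant n-1$. Using a fresh uniform random variable $U_{n+1}$ independent of the past, I will set $X_{n+1} \defeq F_{\mu_{\Sg_n}}^{-1}(U_{n+1})$ and $Y_{n+1} \defeq F_{p}^{-1}(U_{n+1})$, where $F_\mu$ denotes the cumulative distribution function of a law $\mu$ on $\Z$; the one-step domination established above gives $X_{n+1}\geqslant Y_{n+1}$ almost surely, with $X_{n+1}\sim \mu_{\Sg_n}$ and $Y_{n+1}\sim (p_k)$. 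Putting $\Sg_{n+1} = \Sg_n + X_{n+1}$ and $\Sd_{n+1} = \Sd_n + Y_{n+1}$ extends the coupling to step $n+1$ while preserving the marginal laws of $\Sg$ under $\Pg_{s_0}$ and of $\Sd$ under $\P_{s_0}$, and yields $\Sg_{n+1}-\Sg_n \geqslant \Sd_{n+1}-\Sd_n$ for every $n$, which is the asserted stochastic domination of the sequences of increments.
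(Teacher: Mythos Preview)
Your proof is correct and is essentially a fully detailed version of what the paper sketches: the paper simply says the result ``follows directly from the explicit formula~\eqref{lawmcpeeling} for the transition kernel of $\Sg$ under $\Pg$ and from the fact that $h$ is non-decreasing'', and you have unpacked precisely this --- the monotone likelihood ratio from the monotonicity of $h$ gives the one-step domination, and the standard quantile/Markovian coupling propagates it to the whole sequence. There is no meaningful difference in approach.
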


\begin{proof} This result is well known. It follows directly from the explicit formula~\eqref{lawmcpeeling} for the transition kernel of $\Sg$ under $\Pg$ and from the fact that $h$ is non-decreasing.
\end{proof}

The next lemma is rather intuitive. It states, informally, that while peeling a red cluster, we should expect to see more red vertices than blue ones on the boundary. 

\begin{lem}\label{OnionLemma} Consider the peeling process starting from an initial outer boundary with $r_0$ red vertices and $b_0$ blue vertices such that $r_0 \geqslant b_0$. Fix $k \geqslant n\geqslant 0$. Then, under $\Pg_{(b_0,r_0)}$, conditionally on $\Sg$ and $\{\thetag > n\}$, the random variable $\Rg_k$ stochastically dominates $\Bg_k$.
\end{lem}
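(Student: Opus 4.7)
My plan is to prove Lemma~\ref{OnionLemma} by combining two structural properties of the dynamics~\eqref{defRBwithS}: a monotonicity coupling in the initial condition, and the color-swap symmetry induced by the involution $\eta \leftrightarrow 1-\eta$.

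The first ingredient is \textbf{monotonicity}. Working conditionally on $\Sg$ (so that the increments $X_{i+1} = \Sg_{i+1}-\Sg_i$ are frozen) and viewing $(\Rg,\Bg)$ as a deterministic function of $(r_0,b_0)$ and $\eta$ via~\eqref{defRBwithS}, I would couple two copies of the process starting from $(r_0,b_0)$ and $(r_0',b_0')$ with $r_0\geq r_0'$, $b_0\leq b_0'$ and $r_0+b_0=r_0'+b_0'$. A straightforward induction on $k$, splitting into the four cases determined by $\mathrm{sgn}(X_{k+1})$ and the value of $\eta_{k+1}$, should show that $\Rg_k^{(r_0,b_0)}\geq \Rg_k^{(r_0',b_0')}$ and $\Bg_k^{(r_0,b_0)}\leq \Bg_k^{(r_0',b_0')}$ pathwise; the key fact is that the reflection map $f$ of~\eqref{deffreflection} preserves the coordinatewise order on each fiber $\{(r,b):r+b=s\}$.

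The second ingredient is \textbf{color symmetry}. Since $\eta_i\stackrel{d}{=}1-\eta_i$, the law of the sequence $(\eta_i)$ is preserved by the involution $\eta\leftrightarrow 1-\eta$, and by the symmetry $f(b,a)=\mathrm{swap}(f(a,b))$ one checks inductively that this involution, combined with the swap $(r_0,b_0)\leftrightarrow(b_0,r_0)$, exchanges the two coordinates of the trajectory. Consequently, the joint law of $(\Rg,\Bg)$ under $\Pg_{(r_0,b_0)}$ equals the joint law of $(\Bg,\Rg)$ under $\Pg_{(b_0,r_0)}$.

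Combining the two ingredients yields the \textbf{base case} $n=0$ (no conditioning) at once: in the coupling with initial conditions $(r_0,b_0)$ and $(b_0,r_0)$ and common $(\Sg,\eta)$, monotonicity gives $\Rg_k^{(r_0,b_0)}\geq\Rg_k^{(b_0,r_0)}$ pointwise, while symmetry identifies the marginal law of $\Rg_k^{(b_0,r_0)}$ with that of $\Bg_k^{(r_0,b_0)}$. Hence $\Rg_k\geq_{\mathrm{st}}\Bg_k$ under $\Pg_{(r_0,b_0)}$ for every $k$.

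The \textbf{hard part} is incorporating the conditioning $\{\thetag>n\}$ for $n\geq 1$. My plan is to proceed by induction on $n$ via the Markov property at time $1$, decomposing the probability $\Pg_{(r_0,b_0)}(\Rg_k>t\mid\Sg,\thetag>n+1)$ according to the state $(\Rg_1,\Bg_1)$ and the shifted event $\{\thetag'>n\}$. The subtle point is that a case analysis of the transition at step $1$ shows that, even when $r_0>b_0$, the conditional law of $(\Rg_1,\Bg_1)$ given $\thetag>1$ is not supported on $\{r\geq b\}$: a large downward jump $\Sg_1-\Sg_0\leq -1$ absorbed by the red side (i.e., $\eta_1=1$) can produce $\Rg_1<\Bg_1$ with $\Rg_1>0$, so a naive application of the inductive hypothesis fails. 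To overcome this obstacle I would pair each ``inverted'' transition leading to $\Rg_1<\Bg_1$ with its mirror image under the color swap of the second ingredient; the pathwise monotonicity of the first ingredient then shows that the survival weights of the two paired subcases compensate exactly enough to restore the desired stochastic dominance after integration. Controlling this pairing, which is reminiscent of a two-dimensional reflection principle adapted to the dynamics~\eqref{defRBwithS}, is the principal technical obstacle that I expect.
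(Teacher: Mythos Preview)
Your monotonicity coupling and color-swap symmetry are both correct, and together they do give a clean proof of the case $n=0$ (no conditioning) for all $k\geq 0$. The gap is in the inductive step. When you try to pass from $n$ to $n+1$ via the Markov property at time $1$, you lose the hypothesis $r_0\geq b_0$: after one surviving step the state $(\Rg_1,\Bg_1)$ can be inverted, as you yourself observe, so the induction hypothesis does not apply to the restarted process. Your proposed ``pairing'' remedy is not actually specified; note that the color-swap involution only exchanges $\Rg$ and $\Bg$ when you \emph{also} swap the initial condition $(r_0,b_0)\leftrightarrow(b_0,r_0)$, so inside a fixed-initial-condition computation there is no natural mirror image of an inverted transition to pair with. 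As stated, the argument for $n\geq 1$ is incomplete.

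The paper avoids this difficulty by a different, and much shorter, induction. It works with the \emph{unnormalised} joint probabilities
\[
x\longmapsto \Pg\big(\Rg_n\geq x,\ \thetag>n\,\big|\,\Sg\big)
\qquad\text{and}\qquad
x\longmapsto \Pg\big(\Bg_n\geq x,\ \thetag>n\,\big|\,\Sg\big)
\]
rather than with the conditional law given $\{\thetag>n\}$, and passes from $n$ to $n+1$ by decomposing on the \emph{last} step, i.e.\ on $\eta_{n+1}$ and $X_{n+1}=\Sg_{n+1}-\Sg_n$. A short case analysis (using only that $\eta_{n+1}$ is Bernoulli$(1/2)$ independent of $\Sg$, and that $\Rg_{n+1}=\min(\Rg_n,\Sg_{n+1})$ on a blue step while $\Rg_{n+1}=\Rg_n+X_{n+1}$ on a surviving red step) shows that each of the two probabilities at time $n+1$ is the \emph{same} monotone combination of the corresponding probability at time $n$; the inductive hypothesis then transfers term by term. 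No coupling, no pairing, and crucially no reference to the state at time $1$ is needed. The extension from $k=n$ to $k>n$ is a second, easier induction on $k$ of the same flavour. If you want to salvage your approach, the simplest fix is precisely this: keep the initial condition fixed and carry the inequality of \emph{tail functions} through one step, instead of trying to restart the process from a potentially inverted state.
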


\begin{proof}
We first prove the result when $k=n$, by induction on $n$. We must show that
\[
\forall x \geqslant 0\qquad\Pg\Big(\Rg_n \geqslant x,\thetag>n\,\Big|\,\Sg\Big)\geqslant
\Pg\Big(\Bg_n\geqslant x,\thetag>n\,\Big|\,\Sg\Big)
\]
(in this proof, the processes always start from $r_0$ and $b_0$ so we just write $\Pg$ instead for $\Pg_{(r_0,b_0)}$). The result holds for $n=0$ since $r_0 \geqslant b_0$. To prove the induction step, we decompose
\[
\{\thetag>n+1\}=\bigg\{\begin{array}{c}
\thetag>n,\,\eta_{n+1}=1,\\
\Rg_n+(\Sg_{n+1}-\Sg_n) > 0
\end{array}\bigg\}\,\bigcup\,
\Big\{\thetag>n,\,\eta_{n+1}=0 \Big\}.
\]
Therefore, we can write
\begin{align*}
\Pg\Big(\!\Rg_{n+1}\geqslant x,\,\thetag>n+1\,\Big|\,\Sg\!\Big) &= \Pg\Big(\!\Rg_n\!+\!(\Sg_{n+1}-\Sg_n)\geqslant x,\,\thetag>n,\,\eta_{n+1}=1\,\Big|\,\Sg\!\Big)\\
&\qquad + \Pg\Big(\Rg_n\geqslant x,\,\thetag>n,\,\eta_{n+1}=0\,\Big|\,\Sg\Big)\ind{\Sg_{n+1} \geqslant x}\\
&=\frac{1}{2}\Pg\Big(\Rg_n+(\Sg_{n+1}-\Sg_n) \geqslant x,\,\thetag>n\,\Big|\,\Sg\Big)\\
&\qquad +\frac{1}{2}\Pg\Big(\Rg_n \geqslant x,\,\thetag>n\,\Big|\,\Sg\Big)\ind{\Sg_{n+1} \geqslant x},
\end{align*}
where we used the fact that the sequence $(\eta_n)_{n\geqslant 1}$ is Bernoulli of parameter $1/2$ and independent of $\Sg$ for the last equality. Similarly, we can also write
\begin{align*}
&\Pg\Big(\Bg_{n+1}\geqslant x,\,\thetag>n+1\,\Big|\,\Sg\Big)\\
&= \Pg\Big(\Bg_n+(\Sg_{n+1}-\Sg_n)\geqslant x,\,\thetag>n,\,\eta_{n+1}=0\,\Big|\,\Sg\Big)\\
&\qquad + \Pg\Big(\Bg_n \geqslant x,\, \Rg_n + (\Sg_{n+1}-\Sg_n)>0 ,\,\thetag>n,\,\eta_{n+1}=1\,\Big|\,\Sg\Big)\ind{\Sg_{n+1} \geqslant x}\\
&\leqslant \frac{1}{2}\!\left(\Pg\Big(\Bg_n+(\Sg_{n+1}-\Sg_n) \geqslant x,\,\thetag>n\Big) \!+\! \Pg\Big(\Bg_n \geqslant x,\,\thetag>n\,\Big|\,\Sg\Big)\ind{\Sg_{n+1} \geqslant x}\!\right).
\end{align*}
Using the induction hypothesis, we conclude that, as required, 
\begin{align*}
\Pg\Big(\Bg_{n+1}\geqslant x,\,\thetag>n+1\,\Big|\,\Sg\Big) &\leqslant \frac{1}{2}\,\Pg\Big(\Rg_n+(\Sg_{n+1}-\Sg_n)\geqslant x,\,\thetag>n\,\Big|\,\Sg\Big)\\
& \qquad + \frac{1}{2}\,\Pg\Big(\Rg_n \geqslant x,\,\thetag>n\,\Big|\,\Sg\Big)\ind{\Sg_{n+1} \geqslant x}\\
&=\Pg\Big(\Rg_{n+1}\geqslant x,\,\thetag>n+1\,\Big|\,\Sg\Big).
\end{align*}
We now prove that the result still holds for any $k\geqslant n$, for $n$ fixed, by induction on $k$. Indeed, suppose that
\begin{equation}\label{onionInduc2}
\forall x\geqslant 0\qquad \Pg\Big(\Rg_{k}\geqslant x,\,\thetag>n\,\Big|\,\Sg\Big) \geqslant \Pg\Big(\Bg_{k}\geqslant x,\,\thetag>n\,\Big|\,\Sg\Big).
\end{equation}
Using similar argument as before, we can write
\begin{multline*}
\Pg\Big(\Rg_{k+1}\geqslant x,\,\thetag>n\,\Big|\,\Sg\Big) = \frac{1}{2}\Pg\Big(\Rg_{k}\geqslant x,\,\thetag>n \,\Big|\,\Sg\Big) \\
+ \frac{1}{2}\Pg\Big(\Rg_{k}  \geqslant x - (\Sg_{n+1} -\Sg_n),\,\thetag>n\, \Big|\,\Sg\Big)
\end{multline*}
but also 
\begin{multline*}
\Pg\Big(\Bg_{k+1}\geqslant x,\,\thetag>n\,\Big|\,\Sg\Big) = \frac{1}{2}\Pg\Big(\Bg_{k}\geqslant x,\,\thetag>n \,\Big|\,\Sg\Big) \\
+  \frac{1}{2}\Pg\Big(\Bg_{k} \geqslant x - (\Sg_{n+1} -\Sg_n),\,\thetag>n\, \Big|\,\Sg\Big)
\end{multline*}
from which we deduce that the~\eqref{onionInduc2} also holds for $k+1$. 
\end{proof}

\subsection{Proof of proposition~\ref{PropPerimeter}}

We first note that, during the exploration of the red cluster of the origin, each step of the peeling process discovers exactly one new edge that connects a blue vertex to a red vertex, except for the special times when the entire border of the unexplored region is red. Let $\Deltag$ be the last time before $\thetag$ such that this happens, \ie such that $\Bg_{\Deltag} =0$. At this step, the entire border is red and, at the next step, we discover a blue vertex which necessarily belongs to $\Hl^c$ (otherwise it would be absorbed by the cluster at a latter time and it would be a time where the whole boundary is red). Therefore the peeling at step $\Deltag+1$ creates two edges counted in $\D$. Thus, from step $\Deltag+1$, we continue to peel the cluster, using the usual condition that we always choose the one edge which goes from blue to red when rotating around the boundary counter-clockwise. After the time $\Deltag$, there are always both blue and red vertices on the boundary so that, at each step, we discover one new edge in $\D$ until time $\thetag$. Finally, at the last step $\thetag$, the whole boundary become blue and we must add to $\D$ an additional number $\Eg$ of edges that belong to the free Boltzmann triangulation discovered at this step, see figure~\ref{fig:Perimeter}. As a consequence, we obtain
\begin{equation}\label{eqPerim}
|\D|=1+ \thetag - \Deltag + \Eg
\end{equation}
(to make this equality rigorous, we use the convention $\Deltag = 1$ if we never encounter a fully red boundary during the peeling of the red cluster).

\begin{figure}[ht]
\centering
\newcommand{\dgreen}{green!60!black}
\begin{tikzpicture}[scale=0.28]
%
\fill[color=gray!50] (12,12)--(10,11)--(7,11)--(7.5,8.5)--(10,6)--(12,8)--(12,12);
\fill[color=gray!50] (8,14)--(4,12)--(5,17)--(9,20)--(13,22)--(16,21)--(17,16)--(13,16)--(9,17)--(8,14);
\fill[color=gray!50] (12,12)--(15,9)--(14,6)--(17,5)--(20,4)--(18,9)--(17,13.5)--(12,14)--(12,12);
\fill[color=gray!50] (19,13)--(20,16)--(17,16)--cycle;
\fill[color=gray!50] (27,10)--(24,8.5)--(23,11)--cycle;
%
\draw (12,12)--(15,9)--(17,5)--(14,6)--(15,9);
\draw (14,6)--(11,4)--(8,4)--(5,7)--(4,12)--(8,14)--(12,12);
\draw (8,14)--(9,17)--(4,12)--(5,17)--(9,17)--(9,20)--(5,17);
\draw (9,17) --(13,16)--(17,16)--(19,13)--(20,16)--(17,16) --(16,21)--(13,22)--(9,20);
\draw (20,9)--(19,13)--(20,16)--(23,15);
%
\draw (12,8)--(14,6);\draw (12,8)--(15,9);\draw (12,8)--(11,4);\draw (12,8)--(10,6);\draw (12,8)--(10,8);\draw (12,8)--(10,11);\draw (12,8)--(10,8);\draw (10,11)--(12,12);\draw (10,11)--(8,14);\draw (10,11)--(7,11);\draw (10,11)--(7.5,8.5);\draw (10,11)--(10,8);\draw (10,8)--(10,6);\draw (10,8)--(7.5,8.5);
\draw (7,11)--(8,14);\draw (7,11)--(4,12);\draw (7,11)--(5,7);\draw (7,11)--(7.5,8.5);\draw (7.5,8.5)--(5,7);\draw (7.5,8.5)--(10,6);\draw (7.5,8.5)--(8,4);\draw (10,6)--(8,4);\draw (10,6)--(11,4);\draw (11.5,19)--(13,22);\draw (11.5,19)--(9,20);\draw (11.5,19)--(9,17);\draw (11.5,19)--(13,16);\draw (15,18)--(13,16);\draw (15,18)--(17,16);\draw (15,18)--(16,21);\draw (15.5,12.5)--(12,14);\draw (15.5,12.5)--(16,10.5);\draw (15.5,12.5)--(17,13.5);\draw (16,10.5)--(17,13.5);\draw (11.5,19)to[out=20,in=180](13.5,19.5) to[out=0,in=80] (15,18);\draw (15,18)to[bend left](11.5,19);\draw(11.5,19)--(16,21);\draw (11.5,19)--(13.5,18.75)--(15,18);
%
\draw (26,14)--(24,18)--(21,20)--(19,21)--(17,25) --(13.5,25.5)--(10,24)--(7,23)--(4,20)--(0,18)--(2,15)--(1,11)--(3,9)--(2,6)--(6,4)--(8,2)--(12,2)--(14,0) --(17,1)--(20,4)--(23,6)--(27,10)--(23.5,12.75)--cycle;
\draw (20,9) to[out=60,in=-170] (22.75,11.75)to[out=10,in=145](27,10);\draw (20,9)to[out=-45,in=-160] (24,7.75) to[out=20,in=-140] (27,10);\draw(20,9)--(23,11)--(27,10)--(24,8.5)--(20,9);\draw(24,8.5)--(23,11);\draw (20,9)to[out=60,in=-170](23.5,12.75);
%
\draw(26,14)--(23,15)--(24,18)--(20,16)--(21,20)--(17,16)--(19,21)--(16,21)
--(17,25)--(13,22)--(13.5,25.5)--(10,24)--(13,22)--(7,23)--(9,20)--(4,20);
\draw (5,17)--(0,18);\draw (5,17)--(4,20);\draw (5,17)--(2,15);\draw (4,12)--(2,15);\draw (4,12)--(1,11);\draw (4,12)--(3,9);\draw (5,7)--(3,9);\draw (5,7)--(2,6);\draw (5,7)--(6,4); \draw (6,4)--(8,4)--(8,2)--(11,4)-- (12,2)--(14,6)--(14,0)--(17,5); \draw (17,5)--(17,1);\draw (17,5)--(20,4)--(18,9);\draw (20,4)--(20,9);\draw (17,5)--(18,9);\draw (12,14)--(9,17);\draw (12,14)--(8,14);\draw (16,10.5)--(18,9);\draw (16,10.5)--(15,9);\draw (16,10.5)--(12,14);\draw (12,14)--(12,12);\draw (12,14)--(15,9);\draw (12,14)--(17,13.5);\draw (12,14)--(17,16);\draw (12,14)--(13,16);\draw (17,13.5)--(17,16);\draw (17,13.5)--(19,13);\draw (17,13.5)--(18,9);\draw (17,13.5)--(20,9);\draw (18,9)--(15,9);\draw (18,9)--(20,9);\draw (20,9)--(23,6);\draw (23.5,12.75)--(19,13);\draw (23.5,12.75)--(20,16);\draw (23.5,12.75)--(23,15);\draw (23.5,12.75)--(26,14);\draw (0,18) to[bend right] (5,17) to[bend right] (0,18);
%
\draw[color=violet,very thick] (17,5)--(18,9)--(15,9)--(16,10.5);\draw[color=violet,very thick] (15,9)--(12,14);
%
\foreach \i in {(4,12),(5,7),(5,17),(8,4),(8,14),(9,17),(9,20),(10,8),(11,4),(12,12),(13,16),(13,22),(14,6),(15,9),(17,5),(16,21),(17,16),(19,13),(20,9),(20,16),(23,15),(15.5,12.5),(13.5,18.75)}{\draw \i node[color=red!10] {$\bullet$} node[color=red] {$\circ$};}
\foreach \i in {(0,18),(1,11),(2,6),(2,15),(3,9),(4,20),(6,4),(7,11),(7,23),(8,2),(7.5,8.5),(10,6),(10,11),(10,24),(12,2),(12,8),(11.5,19),(12,14),(14,0),(13.5,25.5),(15,18),(17,13.5),(17,1),(18,9),(17,25),(19,21),(20,4),(21,20),(23.5,12.75),(23,6),(27,10),(24,18),(26,14),(16,10.5),(2.5,17.5),(23,11),(24,8.5)} {\draw \i node[color=blue] {$\bullet$} node[color=blue] {$\circ$};}
%
\draw[color=\dgreen,thick,dashed] (12,10) to[out=0,in=0] (10.5,5);\draw[color=\dgreen,fleche=0.5:\dgreen,thick,dashed] (10.5,5) to[out=180,in=-100] (5.5,11.5);
\draw[color=\dgreen,thick,dashed] (5.5,11.5) to[out=80,in=180] (9,12.5) to[out=0,in=-120] (10,13);
\draw[color=brown,thick,dashed](10,13) to[out=60,in=-160] (12.5,15);
\draw[color=brown,fleche=0.5:brown,thick,dashed] (12.5,15) to[out=20,in=140] (17,14.75);\draw[color=brown,thick,dashed](16.5,14.75) to[out=-20,in=90] (18.5,9) to[out=-90,in=-135] (21.5,7.5)to[out=45,in=-45] (22,11) to[out=135,in=-170] (23.5,14); \draw[color=brown,fleche=0.35:brown,thick,dashed] (23.5,14) to[out=10,in=-30] (23.5,16.5) to[out=150,in=-60] (16.5,23);\draw[color=brown,fleche=0.75:brown,thick,dashed] (16.5,23) to[out=120,in=45] (4.5,18.5) ; \draw[color=brown,fleche=0.45:brown,thick,dashed] (4.5,18.5) to[out=-135,in=90] (3,13) to[out=-90,in=120] (3.5,10.5) to[out=-60,in=90] (3.5,6.5) to[out=-90,in=-180] (5.5,5.5) to[out=0,in=150] (8,3) to[out=-30,in=-135]  (13,4);\draw[color=brown,fleche=0.99:brown,thick,dashed] (13,4) to[out=45,in=135] (18.5,2.5);
%
\draw[fleche=0.6:black] (12,12)--(12,8);
\end{tikzpicture}
\caption{The exploration of the percolation interface of the example in figure~\ref{fig:HullPerco} until time $\thetag$. The interface until time $\Deltag$ is in green. The interface during time $\thetag-\Deltag$ is in brown. We add $\Eg$ edges (in violet) that belong to the free Boltzmann triangulation discovered at step $\thetag$. \textit{Here, $\Deltag=12$, $\thetag=64$ and $\Eg=4$}.}
\label{fig:Perimeter}
\end{figure}
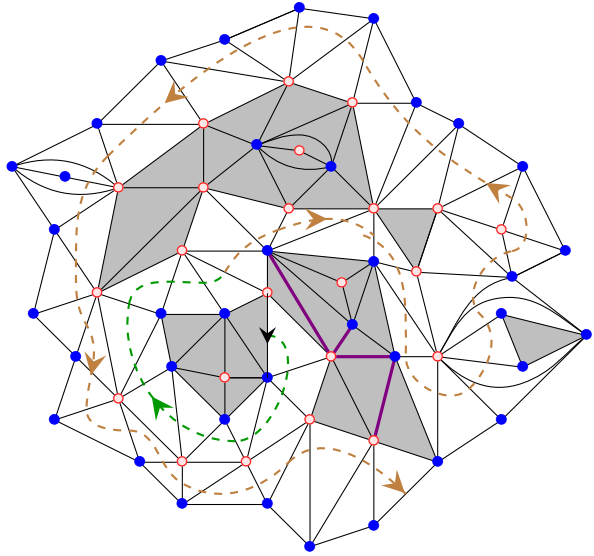

\subsubsection{Proof of the upper bound}

Evaluating the tail distribution of the number $\Eg$ of edges would be burdensome. Instead we rely on a clever symmetry trick we learned from Angel and Curien~\cite{AC15}. Observe that, after time $\Deltag$, we could have chosen the other convention for the peeling process (\ie choosing at each step the edge going from blue to red when rotating clockwise around the boundary). This would have yielded another peeling time $\smash{\widetilde{\thetag}}$ and another number of edges added at the last step $\smash{\widetilde{\Eg}}$. Yet, the complete symmetry of these procedures (we know that after this time peeling to the right or to the left we will never make the blue vertices disappear) implies that $\thetag$ and $\smash{\widetilde{\thetag}}$ have the same law. Moreover these two peeling strategies completely surround the cluster which, in turn, implies that 
\begin{equation}
|\D|\leqslant 1+ \thetag - \Deltag + 1+ \widetilde{\thetag} - \Deltag  \leqslant 2 + \thetag + \widetilde{\thetag}.
\end{equation}
The upper bound of the proposition directly follows from a crude union bound on the right hand side of the inequality above.

\subsubsection{Proof of the lower bound}

The lower bound is more delicate and its proof is decomposed in several steps. First, we can assume without loss of generality that the initial boundary condition is $r_0 = b_0 = 1$ since this configuration can be reached with positive probability while peeling the red cluster after a finite number of step. Thus the Markov property of the peeling process shows that changing the initial condition affects only the multiplicative constant $c$ in the lower bound of~\eqref{boundperimeter}. Now, let us construct an event on which $|\D|$ is larger than $n$ and which has the required probability. Fix $a>0$ (which we will chosen sufficiently large later on) and define
\begin{equation*}
\mathcal{E} \defeq \mathcal{E}_1 \cap \mathcal{E}_2 \cap \mathcal{E}_3
\end{equation*}
with
\begin{align*}
\mathcal{E}_{1} &\defeq  \big\{ \thetag > n,\, \Rg_n \geqslant 5 a n^{2/3} \big\}, \\
\mathcal{E}_{2} &\defeq  \big\{ \hbox{the first time $\tau \geqslant n$ such that $\Bg_\tau \geqslant \Rg_\tau$ also satisfies $\Rg_\tau\geqslant a n^{2/3}$}\big\}, \\
\mathcal{E}_{3} &\defeq  \big\{ \hbox{After time $\tau$, $\Rg$ hits $0$ before $\Bg$ but not before time $\tau + n$} \big\}.
\end{align*}
On the event $\mathcal{E}$, we have $\thetag \geqslant \tau + n$. On the other hand, the last time $\Deltag$ when the outer boundary is fully red must occur prior to time $\tau$. Therefore, on this event, we have $\thetag - \Deltag \geqslant n$. In view of~\eqref{eqPerim}, this means that
\begin{equation*}
\Pg(\mathcal{E}) \leqslant \Pg(|\D| > n).
\end{equation*} 
We lower bound the probability of each event composing $\mathcal{E}$ separately.
\medskip

\textbf{\textit{Event $\mathcal{E}_1$.}} Denote $\eta=(\eta_i)_{i\geqslant 1}$. Recalling the notation~\eqref{defcondiinfini}, we can write
\begin{align*}
\Pg\big(\thetag > n,\, \Sg_n \geqslant 10 a n^{2/3} \big) &=\P\big(\thetag > n,\, \Sd_n \geqslant 10 a n^{2/3} \,\big|\, \Sd \geqslant 2 \big)\\
& = \E\Big[ \P\big(\thetag > n,\, \Sd_n \geqslant 10 a n^{2/3} \,\big|\, \Sd \geqslant 2 ,\,\eta \big) \Big]
\end{align*}
Conditionally on $\eta$, the events $\{\Sd_n \geqslant 10 a n^{2/3}\}$ and $\{\thetag >n\}$ are increasing with respect to the increments of the random walk $\Sd$. Indeed, larger increments  always mean larger values for $\Rd$ and $\Bd$ (even on the event that one of them become negative) and thus $\Rd$ is less likely to vanish. Thus, according to Harris inequality recalled in Proposition~\ref{HarrisProp} of the appendix (more precisely  formulation \eqref{harriseqcondi}), we have
\begin{align*}
\Pg\big(\thetag > n,\, \Sg_n \geqslant 10 a n^{2/3} \big) &\geqslant  \E\Big[ \P\big(\thetag > n\, \big|\, \Sd \geqslant 2 ,\,\eta \big) 
\,\P\big(\Sd_n \geqslant 10 a n^{2/3}\, \big|\,\eta\big)\Big]\\
&= \Pg\big(\thetag > n \big) \,\P\big(\Sd_n \geqslant 10 a n^{2/3}\big),
\end{align*}
where we used that $\Sd$ does not depend on $\eta$ for the last equality. The random walk $\Sd$ is centered and lies in the domain of normal attraction of a spectrally negative stable law of index $3/2$. Therefore, the probability $\P(\Sd_n \geqslant 10 a n^{2/3})$ converges to a strictly positive constant (depending only on $a$) when $n$ goes to infinity. On the other hand, since we can assume that we start with as many red as blue vertices on the initial boundary, Lemma~\ref{OnionLemma} shows that
\[
\Pg\big(\mathcal{E}_1 \big) = \Pg\big(\thetag > n,\, \Rg_n \geqslant  5 a n^{2/3} \big) \geqslant \frac{1}{2}\,\Pg\big(\thetag > n,\, \Sg_n \geqslant 10 a n^{2/3} \big).
\]
Putting these facts together yields the lower bound: there exists some constant $c>0$ (depending only on $a$) such that, for $n$ large,
\begin{equation}\label{boundE1}
\Pg\big(\mathcal{E}_1 \big) \geqslant c\, \Pg\big(\thetag > n \big).
\end{equation}

\textbf{\textit{Event $\mathcal{E}_2$.}} Let us now assume that the peeling process starts from some initial boundary condition $(r_0,b_0)$ with $r_0 \geqslant 5 a n^{2/3}$ and $b_0 \geqslant 0$. Define the stopping time $\tau \defeq \inf\,\{\,k\geqslant 0: \Bg_k \geqslant \Rg_k \,\}$ and observe that, necessarily, $\tau \leqslant \thetag$. We will prove that, for $n$ large, 
\begin{equation}\label{toproveE2}
\Pg_{(r_0,b_0)}\big( \Rg_\tau \geqslant a n^{2/3}\big) \geqslant c
\end{equation}
where $c>0$  does not depend on $b_0$ and $r_0 \geqslant 5 a n^{2/3}$. In view of the Markov property of the peeling process, this will entail that
\begin{equation}\label{boundE2}
\Pg\big(\mathcal{E}_2 \,\big|\, \mathcal{E}_1 \big) \geqslant c. 
\end{equation}

Set $m = \lfloor a n^{2/3} \rfloor$. We first notice that, on the event $A\defeq \{\tau>k,\, \Bg_k \geq 2m  \}$ we have $\Rg_k> 2m$. Thus, on this event, if $\Bg$ jumps at time $k+1$ then we have $\Rg_{k+1} = \Rg_{k} > 2m$ or $\tau > k+1$. This means that, on the event $A$, the event $\{\tau=k+1,\, \Rg_{k+1} \leq m\}$ happens if and only if $\Rg$ makes a downward jump such that $\Sg_{k+1} = \Rg_{k+1}+\Bg_{k+1} \leq m + \Bg_k$. Thus, we can write
\begin{align*}
&\Pg(\Rg_{\tau}\leq m,\,\tau=k+1\,|\,\Rg_k,\,\Bg_k,\,\tau>k)\ind{\tau>k,\,\Bg_k \geq 2m}\\
&=\Pg( \Rg_{k+1} \neq \Rg_{k},\, \Sg_{k+1} \leq \Bg_k + m)\ind{\tau>k,\,\Bg_k\geq 2m}\\
&=\frac{1}{2}\left(\sum_{y = 2}^{\Bg_k + m} p_{\Sg_k,y}\right)\ind{\tau>k,\,\Bg_k\geq 2m}
\end{align*}
where $(p_{x,y})_{x,y\geqslant 2}$ is the transition kernel of $\Sg$ under $\Pg$ given by formula~\eqref{lawmcpeeling}. Using similar arguments, we also find that
\begin{align*}
&\Pg(\Rg_{\tau} > m,\,\tau=k+1\,|\,\Rg_k,\,\Bg_k,\,\tau>k)\ind{\tau>k,\,\Bg_k \geq 2m}\\
&\geqslant \Pg( \Rg_{k+1} \neq \Rg_{k},\,2\Bg_k \geq \Sg_{k+1} > \Bg_k + m)\ind{\tau>k,\,\Bg_k \geq 2m}\\
&=\frac{1}{2}\left(\sum_{y=\Bg_k+m+1}^{2 \Bg_k}p_{\Sg_k,y}\right)\ind{\tau>k,\,\Bg_k \geq 2m}
\end{align*}
(the first line is not an equality since it is also possible for $\tau$ to occur while $B$ makes an  upward step). It is easy to check from \eqref{lawmcpeeling} that, for any $x>2$, the function $y \longmapsto p_{x,y}$  is non-decreasing for $y \in  \llbracket 2,x-1  \rrbracket$. As a consequence, if $\Rg_k > \Bg_k \geq 2m$, we have
\[
\sum_{y=2}^{\Bg_k+m }p_{\Sg_k,y}
\leqslant 3\sum_{y=\Bg_k + m +1}^{2\Bg_k} p_{\Sg_k,y}\]
since all the terms in the sum on the right hand side are bigger than those in the sum on the left hand side. and there are at most $3$ times as many terms on the left sum than on the right one. Combining this with the previous estimates and summing over $k$ yields
\[ \Pg_{(r_0,b_0)}\big(\Rg_\tau \leq m ,  \Bg_{\tau-1} \geq 2 m\big)
\leqslant 3\Pg_{(r_0,b_0)}\big(\Rg_\tau > m ,  \Bg_{\tau-1} \geq 2 m\big)\] and thus
\begin{equation}\label{event2eq1}
\Pg_{(r_0,b_0)}\big(\Rg_\tau> m  \big) \geqslant \frac{1}{4}\Pg_{(r_0,b_0)}\big( \Bg_{\tau-1} \geq 2 m\big)\geqslant \frac{1}{4}\Pg_{(r_0,b_0)}\big( \Bg_{\tau} > 2m\big)
\end{equation}
where, for the last inequality, we use the fact that $\Bg$ is right-continuous hence $\Bg_{\tau}\leq \Bg_{\tau-1}+1$. 
By definition, at time $\tau$, the blue vertices represent at least half of the total vertices on the boundary so that $\{ \Sg_\tau > 4 m \}\subset \{ \Bg_\tau > 2 m \}$. In view of~\eqref{event2eq1}, this gives the lower bound
\begin{align*}
\Pg_{(r_0,b_0)}\big(\Rg_\tau > a n^{2/3} \big)  = \Pg_{(r_0,b_0)}\big(\Rg_\tau > m \big) 
&\geqslant \frac{1}{4} \Pg_{s_0 = r_0 + b_0}\big( \Sg_k > 4 m \hbox{ for all $k$} \big)\\
&=  \frac{1}{4} \P_{s_0}\big( \gamma = +\infty \,\big|\, \Sd \geqslant 2\big),
\end{align*}
where $\gamma \defeq \min(k\geqslant 0,\, \Sd_k \leq 4 m)$. It remains to lower bound the right hand side of the previous equation uniformly in $s_0\geqslant  5 a n^{2/3} \geq 5 m$. Since $\Sd$ conditioned not to go below height $2$ is obtained by a h-transform and the function $h$ given by~\eqref{defhtransform} is increasing and regularly varying with index $1/2$, we get that
\begin{multline}\label{transienceMC}
\P_{s_0}\big( \gamma < +\infty \,\big|\, \Sd \geqslant 2\big) =\E_{s_0}\left[ \ind{\gamma < +\infty, \Sd_\gamma \geqslant 2} \frac{h(\Sd_\gamma - 1)}{h(s_0 - 1)}\right]\\
\leqslant \,\frac{h(4 m - 1)}{h(5 m - 1)} \,\underset{n\to\infty}{\longrightarrow}\, \sqrt{\frac{4}{5}} \,<\, 1.
\end{multline}
Putting everything together, we conclude that~\eqref{toproveE2} holds.
\medskip

\textbf{\textit{Event $\mathcal{E}_3$.}} On the event $\mathcal{E}_1 \cap \mathcal{E}_2$, we have, by construction, $\Bg_\tau \geq \Rg_\tau \geqslant a n ^{2/3}$. We now show that, 
\begin{equation}\label{toproveE3}
\Pg_{(r_0,b_0)}\big( \hbox{$\Rg$ hits $0$ before $\Bg$ but not before time $n$} \big) > \frac{1}{4},
\end{equation}
provided that $n$ is large enough and $b_0 \geqslant r_0\geqslant an^{2/3}$. Then the Markov property of the peeling process, applied at the stopping time $\tau$, will imply that 
\begin{equation*}
\Pg\big(\mathcal{E}_3 \,\big|\, \mathcal{E}_1,\, \mathcal{E}_2 \big) > \frac{1}{4}.
\end{equation*}
This, combined with the previous bounds~\eqref{boundE1} and~\eqref{boundE2}, will complete the proof of the proposition. 
\medskip

Notice first that, by symmetry and since $r_0 \leqslant b_0$, the probability that $\Rg$ hits zero before $\Bg$ is at least $1/2$. Thus, we have
\begin{multline}\label{lastPerimProp1}
\Pg_{(r_0,b_0)}\big( \hbox{$\Rg$ hits $0$ before $\Bg$ but not before time $n$} \big)  \\
\begin{aligned}
& \geqslant \Pg_{(r_0,b_0)}\big( \hbox{$\Rg$ hits $0$ before $\Bg$} \big) - \Pg_{(r_0,b_0)}\big( \hbox{$\Rg$ or $\Bg$ hit $0$ before time $n$} \big)\\
& \geqslant \Pg_{(r_0,b_0)}\big( \hbox{neither $\Rg$ nor $\Bg$ hit $0$ before time $n$} \big) - \frac{1}{2}.
\end{aligned}
\end{multline}
Recall that we can construct $(\Rg,\Bg)$  via~\eqref{defRBwithS}. In particular, as long as neither $\Rg$ nor $\Bg$ hit zero, the function $f$ given by~\eqref{deffreflection} acts as the identity function. Therefore, if we define 
\begin{equation*}
\Rd_k \defeq r_0 + \sum_{i=1}^{k} \eta_i(\Sd_{i} - \Sd_{i-1})\qquad\mbox{and}\qquad \Bd_k \defeq b_0 + \sum_{i=1}^{k} (1-\eta_i)(\Sd_{i} - \Sd_{i-1})
\end{equation*}
for any $k\geqslant 1$, then the process $(\Rg,\Bg)$ and the random walk $(\Rd,\Bd)$ coincide up to the first time either $\Rd$ or $\Bd$ enters $(-\infty,0]$. This yields
\begin{multline}\label{lastPerimProp2}
\Pg_{(r_0,b_0)}\big( \hbox{$\Rg$ or $\Bg$ hits $0$ before time $n$} \big)\\
\begin{aligned}
&= \P_{(r_0,b_0)}\big( \hbox{$\Rd$ or $\Bd$ enters $(-\infty,0]$ before time $n$} \,\big|\, \Sd\geqslant 2\big)\\
&\leqslant 2\, \P_{(r_0,b_0)}\big(\hbox{$\Rd$ enters $(-\infty,0]$ before time $n$} \,\big|\, \Sd\geqslant 2\big)\\
&\leqslant 2\, \P_{r_0}\big(\hbox{$\Rd$ enters $(-\infty,0]$ before time $n$} \big)\\
&\leqslant 2\, \P_{r_0}\big(\hbox{$\Sd$ enters $(-\infty,0]$ before time $n$} \big),\\ 
 \end{aligned}
\end{multline}
where we used that $b_0 \geqslant r_0$ for the first inequality, Lemma~\ref{lemStocDomS} for the second inequality and the fact that $\Rd$ under $\P$ is a time-delayed version of $\Sd$ (because $\eta$ and $\Sd$ are independent) for the last inequality. Finally, in view of item \ref{StableRW-CV} of Proposition \ref{propStableRW} in the appendix, we find that
\begin{equation*}
\P_{r_0}\big(\hbox{$\Sd$ enters $(-\infty,0]$ before time $n$} \big) \leqslant \P_{0}\!\bigg(\!\inf_{i\leqslant n}\Sd_i < - a n^{2/3}\! \bigg) \underset{n\to\infty}{\longrightarrow}\! \P\big(\inf_{t\leqslant 1}\mathcal{X}_t < -a\big),
\end{equation*}
where $\mathcal{X}$ is a spectrally negative strictly stable process of index $3/2$. In particular, the limit above goes to $0$ as $a$ increases to infinity. Therefore we can find $a$ such that, for all $n$ large enough, 
\[\P_{r_0}\big(\hbox{$\Sd$  enters $(-\infty,0]$ before time $n$} \big) \leqslant 1/8.\]
Combining this inequality with~\eqref{lastPerimProp1} and~\eqref{lastPerimProp2}, we conclude that~\eqref{toproveE3} holds. This ends the proof of proposition~\ref{PropPerimeter}.

\section{From the peeling process to random walks}
\label{SectionSimplification}

In order to complete the proof of the results stated in the introduction, we must establish Theorem~\ref{theoExploTime} and Proposition~\ref{propVol}. This amounts to proving the following three tail estimates concerning the peeling process:
\begin{align}
\label{asympTheta}\Pg(\thetag \geqslant n) &\asymp n^{-1/6},\\
\label{asympVred}\Pg\big(\Vgr_{\thetag-1} \geqslant n\big) &\lesssim n^{-1/8},\\
\label{asympV}\Pg\big(\Vg_{\thetag} \geqslant n\big)&\gtrsim n^{-1/8},
\end{align}
where we use the notation $f \lesssim g$ (respectively $f\gtrsim g$) if $f$ and $g$ are two non-negative functions and if there exists a constant $c >0$ such that $f(x) \leqslant c \,g(x)$ (respectively $f(x)\geqslant c \,g(x)$) for all $x$ large enough.
\medskip

The goal of this section is to show that~\eqref{asympTheta}-\eqref{asympV} concerning the peeling quantities $\Rg,\Bg,\Theta,\Vg$ are equivalent to  similar estimates, but concerning processes that are more amenable to analysis, constructed from a pair of independent continuous-time random walks with one of them conditioned to stay non-negative. 

\subsection{Definitions}

We introduce here all the processes and notations that will be used throughout this section and the next one. We work on the probability space $(\Omega,\mathcal{F},\P)$ defined in Section~\ref{SectionPerimeter}. Recall that, under $\P$, the process $\Sd$ is a random walk, starting from~$s_0$, with increments distributed according to $(p_k)_{k\in \Z}$ given by~\eqref{defrwpk}. Recall also that the sequence $(\eta_i)_{i\geqslant 1}$ is i.i.d.~with Bernoulli distribution of parameter $1/2$ and is independent of $\Sd$. 
\medskip

We construct a two-dimensional random walk $(\Rd,\Bd)$ by the relation
\begin{equation*}
\left\{
\begin{array}{rcl}
(\Rd_{0},\Bd_{0})  &\defeq& (r_0,b_0),\\
(\Rd_{n+1},\Bd_{n+1})  &\defeq&  (\Rd_n,\Bd_n) + (\Sd_{n+1} - \Sd_{n})(\eta_{n+1},1-\eta_{n+1}),\quad n\in \N.
\end{array}
\right.
\end{equation*}
Again, we enforce $s_0 = r_0 + b_0$ so that $\Sd_n = \Rd_n + \Bd_n$ for any $n\in \N$. The counterpart of the peeling time $\thetag$ for $(\Rd,\Bd)$ is defined by
\begin{equation*}
\Thetad \defeq \inf\,\big\{\, n \in \N : \Rd_n + \min(\Bdu_n , 0) \leqslant 0\,\big\},
\end{equation*}
where $\Bdu_n \defeq \inf_{k\leqslant n}\Bd_k$ is the running infimum of $\Bd$. Notice that the process $(\Rd,\Bd)$ differs from the peeling process $(\Rg,\Bg)$, defined by~\eqref{defRBwithS}, only by the absence of the reflection function $f$ in the recurrence equation. In fact, they are essentially equivalent since we can reconstruct the peeling process $(\Rg,\Bg)$ up to the peeling time of the red cluster\footnote{in fact, we can recover it at all time but the formula becomes more complicated and we shall not need it anyway.} by reflecting the walks $(\Rd,\Bd)$ against the running infimum of $B$. Indeed, prior to the peeling time, only $\Bd$ hits new negative records, at which time we subtract the undershoot and add it to $\Rd$. In other words, it holds that, deterministically, 
\begin{equation*}
\big( \Rd_n+\min(\Bdu_n,0), \Bd_n-\min(\Bdu_n,0) \big)_{n < \Thetad}
=
\big( \Rg_n,\Bg_n \big)_{n < \Thetag}
\end{equation*}
and, in particular, $\Thetad = \Thetag$. See figure~\ref{fig:transRW} for an illustration of this transformation. 
\medskip

\begin{figure}[ht]
\centering
\begin{tabular}{c}
\begin{tikzpicture}[scale =0.15]
\foreach \k in {-4,-3,...,7} {\draw [color=gray!15,dashed] (-0.9,\k)-- (44,\k);}
\draw [>=stealth,->,color=gray!100] (0,-5) -- (0,8) ;
\draw [>=stealth,->,color=gray!100] (-1,0) -- (45.5,0) ;
\draw [dashed,color=green!60!black] (10,-4) -- (10,7) ;
\draw [dashed,color=green!60!black] (25,-4) -- (25,7) ;
\draw [dashed,color=green!60!black] (35,-4) -- (35,7) ;
\draw [dashed,color=green!60!black] (38,-4) -- (38,7) ;
\draw [thick,samples=1000,color=red] (0,1) -- (2,1) -- (2,2) -- (4,2) -- (4,1) -- (6,1) -- (6,2) -- (7,2) -- (7,3)-- (8,3) -- (8,4) -- (10,4) -- (10,2) -- (12,4-2) -- (12,5-2) -- (13,5-2) -- (13,6-2) -- (15,6-2) -- (15,7-2) -- (16,7-2) -- (16,5-2) -- (18,5-2) -- (18,6-2) -- (20,6-2) -- (20,7-2) -- (22,7-2) -- (22,8-2) -- (23,8-2) -- (23,9-2)--(25,9-2)--(25,9-5)
-- (28,9-5) -- (28,10-5) -- (29,10-5) -- (29,6-5) -- (30,6-5) --(30,7-5)--(31,7-5) -- (31,8-5) -- (33,8-5) -- (33,9-5) -- (34,9-5) -- (34,8-5) --(35,8-5)--(35,8-6)-- (36,8-6) -- (36,9-6)--(38,9-6)--(38,9-7)
-- (42,9-7) -- (42,10-7) -- (44,10-7)-- (44,6-6);
\draw [thick,samples=1000,color=blue] (0,-1) --  (1,-1) -- (1,-2) -- (3,-2) -- (3,-3) -- (5,-3) -- (5,-1) -- (9,-1) -- (9,-2) -- (10,-2) -- (10,0) 
-- (11,2-2) -- (11,1-2) -- (14,1-2) -- (14,0-2) -- (17,0-2) -- (17,-1-2) -- (19,-1-2) -- (19,-2-2) -- (21,-2-2) -- (21,2-2) -- (24,2-2) -- (24,1-2) -- (25,1-2) -- (25,5-5)
-- (26,5-5) -- (26,4-5) -- (27,4-5) -- (27,3-5) -- (32,3-5) -- (32,5-5) -- (35,5-5) -- (35,6-6)
-- (37,6-6) -- (37,5-6) -- (38,5-6) -- (38,7-7) -- (39,7-7) -- (39,6-7) -- (40,6-7) -- (40,5-7) -- (41,5-7) -- (41,4-7) -- (43,4-7) -- (43,3-7) --(44,3-7)--(44,3-6);
\draw (44,0) node {$\times$} ;
\draw (44,-0.3) node[below] {$\Thetag$} ;
\draw [thick,color=red] (13.5,7) node[below] {$\Rg$};
\draw [thick,color=blue] (14.5,-1.5) node[below] {$-\Bg$};
\end{tikzpicture}\\

\begin{tikzpicture}[scale =0.15]
\foreach \k in {-3,-2,...,10} {\draw [color=gray!15,dashed] (-0.9,\k)-- (44,\k);}
\draw [>=stealth,->,color=gray!100] (0,-4) -- (0,11) ;
\draw [>=stealth,->,color=gray!100] (-1,0) -- (45.5,0) ;
\draw [dashed,color=green!60!black] (10,-3) -- (10,10) ;
\draw [dashed,color=green!60!black] (25,-3) -- (25,10) ;
\draw [dashed,color=green!60!black] (35,-3) -- (35,10) ;
\draw [dashed,color=green!60!black] (38,-3) -- (38,10) ;
\draw [thick,samples=1000,color=red] (0,1) -- (2,1) -- (2,2) -- (4,2) -- (4,1) -- (6,1) -- (6,2) -- (7,2) -- (7,3)-- (8,3) -- (8,4) 
-- (12,4) -- (12,5) -- (13,5) -- (13,6) -- (15,6) -- (15,7) -- (16,7) -- (16,5) -- (18,5) -- (18,6) -- (20,6) -- (20,7) -- (22,7) -- (22,8) -- (23,8) -- (23,9)
-- (28,9) -- (28,10) -- (29,10) -- (29,6) -- (30,6) --(30,7)--(31,7) -- (31,8) -- (33,8) -- (33,9) -- (34,9) -- (34,8) 
-- (36,8) -- (36,9)
-- (42,9) -- (42,10) -- (44,10)-- (44,6);
\draw [thick,samples=1000,color=blue] (0,-1) --  (1,-1) -- (1,-2) -- (3,-2) -- (3,-3) -- (5,-3) -- (5,-1) -- (9,-1) -- (9,-2) -- (10,-2) -- (10,2) 
-- (11,2) -- (11,1) -- (14,1) -- (14,0) -- (17,0) -- (17,-1) -- (19,-1) -- (19,-2) -- (21,-2) -- (21,2) -- (24,2) -- (24,1) -- (25,1) -- (25,5)
-- (26,5) -- (26,4) -- (27,4) -- (27,3) -- (32,3) -- (32,5) -- (35,5) -- (35,6)
-- (37,6) -- (37,5) -- (38,5) -- (38,7) -- (39,7) -- (39,6) -- (40,6) -- (40,5) -- (41,5) -- (41,4) -- (43,4) -- (43,3) --(44,3);
\draw [samples=1000, thick] (0,0) --  (10,0) -- (10,2) --  (25,2) -- (25,5) -- (35,5) -- (35,6) --  (38,6) -- (38,7) -- (44,7) ;
\draw (44,0) node {$\times$} ;
\draw (44,-0.3) node[below] {$\Thetad$} ;
\draw [thick,color=red] (13.5,9) node[below] {$\emph{\Rd}$};
\draw [thick,color=blue] (14.5,0.5) node[below] {$-\emph{\Bd}$};
\end{tikzpicture}
\end{tabular}
\caption{The top figure represents a realization of $\Rg$ (in red) and $-\Bg$ (in blue) until time $\Thetag$. The down figure represents the random walks $\Rd$ and $\Bd$ derived from $(\Rg,\Bg)$ until time $\Thetad=\Thetag$. The random walk $\smash{\big(-\min(\Bdu_n,0)\big)_{n<\Thetad}}$ is the one in black. The green dashed lines are the times where a blue step swallows red sites (\ie at these times, both $\Bg$ and $\Rg$ jump : $\Bg$ returns to $0$ and $\Rg$ makes a negative jump).}
\label{fig:transRW}
\end{figure}

We also need to construct the volume process $(\Vg,\Vgr,\Vgb)$, defined in section~\ref{SectionEncoding}, on our probability space $(\Omega,\mathcal{F},\P)$. Let
\[\Big(\z{j}{i},\, i\geqslant 1,\, j\in \{-1\}\cup\{1,2,\ldots\}\Big)\]
be a family of independent random variables, independent of everything else,  such that, for any fixed $j\geqslant -1$, $\smash{\big(\z{j}{i}\big)_{i\geqslant 1}}$ is a sequence of i.i.d.~random variables. Besides $\z{j}{1}$ has the same distribution as the number of inner vertices inside a rooted free Boltzmann triangulation with a boundary of size $j+1$, (with the convention that $\z{-1}{i} = 1$). We define the random walk $(\Vd,\Vdr, \Vdb)$ by, for any $n\geqslant 1$,
\begin{align*}
\Vd_{n} &\defeq \sum_{k=1}^{n} \z{\Sd_{k-1} - \Sd_{k}}{k},\\
\Vdr_{n} &\defeq \sum_{k=1}^{n} \eta_k\z{\Sd_{k-1} - \Sd_{k}}{k} = \sum_{k=1}^{n} \eta_k\z{\Rd_{k-1} - \Rd_{k}}{k},\\
\Vdb_{n} &\defeq \sum_{k=1}^{n} (1-\eta_k)\z{\Sd_{k-1} - \Sd_{k}}{k} = \sum_{k=1}^{n} (1-\eta_k)\z{\Bd_{k-1} - \Bd_{k}}{k}.
\end{align*}
By construction, under $\Pg = \P(\,\cdot\,|\,\Sd \geqslant 2)$, the process $(\Vd,\Vdr,\Vdb)$ has the same law as the volume of the peeling process defined in section~\ref{SectionEncoding}. Rigorously speaking, since we start the processes $\Vd$,$\Vdr$ and $\Vdb$ from $0$, the volume defined in this way does not take into account the vertices located on the initial boundary. However, this only changes the value of the volume by a finite offset (at most $2$ when the peeling starts from the root edge). Therefore it does not change the tail asymptotic and we can safely ignore this technicality. \medskip

As a consequence, we can now work with $\Rd$, $\Bd$, $\Thetad$, $\Vd$, $\Vdr$, $\Vdb$ in place of $\Rg$, $\Bg$, $\Thetag$, $\Vg$, $\Vgr$, $\Vgb$. Let us stress again that, under $\P$, these processes are just classical random walks. 
\medskip

We make one last change to our problem by embedding our processes into continuous time using a classical ``Poissonization'' procedure. Let $\NP=(\NP_t)_{t\geqslant 0}$ denote a Poisson process with unit intensity, independent of everything else. We define the continuous-time processes $\Rc,\Bc,\Vc$\ldots from their discrete-time counterparts via the change of time induced by $\NP$, \ie
\[
\Bc_t \defeq \Bd_{N_t}, \quad \Rc_t \defeq \Bd_{N_t}, \quad \Vc_t \defeq \Vd_{N_t}, \quad\ldots
\]
More generally, we use an italic/calligraphic font for quantities related to continuous-time and normal font for discrete-time (the bold font indicates quantities directly related to the UIPT). The peeling time for $(\Rc,\Bc)$ is defined by
\begin{equation*}
\Thetac \defeq \inf\,\big\{\,t>0 : \Rc_t+\min(\Bcu_t,0) \leqslant 0\,\big\}.
\end{equation*}
or, equivalently, $N_\Thetac = \Thetad$. The main advantage of moving to continuous-time is that it decorrelates the red and blue processes. Indeed, under $\P$, the processes $\Rc$ and $\Bc$ are now independent and have the same law (which is also the law of $\Sd$, timed-changed by a Poisson process of intensity $1/2$). However, $\Rc$ and $\Bc$ are still not independent under the conditional measure $\Pg = \P(\,\cdot\,|\, \Sd\geqslant 2)$. 
\medskip

The aim of this section is to prove that we can recover some independence by moving the conditioning from $\Sd$ to $\Rd$. More precisely, just as in Section~\ref{SectionPerimeter}, we can define the measure $\smash{\P\big(\,\cdot \,\big|\, \Rd > 0\big)}$ (resp.~$\smash{\P\big(\,\cdot \,\big|\, \Rc > 0\big)}$) via a Doob's h-transform. Since the random walk $\Rd$ (resp.~$\Rc$) is oscillating, this conditional law corresponds to the weak limit for the sequence of measures obtained by conditioning $\Rd$ (resp.~$\Rc$) to stay positive up to arbitrarily large time. On the other hand, up to a time change, the processes $\Rc,\Rd$ and $\Sd$ all have the same law. Therefore, they admit the same harmonic functions. This implies that the Doob's transforms for $\Rc$ and $\Rd$ are equal\footnote{Another way to see this is by recalling that the conditional law can also be constructed by taking the limit for the processes required to hits arbitrarily high heights before entering the negative half line \emph{c.f.}~\cite{BD94}. Since these events depend only on the trace of the processes, they are the same for $\Rd$ and $\Rc$.} thus
\begin{equation}\label{egalcondi}
\P\big(\,\cdot \,\big|\, \Rc > 0\big) = \P\big(\,\cdot \,\big|\, \Rd > 0\big).
\end{equation}
Moreover, this conditional measure corresponds to the Doob's h-transform of $\P$ with the same function $h$ given by~\eqref{defhtransform} as before. We point out that~\eqref{egalcondi} states that the two operations ``conditioning'' and ``moving to continuous time'' commutes: we can perform them is any order. In particular, it follows that, even though~$\Rc$ depends on the Poisson process $\NP$, the law of $\NP$ remains unchanged under $\P\big(\,\cdot \,\big|\, \Rc > 0\big)$. \medskip

\subsection{Reduction of the problem}

We now show that we can perform the following three operations without changing the asymptotics of~\eqref{asympTheta}-\eqref{asympV}. 
\begin{enumerate}
\item[(i)] We can replace the probability $\Pg = \P(\,\cdot\,|\,\Sd\geqslant 2)$ by $\P(\,\cdot\,|\,\Rd>0)$.
\item[(ii)] We can move the origin from $(r_0,b_0)$ to $(1,0)$. This position does not make sense for the initial peeling process $(\Rg,\Bg)$ under $\Pg$ but it is well defined for $(\Rd,\Bd)$ under $\P(\,\cdot\,|\,\Rd>0)$.  
\item[(iii)] We can also move from discrete to continuous time.
\end{enumerate}
The precise statement of our result is the following.
 
\begin{prop}\label{ReformulationProp} 
Suppose we know that 
\begin{equation}\label{hypregvar}
\P_{(1,0)}\big(\thetac>n\,|\,\Rc>0\big) \asymp n^{-a}
\end{equation}
for some exponent $a>0$. Consider the peeling process starting from an initial boundary of size $s_0 = r_0 + b_0 \geqslant 2$ such that $r_0\geqslant 1$. Then, it holds that
\begin{align*}
\Pg_{(r_0,b_0)}\big(\Thetag >n\big) &\asymp \P_{(1,0)}\big(\thetac>n\,|\,\Rc>0\big),\\
\Pg_{(r_0,b_0)}\big(\Vg_\Thetag > v\big) &\lesssim \P_{(1,0)}\big(\Vc_{\thetac}>v\,\big|\,\Rc>0\big),\\
\Pg_{(r_0,b_0)}\big(\Vgr_{\Thetag-1} >v\big) &\gtrsim \P_{(1,0)}\big(\Vcr_{\thetac^-} > v\,\big|\,\Rc>0\big),
\end{align*}
with the convention that $\Vcr_{\thetac^-}$ denotes the left limit at $\thetac$ (\ie the value of $\Vcr$ at the previous step).
\end{prop}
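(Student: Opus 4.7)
The plan is to obtain the three comparisons by implementing, in order, the three reductions (iii), (ii), (i) announced above. Reduction (iii) is painless: since $\thetac$ is obtained from $\Thetad$ by the Poisson time-change $\NP$, and $\Vc$, $\Vcr$ are similarly Poisson time-changes, concentration of $\NP_t/t$ around $1$ converts any polynomial tail bound in $n$ into one with the same exponent in $t$. From now on I work entirely in continuous time.

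The crucial comparison is between $\Pg_{(r_0,b_0)}$ and $\P_{(1,0)}(\,\cdot\,\mid \Rc>0)$. The main tools are the two h-transform representations
\begin{equation*}
\Pg_{(r_0,b_0)}(A)\,h(s_0-1) = \E_{(r_0,b_0)}\bigl[h(\Sc_t-1)\,\mathds{1}_A\,\mathds{1}_{\Sc\geqslant 2 \text{ on }[0,t]}\bigr]
\end{equation*}
and the analogous one for $\P(\,\cdot\,\mid\Rc>0)$ with $\Rc$ in place of $\Sc$, valid for any event $A$ measurable up to time $t$, together with the \emph{independence of $\Rc$ and $\Bc$ under $\P$} in continuous time---a feature not available in discrete time. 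The decisive observation is that $\{\thetac>t\}\subseteq\{\Rc\geqslant 1 \text{ on }[0,t]\}$, which follows from the identity $\Rg_s = \Rc_s+\min(\Bcu_s,0)\geqslant 1$ valid on $[0,\thetac)$ together with $\min(\Bcu_s,0)\leqslant 0$: survival of the red cluster forces $\Rc$ to stay positive.

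For the upper bound on $\Pg_{(r_0,b_0)}(\thetac>t)$, I would apply sub-additivity of $h$ to write $h(\Sc_t-1)\leqslant h(\Rc_t)+h(\Bc_t)$ (with $h$ extended by $0$ on non-positive integers), and then integrate over $\Bc$ conditionally on $\Rc$ using independence. The $h(\Rc_t)$ contribution directly produces a multiple of $\P_{(1,0)}(\thetac>t\mid\Rc>0)$, after the change of initial condition from $(r_0,b_0)$ to $(1,0)$ is performed via the Markov property of the peeling process (a bounded number of peeling steps brings the chain, with bounded-below probability, to a configuration equivalent to the one started at $(1,0)$, affecting only multiplicative constants). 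The $h(\Bc_t)$ contribution is handled by moment estimates on $\sqrt{|\Bc_t|}$ against the $\Bc$-part of the survival constraint, which are provided by the random walk lemmas collected in the appendix. For the lower bound, I would intersect the survival event with a positive-probability event on which $\Bc\geqslant 0$ throughout $[0,t]$ and $\Rc\geqslant 2$; on such an event the constraint $\Sc\geqslant 2$ is automatic, $h(\Sc_t-1)\geqslant h(\Rc_t-1)\asymp h(\Rc_t)$, and the independence of $\Rc$ and $\Bc$ yields the matching lower bound.

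The two volume comparisons follow the same template: the volume increments $\z{\cdot}{\cdot}$ are, conditionally on the jump sizes of $\Sc$, independent of the trajectory, so they factor cleanly through the previous arguments. For the upper bound on $\Vc_\thetac$, one inserts the indicator $\mathds{1}_{\Vc_\thetac>v}$ into the h-transform expectations; for the lower bound on $\Vcr_{\thetac^-}$, one exploits that the red-step volumes depend only on the downward jumps of $\Rc$, which separates them cleanly from the $\Bc$-side. The main difficulty is the upper bound on $\Pg(\thetac>t)$, where the coupling of $\Rc$ and $\Bc$ induced by conditioning on $\Sc\geqslant 2$ must be disentangled quantitatively; this relies crucially on the precise asymptotics $h(k)\sim\sqrt{k}$ and on the scaling estimates for random walks in the domain of attraction of the Airy law.
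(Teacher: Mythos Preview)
Your overall architecture (h-transform representation, sub-additivity $h(\Sc_t-1)\leqslant h(\Rc_t)+h(\Bc_t)$, Markov-property change of starting point, Poisson time-change) is the same as the paper's, but both the lower and the upper bound contain a genuine gap.

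\textbf{Lower bound.} Intersecting with $\{\Bc\geqslant 0\text{ on }[0,t]\}$ is too costly: for $\Bc$ started at any fixed $b_0$ this event has probability $\asymp t^{-1/3}$ (first ladder time of a walk in the domain of attraction of a $3/2$-stable law), so after factoring it out you obtain at best $t^{-1/3}$ times the target, not a constant multiple. The paper avoids this by requiring only $\{\Bd_n\geqslant 0\}$ \emph{at the terminal time}, which has probability bounded away from $0$. To decouple $\{\Thetad>n\}$ from $\{\Bd_n\geqslant 0\}$ it does \emph{not} use the $\P$-independence of $\Rc$ and $\Bc$ (which is useless here since $\{\Thetad>n\}$ mixes them) but instead conditions on $(\eta,\Rd,\Vdr)$ and applies Harris' inequality to the remaining blue increments; both events are increasing in those increments, and the conditional probability of $\{\Bd_n\geqslant 0\}$ is uniformly bounded below.

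\textbf{Upper bound.} The $h(\Bc_t)$ term is the crux, and ``moment estimates on $\sqrt{|\Bc_t|}$'' will not close the argument: $\E[h(\Bc_t)]\asymp t^{1/3}$, so any naive bound of the form $\P(\thetac>t)\cdot\E[h(\Bc_t)]$ or a H\"older variant overshoots. The event $\{\thetac>t\}$ couples $\Rc$ and $\Bc$ through the running infimum, and conditioning on $\Rc$ leaves a path constraint on $\Bc$ with no obvious moment control. The paper's device is a symmetry lemma (Lemma~\ref{OnionLemma}): conditionally on $\Sd$ and on $\{\Thetad>n\}$, the red count $\Rd_k$ stochastically dominates $\Bd_k$ for every $k\geqslant n$. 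Since $h$ is non-decreasing this gives
\[
\E\big[\ind{\Thetad>n}\,h(\Bd_{n+1})\,\big|\,\Sd,(\z{j}{i})\big]\;\leqslant\;\E\big[\ind{\Thetad>n}\,h(\Rd_{n+1})\,\big|\,\Sd,(\z{j}{i})\big],
\]
so the $h(\Bd)$ contribution is bounded by the $h(\Rd)$ contribution and one is reduced to the latter, which is exactly $\P_{(r_0,b_0)}(\Thetad>n\mid\Rd>0)$ after using $\ind{\Thetad>n}=\ind{\Thetad>n}\ind{\Rdu_n>0}$. This stochastic-domination step is the missing idea in your sketch; without it the $h(\Bc_t)$ term remains uncontrolled.
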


\begin{proof}
We will split the proof in 3 parts.
\medskip

\textbf{\textit{Change of conditioning: lower bounds.}} We show that
\begin{align}
\label{minchange1}\Pg_{(r_0,b_0)}\big(\Thetag > n \big) & \gtrsim \P_{(1,0)}\big(\Thetad > n \,\big|\, \Rd>0 \big),\\
\label{minchange2}\Pg_{(r_0,b_0)}\big(\Vgr_{\Thetag-1} > v\big) & \gtrsim \P_{(1,0)}\big(\Vdr_{\Thetad-1} > v \,\big|\, \Rd>0\big).
\end{align}
We only need to consider the case $(r_0,b_0) = (2,0)$. Indeed, starting from any $(r_0,b_0)$ such that $r_0\geqslant 1$, we can always reach the boundary state $(2,0)$ with probability larger than some $\varepsilon = \varepsilon(r_0,b_0) >0$ in $2$ steps while still peeling the red cluster. Then, applying the Markov property at that instant, it follows that
\begin{equation*}
\Pg_{(r_0,b_0)}\big(\Thetag > n \big) \geqslant  \varepsilon \Pg_{(2,0)}\big(\Thetag > n \big) \, \hbox{ and }\,
\Pg_{(r_0,b_0)}\big(\Vgr_{\Thetag-1} > v\big) \geqslant \varepsilon \Pg_{2,0}\big(\Vgr_{\Thetag-1} > v\big).
\end{equation*}
Let $A$ be either the whole probability space $\Omega$ or the event  $\{ \Vdr_{n-1 }\leqslant v < \Vdr_n \}$ for $n\geqslant 1$. We write
\begin{align*}
\Pg_{(2,0)}\big(A, \Thetag > n \big) & = \E_{(2,0)}\left[\indic{A}\ind{\Thetad > n}\ind{\Sdu_n \geqslant 2}\frac{h(\Sd_n - 1)}{h(1)}\right]\\
& \geqslant \E_{(1,0)}\left[\indic{A}\ind{\Thetad > n}\ind{\Sdu_n \geqslant 1}\frac{h(\Sd_n)}{h(1)}\right],
\end{align*}
where the inequality above was obtained by remarking that, if a trajectory of $\Rd$ satisfies $\{ \Thetad > n\}$ and $A$, then it is also the case for the same trajectory shifted upward by $1$. On the other hand, by definition of $\Thetad$, we have 
$\{ \Thetad > n\} \subset \{\Sdu_n \geqslant 1\}$. Therefore, we get that
\begin{align}
\nonumber\Pg_{(2,0)}\big(A,&\, \Thetag > n \big)  \geqslant \E_{(1,0)}\left[\indic{A}\ind{\Thetad > n}\frac{h(\Sd_n)}{h(1)}\right]\\
\nonumber& \geqslant \E_{(1,0)}\left[\indic{A}\ind{\Thetad > n}\ind{\Rd_n > 0}\ind{\Bd_n \geqslant 0}\frac{h(\Rd_n + \Bd_n)}{h(1)}\right]\\
\nonumber&  \geqslant \E_{(1,0)}\left[\indic{A}\ind{\Thetad > n}\ind{\Rd_n > 0}\ind{\Bd_n \geqslant 0}\frac{h(\Rd_n)}{h(1)}\right]\\
\label{changemin1}&  =  \E_{(1,0)}\left[\indic{A}\ind{\Rd_n > 0} \P_{(1,0)}\big(\Thetad > n, \Bd_n \geqslant 0\, \big|\, \eta,\Rd,\Vdr \big)\frac{h(\Rd_n)}{h(1)}\right],
\end{align}
where we used that $h$ is non-decreasing for the third inequality. Conditionally on $\smash{(\eta,\Rd,\Vdr)}$, the increments of $\Sd$ are deterministic along the red steps but are still i.i.d.~along the blue steps. Moreover, $\eta,\Rd,\Vdr$ being fixed, the events $\{\Bd_n \geqslant 0\}$ and
\[\{\Thetad>n\}=\{\forall k\leqslant n, \,R_k+\min(\Bu_k,0)>0\}\]
are increasing with respect to the increment of $\Sd$ along the blue steps. Thus Harris inequality (\emph{c.f.}~Proposition~\ref{HarrisProp} in the appendix) implies that they are positively correlated under the conditional law $\P_{{(r_0,b_0)}}(\,\cdot\,|\,\eta,\,\Rd,\,\Vdr\big)$, \ie
\begin{multline}\label{changemin2}
\P_{(1,0)}\big(\Thetad > n, \Bd_n \geqslant 0\, \big|\, \eta,\Rd,\Vdr \big)  \\
\geqslant  \P_{(1,0)}\big(\Thetad > n \,\big|\, \eta,\Rd,\Vdr \big) \P_{(1,0)}\big(\Bd_n \geqslant 0 \,\big|\, \eta,\Rd,\Vdr \big)
\end{multline}
Conditionally on $(\eta,\Rd,\Vdr)$, the process $\Bd$, considered only at the instants $i$ where $\eta_i = 0$ (when it jumps), is a random walk with the same law as $\Sd$. Therefore we have
\begin{equation}
\label{changemin3}  \P_{(1,0)}\big(\Bd_n \geqslant 0\, \big|\, \eta,\Rd,\Vdr \big) \geqslant \min_{k}\, \P_{0}\big(\Sd_k \geqslant 0 \big) = c > 0
\end{equation}
since $\Sd$ is centered and lies in the domain of attraction of a spectrally negative stable law of index $3/2$. Combining~\eqref{changemin1},\eqref{changemin2} and~\eqref{changemin3}, we deduce that
\begin{align*}
\Pg_{(2,0)}\big(A, \Thetag > n \big) & \geqslant c\,\E_{(1,0)}\left[\indic{A}\ind{\Rd_n > 0} \ind{\Thetad > n}\frac{h(\Rd_n)}{h(1)}\right]\\
&= c \,\P_{(1,0)}\big(A,\, \Thetad > n \, | \, \Rd > 0\big).
\end{align*}
Taking $A = \Omega$, we conclude that~\eqref{minchange1} holds. For the lower bound on the volume generated by the red steps, we choose $A = \{ \Vdr_{n-1 }\leqslant v < \Vdr_n \}$ and then, by summing over $n$, we conclude that 
\begin{align*}
\Pg_{(2,0)}\big(\Vgr_{\Thetag-1} > v\big) &= \sum_{n\geqslant 1} \Pg_{(2,0)}\big( \Vgr_{n-1 }\leqslant v < \Vgr_n, \, \Thetag > n\big)\\
& \geqslant c\,  \sum_{n\geqslant 1}  \P_{(1,0)}\big( \Vdr_{n-1 }\leqslant v < \Vdr_n, \, \Thetad > n \, | \, \Rd > 0 \big)\\
& = c\, \P_{(1,0)}\big( \Vdr_{\Thetad-1} > v \, | \, \Rd > 0 \big),
\end{align*}
which completes the proof of~\eqref{minchange2}.
\bigskip

\textbf{\textit{Change of conditioning: upper bounds.}} We prove that
\begin{align}
\label{maxchange1}\Pg_{{(r_0,b_0)}}\big(\Thetag > n \big) & \lesssim \P_{(1,0)}\big(\Thetad > n \,\big|\, \Rd>0 \big),\\
\label{maxchange2}\Pg_{{(r_0,b_0)}}\big(\Vg_{\Thetag} > v\big) & \lesssim \P_{(1,0)}\big(\Vd_{\Thetad} > v \,\big|\, \Rd>0\big).
\end{align}
We use a similar strategy as in the lower bounds. Let now A be either $\Omega$ or the event $A = \{ \Vd_{n}\leqslant v < \Vd_{n+1} \}$. Notice that $A$ depends now on the randomness up to time $n+1$ so that we write
\begin{multline*}
\Pg_{(r_0,b_0)}\big(A,\,\Thetag > n \big) =  \E_{(r_0,b_0)}\left[\indic{A}\ind{\Thetad > n}\ind{\Sdu_{n+1} \geqslant 2}\frac{h(\Sd_{n+1} - 1)}{h(s_0-1)}\right]\\
\leqslant \E_{(r_0,b_0)}\left[\indic{A}\ind{\Thetad > n}\frac{h(\Rd_{n+1})}{h(s_0-1)}\right] + \E_{(r_0,b_0)}\left[\indic{A}\ind{\Thetad > n}\frac{h(\Bd_{n+1})}{h(s_0-1)}\right],
\end{multline*}
where we used the fact that the function $h$, defined by~\eqref{defhtransform}, is non-decreasing and satisfies the sud-additive inequality $h(x+y) \leqslant h(x) + h(y)$ for all $(x,y)\in \Z^2$. Recalling Lemma~\ref{OnionLemma} and using again that $h$ is non-decreasing, we get
\begin{align*}
\E_{(r_0,b_0)}\left[\indic{A}\ind{\Thetad > n}\frac{h(\Bd_{n+1})}{h(s_0-1)}\right] & = \E_{(r_0,b_0)}\left[\indic{A}\,\E_{(r_0,b_0)}\Big[\ind{\Thetad > n}\frac{h(\Bd_{n+1})}{h(s_0-1)} \,\Big|\, \Sd,\, \big(\z{j}{i}\big)\Big]\right]\\
& \leqslant \E_{(r_0,b_0)}\left[\indic{A}\,\E_{(r_0,b_0)}\Big[\ind{\Thetad > n}\frac{h(\Rd_{n+1})}{h(s_0-1)} \,\Big|\, \Sd,\,\big(\z{j}{i}\big)\Big]\right]\\
& = \E_{(r_0,b_0)}\left[\indic{A}\ind{\Thetad > n}\frac{h(\Rd_{n+1})}{h(s_0-1)}\right].
\end{align*}
Therefore, we obtain
\begin{align*}
\Pg_{(r_0,b_0)}\big(A, \Thetag > n \big) &\leqslant 2\, \E_{(r_0,b_0)}\left[\indic{A}\ind{\Thetad > n}\frac{h(\Rd_{n+1})}{h(s_0-1)}\right]\\
& =  2\, \E_{(r_0,b_0)}\left[\indic{A}\ind{\Thetad > n}\ind{\Rdu_{n+1} > 0}\frac{h(\Rd_{n+1})}{h(s_0-1)}\right]\\
& = \frac{2 h(r_0)}{h(s_0-1)}\P_{(1,0)}\big(A,\, \Thetad > n \,\big|\, \Rd>0 \big),
\end{align*}
where we used that $h(\Rd_{n+1}) = h(\Rd_{n+1})\ind{\Rd_{n+1} > 0}$ and $\ind{\Thetad > n} = \ind{\Thetad > n}\ind{\Rdu_n > 0}$ for the middle equality. Choosing $A = \Omega$, we obtain
\begin{equation}\label{maxchange1mid}
\Pg_{(r_0,b_0)}\big(\Thetag > n \big) \leqslant c\, \P_{(r_0,b_0)}\big(\Thetad > n \,\big|\, \Rd>0 \big).
\end{equation}
Just as for the lower bounds, taking $A = \{ \Vd_{n}\leqslant v < \Vd_{n+1} \}$ and summing over all $n\in\N$ yields
\begin{equation}\label{maxchange2mid}
\Pg_{(r_0,b_0)}\big(\Vg_{\Thetag} > v \big) \leqslant c\, \P_{(r_0,b_0)}\big(\Vg_{\Thetag} > v \,\big|\, \Rd>0 \big).
\end{equation}
It remains to prove that changing the initial condition to $(1,0)$ changes only the probabilities on the right hand side of~\eqref{maxchange1mid} and~\eqref{maxchange2mid} by a constant multiplicative factor. The argument used to change of origin for the lower bounds still applies: for any $(r_0,b_0)$ such that $r_0\geqslant 1$, we can find $m\in \N$ such that \[\P_{(0,1)}\big(\Rd_m = r_0,\, \Bd_m= b_0,\, \Thetad > m\big) = \varepsilon > 0.\]
Using the Markov property at time $m$, we deduce that
\[
\P_{(1,0)}\big(\Thetad > n \,\big|\, \Rd>0 \big) \geqslant \varepsilon\, \P_{(r_0,b_0)}\big(\Thetad > n \,\big|\, \Rd>0 \big)
\]
and 
\[
\P_{(1,0)}\big(\Vg_{\Thetad} > v \,\big|\, \Rd>0 \big) \geqslant \varepsilon\, \P_{(r_0,b_0)}\big(\Vg_{\Thetad} > v \,\big|\, \Rd>0 \big),
\]
which completes the proof of the upper bounds~\eqref{maxchange1} and~\eqref{maxchange2}.
\bigskip

\textbf{\textit{Passage in continuous time.}} By definition, we have $\Vd_\thetad=\Vc_\thetac$ and $\Vdr_{\thetad - 1}=\Vcr_{\thetac^-}$. This directly shows that 
\begin{align*}
\Pg_{(r_0,b_0)}\big(\Vg_\Thetag > v\big) &\lesssim \P_{(1,0)}\big(\Vd_{\thetad}>v\,\big|\,\Rd>0\big)= \P_{(1,0)}\big(\Vc_{\thetac}>v\,\big|\,\Rc>0\big),\\
\Pg_{(r_0,b_0)}\big(\Vgr_{\Thetag-1} >v\big) &\gtrsim \P_{(1,0)}\big(\Vdr_{\thetad^-} > v\,\big|\,\Rd>0\big) = \P_{(1,0)}\big(\Vcr_{\thetac^-} > v\,\big|\,\Rc>0\big).
\end{align*}
It remains to deal with the peeling time.  Recall that $\Thetad = N_{\thetac}$. Moreover, under $\P(\,\cdot\,|\, \Rc>0) = \P(\,\cdot\,|\, \Rd>0)$, the process $(N_t)_{t\geq 0}$ is a Poisson process with unit intensity which is independent or $\thetad$. Therefore, we have
\begin{equation*}
\P_{(1,0)}\big(\thetac>n\,|\,\Rc>0\big) \geqslant  \P_{(1,0)}\big(\thetad>n,\, N_n \leqslant n  \,|\,\Rd>0\big)  \geqslant \frac{1}{2} \P_{(1,0)}\big(\thetad>n\,|\,\Rd>0\big)
\end{equation*}
which takes care of the lower bound. For the upper bound, we write that
\begin{equation*}
\P_{(1,0)}\big(\thetac>n\,|\,\Rc>0\big) \leqslant \P_{(1,0)}\big(\thetad>\frac{n}{2}\,|\,\Rd>0\big) + \P\big(N_{n} < \frac{n}{2}\,|\,\Rd>0\big)
\end{equation*}
The second term decreases exponentially fast in $n$. Hence, according to assumption \eqref{hypregvar}, it is negligible compared to $\P_{(1,0)}\big(\thetad>\frac{n}{2}\,|\,\Rd>0\big) \asymp \P_{(1,0)}\big(\thetad> n\,|\,\Rd>0\big)$.
\end{proof}

\section{Proof of the main results.}
\label{SectionResolution}

\newcommand{\Hm}{H} 
\newcommand{\T}{T} 
\renewcommand{\U}{U} 


This section is devoted to establish the following three estimates which, together with Proposition~\ref{PropPerim} and Proposition~\ref{ReformulationProp}, complete the proof of Theorem~\ref{mainTheo} and Theorem~\ref{theoExploTime}. 

\begin{prop}\label{lastprop} We have
\begin{align*}
\P_{(1,0)}\big(\thetac > t\,\big|\,\Rc>0\big)&\asymp t^{-1/6},\\
\P_{(1,0)}\big(\Vcr_{\thetac^-} > v\,\big|\,\Rc>0\big)&\gtrsim v^{-1/8},\\
\P_{(1,0)}\big(\Vc_{\thetac}>v\,\big|\,\Rc>0\big)&\lesssim v^{-1/8}.
\end{align*}
\end{prop}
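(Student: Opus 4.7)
My proof plan reduces everything to the peeling-time estimate, from which the volume bounds will follow by scaling. I would begin by introducing the non-decreasing process $H_t \defeq -\min(\Bcu_t,0)$, so that $\thetac = \inf\{t>0:\Rc_t\leq H_t\}$. Under $\P(\cdot\,|\,\Rc>0)$, the process $\Rc$ is a Doob $h$-transform of the right-continuous $3/2$-stable random walk (Poisson-subordinated), with harmonic function $h(x)\sim\sqrt{x}$, and, crucially, $\Rc$ is independent of $\Bc$ so that $\Rc$ and $H$ are independent; both have $3/2$-stable scaling ($\Rc_t, H_t$ of order $t^{2/3}$).

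\medskip

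To prove $\P_{(1,0)}(\thetac>t\,|\,\Rc>0)\asymp t^{-1/6}$, I would introduce the ladder times $\tau_n\defeq\inf\{t:H_t\geq n\}$, which depend only on $\Bc$ and satisfy $\tau_n\sim n^{3/2}$ by scaling. Since $H$ is piecewise constant (jumping only at new minima of $\Bc$) and $\Rc$ is independent of $H$, the event $\{\thetac>\tau_n\}$ should be equivalent, up to a bounded multiplicative factor coming from controlling the downward excursions of $\Rc$ between successive ladder times, to $\bigcap_{k\leq n}\{\Rc_{\tau_k}>k\}$. The $h$-transform normalization $h(x)\sim\sqrt{x}$ then ensures that $M_n\defeq\Rc_{\tau_n}/n$ is a Markov chain which becomes stationary after the logarithmic time change $j=\log n$, and a persistence/renewal estimate for this log-scale chain should produce $\P(\min_{k\leq n}M_k>1)\asymp n^{-1/4}$. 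The exponent $1/4$ would be identified from the square-root asymptotics of $h$ together with the known $3/2$-stable ladder asymptotics \eqref{asymppk}. Translating via $\tau_n\sim n^{3/2}$ gives $\P(\thetac>t)\asymp t^{-1/6}$, as required.

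\medskip

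For the volume bounds, the key heuristic is $\Vc_t\sim t^{4/3}$, which follows from \eqref{formulaVolumeExpectation}: the boundary $\Sc_t$ is of order $t^{2/3}$, each downward step of size $d$ contributes $\sim d^2$ to the volume, and summing over $t$ peeling steps gives $t^{4/3}$. For the upper bound $\P(\Vc_\thetac>v)\lesssim v^{-1/8}$, I would split
\[
\P(\Vc_\thetac>v)\leq \P\!\left(\thetac>v^{3/4}/K\right)+\P\!\left(\Vc_{v^{3/4}/K}>v\right),
\]
with the first term $\lesssim K^{1/6}v^{-1/8}$ by the time estimate and the second decaying rapidly in $K$ (using the tail \eqref{formulaVolume} of a single Boltzmann triangulation volume, together with a union bound over the $v^{3/4}/K$ steps). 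For the lower bound on $\Vcr$, I would restrict to a good event $\mathcal{G}\subset\{\thetac>t\}$ on which $\Rc$ and $-\Bc^{\inf}$ remain of order $t^{2/3}$ throughout $[0,t]$, and on which, by the law of large numbers applied to the i.i.d.~Bernoulli sequence $(\eta_i)$, roughly half of the volume-generating steps are red. A Paley--Zygmund argument based on the first and second moments of $\Vcr_t\indic{\mathcal{G}}$ would then yield $\P(\Vcr_{\thetac^-}>ct^{4/3})\gtrsim\P(\mathcal{G})\asymp t^{-1/6}$, which is $\gtrsim v^{-1/8}$ upon taking $t\sim v^{3/4}$.

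\medskip

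The main obstacle is the identification of the persistence exponent $1/4$ in the time estimate. The challenge is that $\Rc$ has arbitrarily large downward jumps, so the reduction from $\{\inf_{s\leq\tau_n}(\Rc_s-H_s)>0\}$ to $\bigcap_k\{\Rc_{\tau_k}>k\}$ requires a sharp control of the maximal downward excursion of the $h$-transformed walk between consecutive ladder times of $H$. This is precisely the kind of fine estimate I would expect to extract from the technical random-walk lemmas collected in the appendix, and where most of the computational effort would be concentrated.
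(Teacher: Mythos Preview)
Your proposal has two genuine gaps, one in the peeling-time estimate and one in the volume upper bound.

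\textbf{Peeling time.} You never actually identify the persistence exponent; you only assert that a ``persistence/renewal estimate'' for the log-scale chain $M_n=\Rc_{\tau_n}/n$ should give $n^{-1/4}$. There is no mechanism in your sketch that produces $1/4$ rather than any other number, and the reduction from $\{\thetac>\tau_n\}$ to $\bigcap_k\{\Rc_{\tau_k}>k\}$ is itself delicate because $\Rc$ makes arbitrarily large downward jumps between the $\tau_k$. The paper takes a completely different route that avoids computing any persistence exponent directly. It compares the \emph{descending ladder times} $T_n$ of $\Bc$ with the \emph{last passage times} $U_n$ of $\Rc$ (conditioned positive) at the corresponding ladder heights, and shows (using the skip-free property via a time-reversal argument) that $(T_1,U_1)\overset{\law}{=}(U_1,T_1)$. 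Hence $T_n-U_n$ is a symmetric random walk with atomless increments, so by Sparre Andersen the index $\Lambda=\inf\{k:T_k<U_k\}$ has the \emph{universal} law with tail $\sim(\pi n)^{-1/2}$. A decoupling result of Vysotsky then makes $\{\Lambda=n\}$ independent of $T_n+U_n$, and since $\P(T_1+U_1>t)\sim c\,t^{-1/3}$ one gets $t^{-1/6}$ by summation. None of these ingredients appears in your outline.

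\textbf{Volume upper bound.} Your splitting
\[
\P(\Vc_\thetac>v)\leq \P\!\left(\thetac>v^{3/4}/K\right)+\P\!\left(\Vc_{v^{3/4}/K}>v\right)
\]
does not work. Under $\P(\cdot\,|\,\Rc>0)$ the blue part $\Vcb$ is unchanged and lies in the domain of attraction of a positive $3/4$-stable law, so $\Vcb_t/t^{4/3}$ converges in law to a nondegenerate limit. With $t=v^{3/4}/K$ this gives
\[
\P\!\left(\Vc_{v^{3/4}/K}>v\,\Big|\,\Rc>0\right)\;\geq\;\P\!\left(\Vcb_{v^{3/4}/K}>v\right)\;\longrightarrow\;\P\big(\text{stable}>K^{4/3}\big)>0
\]
as $v\to\infty$, for each fixed $K$. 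The second term is therefore bounded below by a positive constant independent of $v$, not something that decays like $v^{-1/8}$; letting $K=K(v)\to\infty$ only yields an exponent strictly worse than $1/8$. The paper instead bounds $\Vc_\thetac\leq \Vc_{U_\Lambda}$, decomposes $\Vc_{U_\Lambda}=(\Vcr_{U_\Lambda}+\Vcb_{T_\Lambda})+(\Vcb_{U_\Lambda}-\Vcb_{T_\Lambda})$, and again uses the Vysotsky decoupling on the first piece; the second piece is controlled using the already-established tail of $U_\Lambda+T_\Lambda$ together with a one-step volume estimate. Your lower bound for $\Vcr_{\thetac^-}$ is in the right spirit and close to what the paper does, but the other two parts need the $(T_n,U_n)$ machinery that your proposal does not contain.
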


\subsection{Coding \texorpdfstring{$\{\theta>n\}$, $\{\Vcr_{\theta^-} >v\}$, $\{\Vc_{\theta} >v\}$}{} via stopping times}

Recall that, under $\P_{(1,0)}$, the processes $\Rc$ and $\Bc$ are two independent continuous-time random walks, starting respectively from $1$ and $0$ and jumping with rate $1/2$ with step distribution prescribed by $(p_k)_{k\in \Z}$ given by~\eqref{defrwpk}. These walks are recurrent and right-continuous, \ie they can only make upward jumps of unit size. We can define the \emph{strict descending ladder times} of $\Bc$ by
\begin{align*}
\T_0 &\defeq 0,\\
\T_{n+1} &\defeq \inf \,\{\,t>\T_n:\Bc_t < \Bc_{\T_n}\,\}\quad\hbox{for $n\in\N$.}
\end{align*}
Under $\P_{(1,0)}$, these stopping times are all finite almost surely and the sequence $(\T_{n+1} - \T_n)_{n\in \N}$ is i.i.d.~and has the law of $\T_1$, the first hitting time of $(-\infty,-1]$ by~$\Bc$. We also define the associated \emph{strict descending ladder heights} by
\begin{equation*}
\Hm_n \defeq - \Bc_{\T_n}\quad\hbox{for $n\in\N$.}
\end{equation*}
On the other hand, under the probability measure $\P_{(1,0)}(\,\cdot\,|\,\Rc>0)$, the law of $\Bc$ is unchanged whereas $\Rc$ becomes a continuous-time Markov chain, jumping with rate $1/2$, and with transition kernel given by~\eqref{lawmcpeeling}. In particular, it is a transient process that diverges to infinity. This fact may be checked directly adapting the argument of~\eqref{transienceMC} or can also be seen using Tanaka's construction of a random walk conditioned to stay positive from a space/time reversal of its excursions between successive strict \emph{ascending} ladder times, see~\cite{Tan89} for details. Since $\Rc$ only make upward jumps of size $1$, it must therefore hit each height one last time. Thus, the last passage times at heights $\Hm_n$, $n\geqslant 1$, are well defined and almost surely finite:
\begin{equation*}
\U_n \defeq \sup\,\{\, t\geqslant 0 : \Rc_{t} = \Hm_n\,\} \quad\hbox{for $n\geqslant 1$},
\end{equation*} 
with the convention that $\U_0 \defeq 0$. Notice that, since $\Rc$ is a càdlàg process, we have $\Rc_{U_n} = \Hm_n + 1$ for all $n\geqslant 1$. Furthermore, the process $\Rc$ conditioned to stay positive and observed after the time of its last passage of a given height $h$ has the same law as the process conditioned to stay above $h$. Thus, under $\P_{(1,0)}(\,\cdot\,|\,\Rc>0)$ the sequence $(\U_{n+1} - \U_n)_{n\in \N}$ is i.i.d.~and has the same law as $U_1$.
\medskip

The reason we are interested in the sequences $(T_n)_{n\in \N}$ and $(U_n)_{n\in \N}$ is because they can be used to control the peeling time $\thetac$. See figure~\ref{fig:T,U} for an example.

\begin{prop}
\label{CodingStoppingTimes}
The following inclusions hold:
\begin{align*}
\big\{\forall i\leqslant n,\,  \T_i > \U_i\big\} \, &\subset \, \big\{\thetac > \T_n\big\},\\
\big\{ \T_n< \U_n\big\} \, &\subset\, \big\{\thetac < \U_n\big\},\\ 
\big\{\forall i\leqslant n,\,  \T_i > \U_i\big\} \, &\subset  \, \big\{\Vcr_{\thetac^-}\geqslant \Vcr_{\U_n}\big\},\\
\big\{ \T_n < \U_n \big\}\, &\subset\, \big\{\Vc_\thetac\leqslant \Vc_{\U_n}\big\}.
\end{align*}
\end{prop}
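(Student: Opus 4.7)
The plan is to decompose the defining condition $\Rc_t + \min(\Bcu_t, 0) \leq 0$ of $\thetac$ along the strata $[\T_i, \T_{i+1})$ given by the descending ladder times of $\Bc$. On $[\T_i, \T_{i+1})$ with $i \geq 1$ the running infimum $\Bcu_t$ equals $-\Hm_i$, so the event reduces to $\{\Rc_t \leq \Hm_i\}$; on $[0, \T_1)$ the condition cannot be met since $\Rc > 0$ under the conditioning. Thus $\thetac$ is morally the first time the red walk drops back to (or under) the current ``blue ceiling'' $\Hm_i$.

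The key technical input is the following statement about the last passage times of $\Rc$: since $\Rc$ under $\P(\,\cdot\,|\,\Rc>0)$ is transient to $+\infty$, right-continuous, and has only upward jumps of size one, for every $i \geq 1$ one has $\Rc_{\U_i^-} = \Hm_i$ and $\Rc_t > \Hm_i$ strictly for all $t > \U_i$. Indeed, any later visit of $\Rc$ at height $\leq \Hm_i$ after $\U_i$ would force it to pass through $\Hm_i$ again on its way to $+\infty$ (upward jumps are of size one), contradicting the definition of $\U_i$ as a supremum.

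With these two observations in hand, the four inclusions are essentially immediate. For the first, on $\{\T_i > \U_i\text{ for all }i \leq n\}$ take any $t \leq \T_n$: the case $t < \T_1$ is trivial, and if $t \in [\T_i, \T_{i+1})$ with $i \geq 1$, then $t \geq \T_i > \U_i$ yields $\Rc_t > \Hm_i = -\Bcu_t$; whence $\thetac > \T_n$. For the second, on $\{\T_n < \U_n\}$ the cadlag, integer-valued nature of $\Rc$ together with $\Rc_{\U_n^-} = \Hm_n$ shows that $\Rc \equiv \Hm_n$ on some left neighborhood $(\U_n - \delta, \U_n)$; choosing $\delta < \U_n - \T_n$ and picking any $t$ in that neighborhood, one gets $\Rc_t + \min(\Bcu_t, 0) \leq \Hm_n - \Hm_n = 0$, hence $\thetac \leq t < \U_n$. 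The two volume inclusions then follow by monotonicity of $\Vcr$ and $\Vc$: the first inclusion gives $\thetac > \T_n > \U_n$ and therefore $\U_n \leq \thetac^-$, yielding $\Vcr_{\thetac^-} \geq \Vcr_{\U_n}$; the second inclusion gives $\thetac < \U_n$ and therefore $\Vc_\thetac \leq \Vc_{\U_n}$.

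The only mildly subtle point --- and the main ``obstacle'' in this otherwise mechanical verification --- is the careful handling of cadlag conventions and strict versus non-strict inequalities, specifically in justifying $\Rc_{\U_n^-} = \Hm_n$ together with the ``cannot revisit $\Hm_i$ after $\U_i$'' property. Once those are in place, each of the four inclusions drops out directly from the definitions of $\T_n$, $\U_n$, $\Hm_n$ and $\thetac$.
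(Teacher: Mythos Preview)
Your proof is correct and follows essentially the same approach as the paper's own proof: both exploit that on $[\T_i,\T_{i+1})$ the running infimum $\Bcu$ is pinned at $-\Hm_i$, reducing the defining condition of $\thetac$ to $\Rc_t\leqslant\Hm_i$, and both use the right-continuity of $\Rc$ to conclude $\Rc_t>\Hm_i$ for $t>\U_i$. The only difference is cosmetic: you argue the first inclusion directly while the paper argues it by contrapositive, and you spell out the c\`adl\`ag details that the paper leaves implicit.
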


\begin{proof} By definition, $\thetac$ corresponds to the first time when $\Rc$ visit a site previously visited by $-\Bc$. Suppose that $\thetac$ occurs between times $\T_i$ and $\T_{i+1}$, for some $i\in \N$. This means that $\Rc$ enters $(-\infty, \Hm_{i-1}\rrbracket$ during this time interval so, in particular, $\U_{i-1} > \T_{i-1}$. This proves the first inclusion. The proof of the second inclusion is similar. The last two inclusions follow directly from the first two combined with the fact that $\Vcr$ and $\Vc$ are non-decreasing processes. 
\end{proof}

\begin{figure}[ht]
\centering
\begin{tikzpicture}[scale =0.3]
\foreach \k in {-2,-1,...,10} {\draw [color=gray!15,dashed] (-1,\k)-- (27,\k);}
\draw [>=stealth,->,gray!100] (0,-3) -- (0,11) ;
\draw [>=stealth,->,gray!100] (-1,0) -- (26,0) ;
\draw [dashed,color=blue!70] (5.8,-2) -- (5.8,-1) ;
\draw [color=blue!70] (5.8,-3) node {$T_1$} ;
\draw [dashed,color=blue!70] (14.4,-2) -- (14.4,1) ;
\draw [color=blue!70] (14.4,-3) node {$T_2$} ;
\draw [dashed,color=blue!70] (19,-2) -- (19,5) ;
\draw [color=blue!70] (19,-3) node {$T_3$} ;
\draw [dashed,color=blue!70] (20.7,-2) -- (20.7,5) ;
\draw [color=blue!70] (20.7,-3) node {$T_4$} ;
\draw [dashed,color=blue!70] (26,-2) -- (26,8) ;
\draw [color=blue!70] (26,-3) node {$T_5$} ;
\draw [dashed,color=red!70] (4.6,3) -- (4.6,10) ;
\draw [dashed,color=red!70] (4.6,3) -- (5.8,3) ;
\draw [color=red!70] (4.6,11) node {$U_1$};
\draw [dashed,color=red!70] (10.1,5) -- (10.1,10) ;
\draw [dashed,color=red!70] (10.1,5) -- (14.4,5) ;
\draw [color=red!70] (10.1,11) node {$U_2$};
\draw [dashed,color=red!70] (17,6) -- (17,10) ;
\draw [dashed,color=red!70] (17,6) -- (19,6) ;
\draw [color=red!70] (17,11) node {$U_3$};
\draw [dashed,color=red!70] (24.5,9) -- (24.5,10) ;
\draw [color=red!70] (24.5,11) node {$U_4$};
\draw [thick,samples=1000,color=red] (0,1) -- (1.5,1) -- (1.5,2) -- (2.9,2) -- (2.9,1) -- (3.4,1) -- (3.4,2) -- (4,2) -- (4,3)-- (4.6,3) -- (4.6,4) -- (6.8,4) -- (6.8,5) -- (7.4,5) -- (7.4,6) -- (9,6) -- (9,7) -- (9.4,7) -- (9.4,5) -- (10.1,5) -- (10.1,6) -- (11.6,6) -- (11.6,7) -- (11.9,7) -- (11.9,8) -- (13.1,8) -- (13.1,9) -- (15.8,9) -- (15.8,10) -- (16.4,10) -- (16.4,6) -- (17,6) --(17,7)--(17.4,7) -- (17.4,8) -- (17.9,8) -- (17.9,9) -- (18.1,9) -- (18.1,8) -- (19.3,8) -- (19.3,9)-- (22.6,9) -- (22.6,10) -- (23.7,10)-- (23.7,7) -- (24.1,7) -- (24.1,8) -- (24.5,8) -- (24.5,9) -- (25.4,9)--(25.4,10) -- (26,10);
\draw [thick,samples=1000,color=blue] (0,0) --  (1.3,0) -- (1.3,-1) -- (1.9,-1) -- (1.9,-2) -- (3.2,-2) -- (3.2,0) -- (5,0) -- (5,-1) -- (5.8,-1) -- (5.8,3) -- (6.7,3) -- (6.7,2) -- (8.5,2) -- (8.5,1) -- (9.6,1) -- (9.6,0) -- (10.3,0) -- (10.3,-1) -- (12,-1) -- (12,2) -- (13.5,2) -- (13.5,1) -- (14.4,1) -- (14.4,5) -- (15.2,5) -- (15.2,4) -- (15.5,4) -- (15.5,3) -- (17.7,3) -- (17.7,5) -- (19,5) -- (19,6) -- (19.7,6) -- (19.7,5) -- (20.7,5) -- (20.7,8) -- (21.5,8) -- (21.5,7) -- (21.8,7) -- (21.8,8) -- (22.4,8) -- (22.4,7) -- (23,7) -- (23,6) -- (24.9,6) -- (24.9,8) -- (26,8) -- (26,9) ;
\draw [samples=1000, thick] (0,0) --  (5.8,0) -- (5.8,3) --  (14.4,3) -- (14.4,5) -- (19,5) -- (19,6) --  (20.7,6) -- (20.7,8) -- (26,8) -- (26,9) ;
\draw [thick,color=red] (14,9) node[below] {$\Rc$};
\draw [thick,color=blue] (16.5,3) node[below] {$-\Bc$};
\draw (23.7,0) node {$\times$} ;
\draw (23.7,-0.3) node[below] {$\theta$} ;
\draw [dashed] (23.7,0) -- (23.7,7) ;
\end{tikzpicture}
\caption{In red, a realization of the walk $R$ conditioned to stay positive. In blue, a realisation of the walk $-B$ and, in black, the associated process $-\underline{B}$.}
\label{fig:T,U}
\end{figure}
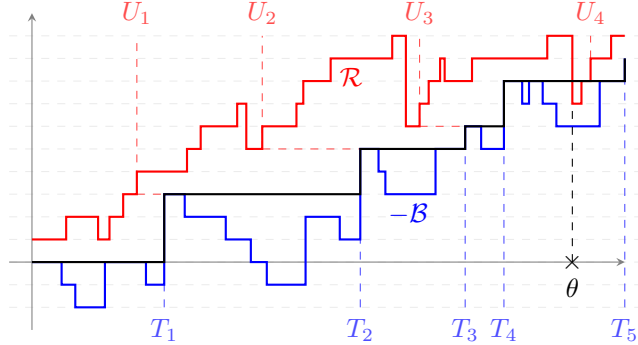

In the previous section, we showed that we can modify the problem by switching from the probability measure $\P(\,\cdot\,|\,\Rc + \Bc > 1)$ to $\P(\,\cdot\,|\,\Rc>0)$. While doing so, one might fear that we are losing the natural symmetry of the peeling procedure where blue and red vertices play the same role (since $p_c = 1/2$). In fact, it is rather the opposite that happens. Indeed, looking at the peeling process while the red cluster is still alive introduces a biasing between the red and blue processes that is, somehow, compensated under the new measure $\P(\,\cdot\,|\,\Rc>0)$. The following striking identity illustrates the above statement: 
\[
\hbox{the sequences $(\T_n)_{n\geqslant 0}$ and $(\U_n)_{n\geqslant 0}$ have the same law under $\P_{(1,0)}(\,\cdot\,|\,\Rc>0)$.}
\]
This equality is a consequence of the following more general result upon which the entire proof of Proposition~\ref{lastprop} is built.

\begin{prop} \label{Remarkableprop} The following identities hold under  $\P_{(1,0)}(\,\cdot\,|\,\Rc>0)$.
\begin{enumerate}
\item[(a)] The increments $(\T_{n+1} - \T_n,\,\Vcb_{\T_{n+1}} - \Vcb_{\T_n},\, \Vcr_{\U_{n+1}} - \Vcr_{\U_n},\, \U_{n+1} - \U_{n})_{n\in \N}$ form a sequence of i.i.d.~random variables.
\item[(b)] The triplets $(T_1,\Vcb_{T_1} + \Vcr_{U_1},U_1)$ and $(U_1,\Vcb_{T_1} + \Vcr_{U_1},T_1)$ have the same law.
\item[(c)] The $3$-dimensional random walks 
\[(T_n+U_n,\,\Vcr_{U_n}+\Vcb_{T_n},\,T_n-U_n)_{n\in \N}\]
and 
\[(T_n+U_n,\,\Vcr_{U_n}+\Vcb_{T_n},\,U_n-T_n)_{n\in \N}\]
have the same law.
\end{enumerate}
\end{prop}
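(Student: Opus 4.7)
The plan is to derive (c) from (a) and (b), prove (a) by the strong Markov property, and reduce (b) to a marginal time-reversal identity.

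\medskip

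For (a), I would apply the strong Markov property of $\Bc$ at the stopping times $\T_n$ to conclude that the $\Bc$-side triples $(\T_{n+1}-\T_n,\Vcb_{\T_{n+1}}-\Vcb_{\T_n},\Hm_{n+1}-\Hm_n)_{n\geqslant 0}$ form an i.i.d.~sequence (the volume increments decompose cleanly thanks to the independent family $(\z{j}{i})$). On the $\Rc$-side, the classical Markov property for an h-transformed walk at a last-passage time, combined with translation invariance, shows that conditionally on $\mathcal{F}_{\U_n}$ the shifted process $(\Rc_{\U_n+s}-\Hm_n)_{s\geqslant 0}$ is an independent copy of $\Rc$ started from $1$. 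Since $\Rc$ and $\Bc$ are independent under $\P(\cdot|\Rc>0)$, these two independence structures combine to yield (a).

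\medskip

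For (b), observe that $(\T_1,\Vcb_{\T_1})$ is built out of $\Bc$ and of the ``blue'' part of the independent family $(\z{j}{i})$, whereas $(\U_1,\Vcr_{\U_1})$ is built out of $\Rc$, the value $\Hm_1$, and the ``red'' part of $(\z{j}{i})$. Conditioning on $\Hm_1=h$ therefore makes these two pairs independent, and by Fubini statement (b) reduces to the marginal identity
\[
(\T_1,\Vcb_{\T_1})\,\big|\,\Hm_1=h \;\stackrel{d}{=}\; (\U_1,\Vcr_{\U_1})\,\big|\,\Hm_1=h \qquad\text{for every }h\geqslant 1.
\]
This marginal identity is the main obstacle. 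I would attack it via a time-reversal argument. Since $\Rc$ is the Doob $h$-transform, by the function in \eqref{defhtransform}, of the recurrent random walk with step distribution $(p_k)$ and uniform invariant measure, its reversal from the last-passage time $\U_1$ is a Markov chain with explicit transition kernel $h(y)p_{x-y}/h(x)$. The plan is to identify the law of this reversed trajectory, going from $\Hm_1+1$ down to $\Rc_0=1$, with the law of the path of $\Bc$ on $[0,\T_1]$ read backwards from $-h$ up to $0$, conditioned on $\Hm_1=h$. Such a path-level identity automatically transports to the pair (duration, accumulated volume), since the volume functionals are deterministic functionals of the jump sequence together with the independent family $(\z{j}{i})$. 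A possible alternative route is the Tanaka-type pathwise construction of the walk conditioned to stay positive as the time-reversed concatenation of the descending excursions of the unconditioned walk, which furnishes a direct bijective coupling between the two sides.

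\medskip

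Finally, (c) is a formal consequence of (a) and (b). By (a), the process in (c) is a three-dimensional random walk with i.i.d.\ increments of the form
\[
(A_n, B_n, C_n) = \bigl((\T_{n+1}-\T_n)+(\U_{n+1}-\U_n),\; (\Vcr_{\U_{n+1}}-\Vcr_{\U_n})+(\Vcb_{\T_{n+1}}-\Vcb_{\T_n}),\; (\T_{n+1}-\T_n)-(\U_{n+1}-\U_n)\bigr).
\]
Applying (b) to each increment gives $(A_n,B_n,C_n)\stackrel{d}{=}(A_n,B_n,-C_n)$, because swapping $\T_{n+1}-\T_n$ and $\U_{n+1}-\U_n$ preserves the symmetric components $A_n$ and $B_n$ while flipping the sign of $C_n$. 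Summing over $n$ transports this symmetry from the increments to the partial sums, which is exactly the content of (c).
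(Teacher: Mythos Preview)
Your handling of (a) and the derivation of (c) from (a) and (b) are correct and match the paper.

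The gap is in (b). Conditioning on $\Hm_1=h$ does make the two pairs conditionally independent, but the ``marginal identity''
\[
(\T_1,\Vcb_{\T_1})\,\big|\,\Hm_1=h \;\stackrel{d}{=}\; (\U_1,\Vcr_{\U_1})\,\big|\,\Hm_1=h
\]
that you reduce to is \emph{false}, so this sufficient condition cannot be met. The underlying reason is that the time-reversal of a right-continuous walk on its first negative excursion does not fix the undershoot $\Hm_1=-\Bc_{\T_1}$: it exchanges $\Hm_1$ with $1+\widehat{H}$, where $\widehat{H}=\Bc_{\T_1^-}$. What one actually obtains (combining the reversal with Tanaka, as in the appendix) is the joint identity $(\T_1,\U_1,\Hm_1,1+\widehat{H})\stackrel{d}{=}(\U_1,\T_1,1+\widehat{H},\Hm_1)$, from which (b) follows by taking marginals. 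Conditioning on $\Hm_1=h$ destroys this symmetry unless $1+\widehat{H}=\Hm_1$ almost surely, which fails as soon as the walk admits a negative jump of size at least~$2$. For a concrete check take the toy centered walk with $\P(\xi=1)=2/3$, $\P(\xi=-2)=1/3$: here $\Hm_1\in\{1,2\}$ with equal probability, and one finds $\T_1\mid\Hm_1=2\stackrel{d}{=}\Tup$ (a single ascending ladder time) whereas $\U_1\mid\Hm_1=2\stackrel{d}{=}\Tup_1+\Tup_2$; the two conditional laws are swapped rather than equal. The same phenomenon occurs for the peeling walk~\eqref{defrwpk}.

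The paper's proof of (b), carried out in Proposition~\ref{SkipFree-Symp}, therefore avoids conditioning on $\Hm_1$. It time-reverses a single unconditioned walk on the full interval $[0,T+U']$ (with $U'$ the first return to $0$ after $T$), a measure-preserving involution that swaps the pre-$T$ and post-$T$ pieces, and then invokes Tanaka's construction to identify, conditionally on the path up to $T$, the post-$T$ pair with $(\U_1,\Vcr_{\U_1^-})$ built from the independent conditioned walk. Your ``alternative route'' via Tanaka is therefore the right instinct, but the relevant reversal is on $[0,T+U']$ for the unconditioned walk, not on $[0,\U_1]$ for the conditioned one.
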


\begin{proof} The increments of $T$ and $U$ are i.i.d. as we have already noticed. Item~$(a)$ is easily checked using the same arguments. Item~$(b)$ is a consequence of the right-continuous properties of $\Rc$ and $\Bc$ and is proved in Proposition~\ref{SkipFree-Symp} of the appendix. Finally, item~$(c)$ follows directly from the combination of items~$(a)$ and~$(b)$. 
\end{proof}

We define $\Lambda$, the first index when the random walk $T$ is below $U$, \ie
\begin{equation*}
\Lambda \defeq \inf \,\{\, k\geqslant 1 : \, \T_k < \U_k\,\}. 
\end{equation*}
Equivalently, $\Lambda$ is the first descending ladder time for the random walk $(T_n - U_n)_{n\in \N}$. Item $(c)$ in the previous proposition shows that the step distribution of this random walk is symmetric. Moreover, this law has no atoms since it is a sum of exponential random variables. Therefore, the celebrated Sparre Andersen formula implies that the law of $\Lambda$ is universal: it does not depend on the particular form of the step distribution $T_1 - U_1$. More precisely, according to the corollary of Theorem~1 in chapter~XII.7 of~\cite{Fel71}, the generating function of $\Lambda$ is given by the simple formula
\begin{equation}\label{SPsym}
\E_{(1,0)}\big[s^{\Lambda}\,\big|\,\Rc>0\big] = 1 - \sqrt{1-s}\quad\mbox{for all $s\in [-1,1]$}.
\end{equation}

We will make use of the following estimates concerning $T$, $U$, $\Lambda$ and $\Vc$.

\begin{prop}\label{propAsympTU}
There exist positive constants $c_1$ and $c_2$ such that
\begin{align}
\label{densityUminusT}\P_{(1,0)}\big( \Lambda = n \,\big|\,\Rc>0\big)& \, \underset{n\to\infty}{\sim} \, \frac{1}{2\sqrt{\pi}\,n^{3/2}}\\
\label{tailT1}\P_{(1,0)}(\T_1 > t\,\big|\,\Rc>0) = \P_{(1,0}(\U_1 > t\,\big|\,\Rc>0) & \, \underset{t\to\infty}{\sim} \, \frac{c_1}{t^{1/3}}\\
\label{tailUpT}\P_{(1,0)}(\T_1 + \U_1 > t\,\big|\,\Rc>0)  & \, \underset{t\to\infty}{\sim} \, \frac{c_2}{t^{1/3}}
\end{align}
There exists $c_3>0$ such that, for any $x,n\geqslant 1$,
\begin{equation}\label{upboundV}
 \P_{(1,0)}\Big( \Vcr_{U_n}+\Vcb_{T_n}> x\,\Big|\,\Rc>0\Big)\leqslant  \frac{c_3 n}{x^{1/4}}.
\end{equation}
\end{prop}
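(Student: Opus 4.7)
I would first dispatch (a) directly from the Sparre--Andersen identity \eqref{SPsym}. The generalized binomial theorem yields
\[
\P_{(1,0)}(\Lambda = n \,|\, \Rc > 0) = \frac{1}{4^n(2n-1)}\binom{2n}{n},
\]
and Stirling's formula $\binom{2n}{n}\sim 4^n/\sqrt{\pi n}$ gives the claimed equivalence. For (b), the equality in law $T_1 \stackrel{d}{=} U_1$ follows at once from Proposition~\ref{Remarkableprop}(b). For the tail, since $T_1$ is a functional of $\Bc$ alone and $\Bc$ is independent of $\Rc$ under $\P$, the conditioning plays no role: $\P_{(1,0)}(T_1 > t \,|\, \Rc > 0) = \P(T_1 > t)$. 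By \eqref{asymppk}, $\Bc$ lies in the domain of attraction of a spectrally negative $3/2$-stable law with positivity parameter $\rho = 2/3$; Rogozin's theorem then yields $\P(\tilde T_1 > n) \sim c/n^{1-\rho} = c/n^{1/3}$ for the discrete first passage time, and the Poisson time change preserves the asymptotic.

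\textbf{Part (c).} By Proposition~\ref{Remarkableprop}(a), $(T_n + U_n)_{n \geq 0}$ is a random walk of i.i.d.\ non-negative increments distributed as $T_1 + U_1$. Part (b) together with $T_1 \leq T_1 + U_1 \leq 2 \max(T_1, U_1)$ gives $\P(T_1 + U_1 > t) \asymp t^{-1/3}$, so $T_1 + U_1$ lies in the domain of attraction of a positive stable law of index $1/3$. Hence $(T_n + U_n)/(c\,n^3)$ converges to a positive stable variable, and the regularly varying tail of $T_1 + U_1$ must be of the exact form $c_2/t^{1/3}$ by the direct implication of Feller's theorem.

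\textbf{Part (d).} This is the main technical step. By Proposition~\ref{Remarkableprop}(a), $\Vcr_{U_n} + \Vcb_{T_n}$ is a sum of $n$ i.i.d.\ copies of $W := \Vcr_{U_1} + \Vcb_{T_1}$. The plan is first to establish a single-variable bound $\P(W > v \,|\, \Rc > 0) \leq c/v^{1/4}$, then to propagate it to the iid sum via the standard truncation inequality
\[
\P\Big(\sum_{k=1}^n W_k > v\Big) \leq n\,\P(W > v) + \frac{n\,\E[W \wedge v]}{v} \leq \frac{c'n}{v^{1/4}},
\]
using $\E[W \wedge v] \leq \int_0^v \P(W > u)\,du \leq c\,v^{3/4}$. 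For the single-variable bound, independence of $\Rc$ and $\Bc$ under $\P(\cdot|\Rc>0)$ reduces the task to separately controlling $\P(\Vcb_{T_1} > v)$ and $\P(\Vcr_{U_1} > v \,|\, \Rc>0)$. I would first compute the marginal tail of a single blue jump contribution $\Delta \Vcb = z^{(-\Delta\Bc)}$: combining $p_{-k} \sim 1/(2\sqrt{\pi}k^{5/2})$ with the Boltzmann asymptotic $\P(z^{(k)} > n) \asymp k^3/n^{3/2}$ (derivable from \eqref{formulaVolume} via Stirling) yields $\P(\Delta\Vcb > v) \asymp v^{-3/4}$.

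The main obstacle is converting this marginal tail into the sharp bound $v^{-1/4}$ on the stopped sum $\Vcb_{T_1}$. A naive decomposition $\P(\Vcb_{T_1} > v) \leq \P(T_1 > s) + \P(\Vcb_s > v)$ yields only the weaker exponent $v^{-3/16}$ after optimization, because at the balance point $\Vcb_s$ sits precisely at its typical scale. The sharp exponent $1/4$ requires exploiting that $\Vcb_{T_1}$ is typically of order $T_1^{4/3}$, so that large values arise primarily from anomalously large $T_1$ rather than from anomalous increments at a moderate $T_1$; making this precise requires a joint fluctuation analysis of the bivariate process $(\Bc, \Vcb)$ at the stopping time $T_1$, leaning on the scaling-limit estimates of the appendix. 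The bound on $\Vcr_{U_1}$ then follows by an analogous route, assisted by the equality in distribution $U_1 \stackrel{d}{=} T_1$ from Proposition~\ref{Remarkableprop}(b).
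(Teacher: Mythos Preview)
Your treatments of \eqref{densityUminusT} and \eqref{tailT1} are fine and match the paper's argument (Taylor expansion of \eqref{SPsym}, and a direct citation of the ladder-time asymptotics for the $3/2$-stable walk).

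Your argument for \eqref{tailUpT}, however, contains a genuine gap. From $T_1 \leqslant T_1+U_1 \leqslant 2\max(T_1,U_1)$ you correctly deduce $\P(T_1+U_1>t)\asymp t^{-1/3}$, but the inference ``therefore $T_1+U_1$ lies in the domain of attraction of a positive $1/3$-stable law, hence its tail is $\sim c_2/t^{1/3}$'' is circular. A tail that merely satisfies $\asymp t^{-1/3}$ need not be regularly varying (e.g.\ $\P(X>t)=(2+\sin\log t)\,t^{-1/3}$), and without regular variation there is no domain of attraction to invoke. The precise equivalence $\sim c_2/t^{1/3}$ is actually used downstream (it is what gives the convergence of $(T_n+U_n)/n^{3}$ in the lower bound for $\thetac$), so you cannot weaken the conclusion to $\asymp$. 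The paper obtains the sharp asymptotic by a different, non-circular route: Proposition~\ref{SkipFree-Symp} gives the exact decomposition $T_1+U_1\overset{\law}{=}\widetilde T^{\uparrow}_1+\cdots+\widetilde T^{\uparrow}_{L+1}$ as an independent random sum, where $\P(\widetilde T^{\uparrow}_1>t)\sim c'/t^{2/3}$ and $\P(L>n)\sim c''/n^{1/2}$; composing these via Laplace transforms and a Tauberian theorem yields the claimed $\sim c_2/t^{1/3}$ directly.

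For \eqref{upboundV} your overall scheme (single-increment bound plus propagation to the i.i.d.\ sum) is the same as the paper's, and your truncation inequality is equivalent to Lemma~\ref{Sum-positive-stable}. You also correctly identify why the naive splitting $\P(\Vcb_{T_1}>v)\leqslant\P(T_1>s)+\P(\Vcb_s>v)$ only gives $v^{-3/16}$. The paper handles the sharp bound $\P(\Vcb_{T_1}>x)\lesssim x^{-1/4}$ in a dedicated appendix result (Proposition~\ref{propVolRW2}), via a cycle-lemma decomposition of $\Vc_{\Tup}$ followed by the random-sum representation $\Vc_{T-}\overset{\law}{=}\widetilde\Vc^{(1)}+\cdots+\widetilde\Vc^{(H)}$. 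One further point: reducing $\Vcr_{U_1}$ to $\Vcb_{T_1}$ requires more than the marginal identity $U_1\overset{\law}{=}T_1$, since $U_1$ is a last-passage time for a conditioned process and carries no obvious joint control on $\Vcr$. The paper instead invokes the stronger identity $\Vcr_{U_1-}\overset{\law}{=}\Vcb_{T_1-}$ from Proposition~\ref{SkipFree-Symp}, which immediately yields $\P(\Vcr_{U_1}+\Vcb_{T_1}>x)\leqslant 2\,\P(\Vcb_{T_1}>(x-1)/2)$.
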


\begin{proof} The asymptotic~\eqref{densityUminusT} follows from the Taylor expansion of the generating function of $\Lambda$ given by~\eqref{SPsym}. The tail estimate~\eqref{tailT1} is a restatement of item~\ref{StableRW-LadderTime} of Proposition~\ref{propStableRW}. \medskip

The random variable $T_1$ and $U_1$ are clearly not independent. Yet, according to Proposition~\ref{SkipFree-Symp} of the appendix, the random variable $T_1 + U_1$ under $\P_{(1,0)}(\,\cdot\,|\,\Rc>0)$ has the same law as $\smash{\widetilde{T}^{\scriptscriptstyle{\uparrow}}_{1} + \cdots  + \widetilde{T}^{\scriptscriptstyle{\uparrow}}_{L+1}}$, where the $\smash{\widetilde{T}^{\scriptscriptstyle{\uparrow}}_i}$'s are i.i.d~copies distributed as the first strict \emph{ascending} ladder time of the random walk $\Bc$ (\ie the first time when $\Bc$ hits $1$) and where $L$ is an independent random variable whose law is obtained by size biasing a typical step of $\Bc$.  In particular, we have $\smash{\P(L > n) \sim c/n^{1/2}}$ for some $c>0$. On the other hand, according to item~\ref{StableRW-LadderTime} of Proposition~\ref{propStableRW}, we also have $\smash{\P(\widetilde{T}^{\scriptscriptstyle{\uparrow}}_i > t) \sim c'/t^{2/3}}$ for some $c'>0$. These two estimates easily imply~\eqref{tailUpT} by computing, for instance, the Laplace transform of $T_1+U_1$ in term of the Laplace transforms of $\widetilde{T}^{\scriptscriptstyle{\uparrow}}_1$ and $L$ and then using Tauberian theorems to relate the tail distributions with the asymptotic of the Laplace transforms near $0$.\medskip

It remains to prove~\eqref{upboundV}. According to Proposition~\ref{SkipFree-Symp} of the appendix, under $\P_{(1,0)}(\,\cdot\,|\,\Rc>0)$, the random variables $\Vcb_{\T_1 - }$ and $\Vcr_{\U_1 -}$ have the same law. In particular, $\Vcb_{\T_1} + 1$ stochastically dominates $\smash{\Vcr_{\U_1}} = \smash{\Vcr_{\U_1 -}+1}$. On the other hand, according to Proposition~\ref{propVolRW2} of the appendix, we also have  $\smash{\P_{(1,0)}(\Vcb_{\T_1} > x) \lesssim x^{-1/4}}$. Therefore, a crude union bounds yields
\[\P_{(1,0)}\big( \Vcr_{U_1}+\Vcb_{T_1} > x \,\big|\,\Rc>0\big) \leqslant 2\,\P_{(1,0)}\Big( \Vcb_{T_1} > \frac{x-1}{2} \Big) \lesssim \frac{1}{x^{1/4}}.\]
Now, Proposition~\ref{Remarkableprop} states that the sequence $(\Vcr_{\U_{i+1}} - \Vcr_{\U_{i}}  + \Vcb_{\T_{i+1}} - \Vcb_{\T_{i}})_{i\in\N}$ is i.i.d~so we can apply Lemma~\ref{Sum-positive-stable} to conclude that~\eqref{upboundV} holds. 
\end{proof}

We end this section with a remarkable result which, in  a way, extends the universality property~\eqref{SPsym} for the distribution of the ladder times for symetric walks with continuous distributions. This key ingredient will enable us to decorrelate the random walks $(T_n-U_n)_{n\geqslant 0}$ and $(T_n+U_n)_{n\geqslant 0}$. We learned it from Vysotsky in~\cite{VV14} even though it seems to have been noticed previously by different authors. The following statement is a rewritting of the corolary that follows Proposition~2 of~\cite{VV14}.

\begin{prop}[\textbf{\!\!\textup{\cite{VV14}}}] \label{Vysotsky} Let $(X_n,Y_n)_{n\in\NN}$ be a bivariate random walk on $\R^2$ starting from $0$ and such that the law of the steps satisfies the relation
\[(X_1,Y_1)\overset{\law}=(X_1,-Y_1).\]
Assume furthermore that the law of $Y_1$ has no atom. Define the descending ladder time $\zeta \defeq \inf\,\{\, k\geqslant 1\, :\, Y_k < 0\,\}$.  Then, for any measurable set $A$ and for any $n\geqslant 1$, the events $\{\zeta > n\}$ and $\{X_n \in A\}$ are independent.
\end{prop}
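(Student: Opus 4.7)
Write $\xi_i = X_i - X_{i-1}$ and $\eta_i = Y_i - Y_{i-1}$ for the increments, so that $(\xi_i,\eta_i)_{i\geq 1}$ is i.i.d.\ with $(\xi_1,\eta_1)\overset{\law}{=}(\xi_1,-\eta_1)$. The plan is to exploit two symmetries of the joint law of $((\xi_i,\eta_i))_{i\leq n}$---invariance under index permutation (from the i.i.d.\ assumption) and invariance under $\eta$-sign-flipping (from the marginal symmetry)---to reduce the problem to a classical Sparre Andersen counting identity. Let $\pi$ be a uniform random permutation of $\{1,\ldots,n\}$ and let $(\epsilon_i)_{i\leq n}$ be i.i.d.\ uniform on $\{\pm 1\}$, all independent of $((\xi_i,\eta_i))_{i\geq 1}$. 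Combining the two symmetries gives the distributional identity
\[
((\xi_i,\eta_i))_{i\leq n} \overset{\law}{=} ((\xi_{\pi(i)},\,\epsilon_i\eta_{\pi(i)}))_{i\leq n}.
\]

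Since $X_n=\sum_{i\leq n}\xi_i=\sum_{i\leq n}\xi_{\pi(i)}$ is permutation-invariant, this yields
\[
\P(\zeta>n,\,X_n\in A)=\P\Big(\textstyle\sum_{i=1}^k \epsilon_i\eta_{\pi(i)}\geq 0\text{ for all }k\leq n,\ X_n\in A\Big).
\]
Conditioning on the $\sigma$-algebra generated by the walk and using that $(\pi,\epsilon)$ is independent of it, the right-hand side equals $\E[\mathbf{1}_{X_n\in A}\,\phi(\eta_1,\ldots,\eta_n)]$, where
\[
\phi(y_1,\ldots,y_n) \defeq \frac{1}{n!\,2^n}\,\#\Big\{(\sigma,s)\in\mathfrak{S}_n\!\times\!\{\pm 1\}^n :\textstyle \sum_{i=1}^k s_i y_{\sigma(i)}\geq 0\text{ for all }k\leq n\Big\}.
\]

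The crux is the following universal Sparre Andersen identity: for any real numbers $y_1,\ldots,y_n$ for which no signed partial sum vanishes---a condition that holds a.s. since $\eta_1$ is atomless---one has $\phi(y_1,\ldots,y_n)=\binom{2n}{n}4^{-n}$. In words, among the $n!\,2^n$ possible signed orderings of the magnitudes $|y_i|$, the fraction producing a non-negative walk depends only on $n$, not on the $y_i$. This can be established via the cycle lemma applied to the $2n$ signed copies $\pm y_i$, or by induction on $n$. Granting it,
\[
\P(\zeta>n,\,X_n\in A)=\binom{2n}{n}4^{-n}\,\P(X_n\in A),
\]
and taking $A=\R$ identifies the leading constant as $\P(\zeta>n)$, yielding the claimed independence. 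The only non-elementary ingredient is the universal count $\phi\equiv\binom{2n}{n}4^{-n}$; the rest is a direct assembly once the double symmetrization (permutation $\pi$ plus signs $\epsilon$) has been recognized.
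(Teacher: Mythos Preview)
The paper does not give its own proof of this proposition: it is quoted from Vysotsky~\cite{VV14} (``a rewriting of the corollary that follows Proposition~2 of~\cite{VV14}''), so there is no in-paper argument to compare against. Your argument is correct and is, in fact, essentially the proof in~\cite{VV14}: the double symmetrization (random permutation of the increments plus independent sign flips on the $\eta_i$) leaves the law of $((\xi_i,\eta_i))_{i\leq n}$ invariant and fixes $X_n$, which reduces the question to the deterministic Sparre Andersen count $\phi(y_1,\ldots,y_n)=\binom{2n}{n}4^{-n}$ on the generic set where no signed subset of the $\eta_i$ sums to zero (a set of full measure since $\eta_1$ is atomless).

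Two small remarks. First, for the independence claim you do not actually need the explicit value of the constant: once you know that $\phi$ is almost surely equal to some deterministic $c_n$, the identity $\P(\zeta>n,\,X_n\in A)=c_n\,\P(X_n\in A)$ follows, and taking $A=\R$ identifies $c_n=\P(\zeta>n)$. So the ``only non-elementary ingredient'' can be weakened from the exact count $(2n-1)!!$ to the mere constancy of $\phi$ on generic inputs. Second, that constancy is precisely the combinatorial content of Sparre Andersen's theorem for sign-symmetric sequences; a clean inductive proof conditions on the position and sign of the term of largest absolute value, or one can deduce it from the generating-function identity $\sum_n \P(\zeta>n)s^n=(1-s)^{-1/2}$ applied to any symmetric atomless law together with the observation that $\phi$ is locally constant off a finite hyperplane arrangement and invariant under the hyperoctahedral group.
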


\begin{cor} \label{VysotskyCor} Under the assumptions of Proposition~\ref{Vysotsky}, the events $\{\zeta = n\}$ and $\{X_n \in A\}$ are also independent. 
\end{cor}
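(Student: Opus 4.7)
The plan is to reduce the corollary directly to Proposition~\ref{Vysotsky} by means of the elementary set-theoretic identity $\{\zeta = n\} = \{\zeta \geq n\}\setminus\{\zeta > n\}$, which gives
\[
\P(\zeta = n,\, X_n \in A) = \P(\zeta \geq n,\, X_n \in A) - \P(\zeta > n,\, X_n \in A).
\]
The second term on the right hand side factors immediately as $\P(\zeta > n)\,\P(X_n \in A)$ by Proposition~\ref{Vysotsky} applied at index $n$. Since $\P(\zeta = n) = \P(\zeta \geq n) - \P(\zeta > n)$, it suffices to establish the analogous factorization
\[
\P(\zeta \geq n,\, X_n \in A) = \P(\zeta \geq n)\,\P(X_n \in A),
\]
and the claim then follows by subtraction.

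To prove this remaining identity, I would split the $n$-th step from the previous ones. Write $X_n = X_{n-1} + \Delta_n$, where the increment $\Delta_n \defeq X_n - X_{n-1}$ is independent of $(X_k,Y_k)_{k\leq n-1}$ and has the same marginal law $\mu$ as $X_1$. The event $\{\zeta \geq n\} = \{Y_1,\ldots,Y_{n-1} \geq 0\}$ is measurable with respect to the first $n-1$ steps, so a one-line conditioning argument gives
\[
\P(\zeta \geq n,\, X_n \in A) = \int \P\big(\zeta \geq n,\, X_{n-1} \in dx\big)\,\mu(A - x).
\]
The crucial observation now is that Proposition~\ref{Vysotsky}, applied at the index $n-1$ instead of $n$, asserts exactly that $\{\zeta > n-1\} = \{\zeta \geq n\}$ is independent of $X_{n-1}$. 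Substituting $\P(\zeta \geq n,\, X_{n-1} \in dx) = \P(\zeta \geq n)\,\P(X_{n-1} \in dx)$ and integrating back against $\mu$ yields the desired factorization, since $\int \P(X_{n-1} \in dx)\,\mu(A-x) = \P(X_n \in A)$.

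Honestly, I do not expect any real obstacle here: the corollary is a formal consequence of Proposition~\ref{Vysotsky} applied at two consecutive indices, combined with the i.i.d.\ nature of the increments to absorb the last step of $X$. The atomlessness of $Y_1$ plays no additional role beyond what is already used to invoke Proposition~\ref{Vysotsky}, because the events $\{\zeta > n-1\}$ and $\{\zeta > n\}$ are unambiguous at the level of sets.
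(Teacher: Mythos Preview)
Your proof is correct and follows essentially the same route as the paper: both decompose $\{\zeta = n\} = \{\zeta > n-1\}\setminus\{\zeta > n\}$, apply Proposition~\ref{Vysotsky} at index $n$ for the second piece, and handle the first piece by splitting off the independent $n$-th increment of $X$ and invoking Proposition~\ref{Vysotsky} at index $n-1$. The paper packages this last step as ``$\{\zeta > n-1\}$ is independent of $(X_{n-1},X_n)$'' while you write it as an integral against the law of the increment, but the content is identical.
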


\begin{proof} Since $(X,Y)$ is a random walk, we notice that, for any $n\in \N$, the increment $X_{n+1} - X_n$ is independent of $(X_i,Y_i)_{i\leqslant n}$. Therefore, using the Markov property at time $n$ combined with Proposition~\ref{Vysotsky}, we find that, for any measurable sets $A$ and~$B$, the events $\{\zeta > n\}$ and $\{X_n \in A,\,X_{n+1} \in B\}$ are also independent. We conclude that 
\begin{align*}
\P\big( \zeta = n,\, X_n \in A\big) & = \P\big( \zeta > n-1,\, X_n \in A\big) - \P\big( \zeta > n,\, X_n \in A\big)\\
& = \P\big( \zeta> n-1 \big) \P\big(X_n \in A\big) - \P\big(\zeta > n\big) \P\big(X_n \in A\big)\\
& = \P\big( \zeta = n\big)\P\big(X_n \in A\big).\qedhere
\end{align*}

\end{proof}

\subsection{Proof of Proposition~\ref{lastprop}}

\begin{lem}[\textbf{Lower bound for the peeling time}]
\begin{equation}\label{lowerbound_explo}
\P_{(1,0)}(\thetac > t\,|\,\Rc>0)\gtrsim \frac{1}{t^{1/6}}.
\end{equation}
\end{lem}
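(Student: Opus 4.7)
The plan is to exploit the coding from Proposition~\ref{CodingStoppingTimes}, which gives
\[
\{\Lambda > n\} = \{\,\forall i \leqslant n,\ T_i > U_i\,\} \subset \{\thetac > T_n\},
\]
together with the two ingredients provided by Propositions~\ref{Remarkableprop} and~\ref{Vysotsky}: the step symmetry of $(T_n - U_n)$ and the resulting independence between the descending ladder time $\Lambda$ and the companion coordinate $T_n + U_n$. The guiding heuristic is that on $\{\Lambda > n\}$ one has $T_n > U_n$, hence $T_n > (T_n+U_n)/2$, so it suffices to control $T_n + U_n$ jointly with $\Lambda$.

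More precisely, I would write
\[
\P_{(1,0)}\big(\thetac > t \,\big|\, \Rc>0\big) \geqslant \P_{(1,0)}\big(\Lambda > n,\, T_n + U_n > 2t \,\big|\, \Rc>0\big).
\]
Applying Proposition~\ref{Vysotsky} to the bivariate random walk $(X_n,Y_n) \defeq (T_n + U_n,\, T_n - U_n)$ --- whose step distribution has the required symmetry by item~(c) of Proposition~\ref{Remarkableprop}, and whose second coordinate has a continuous distribution (being a difference of independent variables with continuous laws) --- gives that $\{\Lambda > n\}$ and $\{T_n+U_n > 2t\}$ are independent under $\P_{(1,0)}(\,\cdot\,|\,\Rc>0)$. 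Hence
\[
\P_{(1,0)}\big(\thetac > t \,\big|\, \Rc>0\big) \geqslant \P_{(1,0)}(\Lambda > n\,|\,\Rc>0)\,\P_{(1,0)}(T_n + U_n > 2t \,|\, \Rc>0).
\]

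Now I would optimize the choice of $n$. From \eqref{densityUminusT} one has $\P(\Lambda > n\,|\,\Rc>0) \sim c\,n^{-1/2}$. On the other hand, by \eqref{tailUpT} the law of $T_1 + U_1$ lies in the domain of normal attraction of a positive $1/3$-stable law, so $(T_n + U_n)/n^3$ converges in distribution to a non-degenerate positive stable random variable $\mathcal{Z}$. Choosing $n \defeq \lfloor t^{1/3}/K \rfloor$ with $K>0$ large but fixed, we get $2t/n^3 \to 2K^3$ and thus $\P(T_n + U_n > 2t\,|\,\Rc>0) \to \P(\mathcal{Z} > 2K^3) > 0$, while simultaneously $\P(\Lambda > n\,|\,\Rc>0) \asymp t^{-1/6}$. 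Combining these two estimates yields the desired lower bound $\P_{(1,0)}(\thetac > t\,|\,\Rc>0) \gtrsim t^{-1/6}$.

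The only mildly delicate point is the verification of the hypotheses of Proposition~\ref{Vysotsky}; once the symmetry of $T_1 - U_1$ (from item~(c) of Proposition~\ref{Remarkableprop}) and the absence of atoms are in hand, the argument reduces to the two clean asymptotics recalled above and a stable-limit lower bound for the heavy-tailed sum $T_n + U_n$.
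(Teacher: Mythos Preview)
Your proposal is correct and follows essentially the same route as the paper: both use the inclusion $\{\Lambda>n\}\subset\{\thetac>T_n\}$ from Proposition~\ref{CodingStoppingTimes}, replace $T_n>t$ by $T_n+U_n>2t$ via $T_n>U_n$ on $\{\Lambda>n\}$, decouple using Proposition~\ref{Vysotsky} with $(X_n,Y_n)=(T_n+U_n,T_n-U_n)$, and then pick $n\asymp t^{1/3}$ to combine $\P(\Lambda>n)\asymp n^{-1/2}$ with the stable limit for $(T_n+U_n)/n^3$. The only cosmetic difference is your extra parameter $K$ in $n=\lfloor t^{1/3}/K\rfloor$, which is unnecessary since the limiting positive $1/3$-stable law has full support on $(0,\infty)$, so $K=1$ already gives a strictly positive limit.
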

\begin{proof}
Let $n\geqslant 1$. In view of Proposition~\ref{CodingStoppingTimes}, we can write
\begin{align}
\nonumber\P_{(1,0)}(\thetac > t\,|\,\Rc>0) & \geqslant \P_{(1,0)}\big( \Lambda > n \hbox{ and } \T_n > t \,\big|\, \Rc>0\big)\\
\nonumber&  \geqslant  \P_{(1,0)}\big( \Lambda > n \hbox{ and } \T_n+ \U_n > 2t \,\big|\, \Rc>0\big)\\
\label{loboundpeel1} &  =  \P_{(1,0)}\big( \Lambda > n \,\big|\,\Rc>0\,\big)\,\P\big( \T_n+ \U_n >2t\,\big|\,\Rc>0\big),
\end{align}
where we used that $U_n>T_n$ on the event $\{\Lambda>n\}$ for the second inequality
and Proposition~\ref{Vysotsky} with $(X_n,Y_n) = (\T_n + \U_n, \T_n - \U_n)$ for the last equality. We now choose $n=\lfloor t^{1/3}\rfloor$. Equivalence~\eqref{tailUpT} of Proposition~\ref{propAsympTU} asserts that $(\T_n + \U_n)/n^{3}$ converges in law towards a positive stable random variable of index $1/3$. Therefore
\begin{equation}
\label{loboundpeel2} \lim_{t\to\infty }\P_{(1,0)}\big( \T_n + \U_n >2t\,\big|\,\Rc>0\big) = c > 0.
\end{equation}
On the other hand, item~\eqref{densityUminusT} of Proposition~\ref{propAsympTU} insures that
\begin{equation}
\label{loboundpeel3}\P_{(1,0)}\big(\Lambda > n\,\big|\,\Rc>0\,\big) \sim \frac{1}{\sqrt{\pi n}} \sim \frac{1}{\sqrt{\pi} t^{1/6}}.
\end{equation}
The combination of~\eqref{loboundpeel1}, \eqref{loboundpeel2} and~\eqref{loboundpeel3} together yields~\eqref{lowerbound_explo}.
\end{proof}

\begin{lem}[\textbf{Upper bound for the peeling time}]
\begin{equation*}
\P_{(1,0)}(\thetac > t\,|\,\Rc>0)\lesssim \frac{1}{t^{1/6}}.
\end{equation*}
 \label{LemUpBoundPeel}
\end{lem}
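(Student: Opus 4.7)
The plan is to run the lower-bound argument ``in reverse'': use the second inclusion of Proposition~\ref{CodingStoppingTimes} at the random index $\Lambda$, then decorrelate via Corollary~\ref{VysotskyCor}, and finally balance two competing tail effects.

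First, $\Lambda$ is the first descending ladder time of the random walk $T_n - U_n$, whose step distribution is symmetric by item~(c) of Proposition~\ref{Remarkableprop} and has no atom since $T_1, U_1$ are functionals of the continuous-time walks $\Bc, \Rc$. In particular $\Lambda < \infty$ almost surely. By definition $T_\Lambda < U_\Lambda$, so applying the inclusion $\{T_n < U_n\} \subset \{\thetac < U_n\}$ of Proposition~\ref{CodingStoppingTimes} at $n = \Lambda$ gives $\thetac < U_\Lambda \leqslant T_\Lambda + U_\Lambda$ almost surely. Hence
\[
\P_{(1,0)}(\thetac > t \,|\, \Rc > 0) \leqslant \P_{(1,0)}(T_\Lambda + U_\Lambda > t \,|\, \Rc > 0).
\]

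Next, to break the dependence between $\Lambda$ and $T_\Lambda + U_\Lambda$, set $(X_n,Y_n) \defeq (T_n + U_n,\, T_n - U_n)$. By item~(c) of Proposition~\ref{Remarkableprop}, $(X_1,Y_1)$ and $(X_1,-Y_1)$ have the same law, and $Y_1$ has a continuous distribution. Since $\Lambda$ is exactly the descending ladder time of $Y$, Corollary~\ref{VysotskyCor} ensures that $\{\Lambda = n\}$ and $\{X_n > t\}$ are independent. Decomposing on the value of $\Lambda$ yields
\[
\P_{(1,0)}(\thetac > t \,|\, \Rc > 0) \leqslant \sum_{n\geqslant 1} \P_{(1,0)}(\Lambda = n \,|\, \Rc > 0)\, \P_{(1,0)}(T_n + U_n > t \,|\, \Rc > 0).
\]

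It remains to combine tail estimates. By \eqref{densityUminusT}, $\P(\Lambda = n \,|\, \Rc > 0) \lesssim n^{-3/2}$. By \eqref{tailUpT} the step $T_1+U_1$ has a regularly varying tail of index $1/3$, and since $(T_n+U_n)_{n\geqslant 0}$ is a random walk with i.i.d.~increments (item~(a) of Proposition~\ref{Remarkableprop}), Lemma~\ref{Sum-positive-stable} of the appendix yields the uniform bound $\P(T_n+U_n > t \,|\, \Rc > 0) \lesssim n/t^{1/3}$, exactly as in the derivation of~\eqref{upboundV}. Splitting the sum at $n_0 = \lfloor t^{1/3}\rfloor$ and bounding the second factor trivially by $1$ for $n > n_0$,
\[
\sum_{n \geqslant 1} \P(\Lambda = n)\, \P(T_n + U_n > t) \lesssim \frac{1}{t^{1/3}} \sum_{n \leqslant n_0} \frac{1}{\sqrt{n}} + \sum_{n > n_0} \frac{1}{n^{3/2}} \lesssim \frac{1}{t^{1/6}},
\]
the two halves producing matching contributions of the right order---which is exactly why the lower bound of the previous lemma is sharp.

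The only real technical obstacle is the uniform-in-$n$ tail bound $\P(T_n + U_n > t) \lesssim n/t^{1/3}$. Asymptotically (for $n$ fixed and $t \to \infty$) this is the usual principle of a single big jump for positive i.i.d.~variables in the stable domain of index $\alpha = 1/3 < 1$; the uniformity in $n$ needed here is precisely what Lemma~\ref{Sum-positive-stable} of the appendix is set up to deliver, so no new difficulty arises beyond citing it.
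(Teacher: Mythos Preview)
Your proof is correct and follows essentially the same approach as the paper: bound $\thetac$ by $U_\Lambda \leqslant T_\Lambda + U_\Lambda$ via Proposition~\ref{CodingStoppingTimes}, decorrelate $\{\Lambda = n\}$ from $\{T_n + U_n > t\}$ using Corollary~\ref{VysotskyCor} applied to the bivariate walk $(T_n+U_n,\,T_n-U_n)$, then combine the tail estimates~\eqref{densityUminusT}, \eqref{tailUpT} and Lemma~\ref{Sum-positive-stable}, splitting the sum at $\lfloor t^{1/3}\rfloor$. The paper's argument is line-for-line the same.
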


\begin{proof} According to Proposition~\ref{CodingStoppingTimes}, we have $\thetac \leqslant U_\Lambda$ so that
\begin{equation}\label{upboundpeel1}
\P_{(1,0)}(\thetac > t\,|\,\Rc>0)\leqslant \P_{(1,0)}\big( \U_\Lambda > t \,\big|\,\Rc>0\big) \leqslant \P_{(1,0)}\big(\T_\Lambda + \U_\Lambda > t\,\big|\,\Rc>0\big).
\end{equation}
We decompose the right hand side according to the value of $\Lambda$. This gives
\begin{align*}
\P_{(1,0)}\big(\T_\Lambda + \U_\Lambda > t\,\big|\,\Rc>0\big) & \,\leqslant \,\sum_{k=1}^\infty  \P_{(1,0)}\big(\T_k + \U_k > t\hbox{ and } \Lambda = k\,\big|\,\Rc>0\big)\\
&  =  \sum_{k=1}^\infty  \P_{(1,0)}\big(\T_k + \U_k > t\,\big|\,\Rc>0\big) \P_{(1,0)}\big(\Lambda = k\,\big|\,\Rc>0\big),
\end{align*}
where we used Corolary~\ref{VysotskyCor} with $(X_n,Y_n) = (\T_n + \U_n, \T_n - \U_n)$ for the last equality. We can use Proposition~\ref{propAsympTU} and Lemma~\ref{Sum-positive-stable} to upper bound the probabilities appearing in the sum above. We conclude that there exists $c>0$, such that, for $t$ large enough,
\begin{align}
\nonumber\P_{(1,0)}\big(\T_\Lambda + \U_\Lambda > t\,\big|\,\Rc>0\big) & \,\leqslant \,\sum_{k=1}^\infty  \Big(\frac{c k}{t^{1/3}}\wedge 1\Big)\frac{1}{\sqrt{\pi}k^{3/2}}\\
\nonumber&  =  \frac{c}{\sqrt{\pi}t^{1/3}}\,\sum_{k=1}^{\lfloor t^{1/3}\rfloor}\frac{1}{k^{1/2}} + \sum_{k>\lfloor t^{1/3}\rfloor} \frac{1}{\sqrt{\pi}k^{3/2}}\\
&\lesssim t^{-1/3}\,(\lfloor t^{1/3}\rfloor)^{1/2} + (\lfloor t^{1/3}\rfloor)^{-1/2} \lesssim \frac{1}{t^{1/6}}.\label{upboundpeel2}
\end{align}
Combining the last line with~\eqref{upboundpeel1} completes the proof of Lemma~\ref{LemUpBoundPeel}.
\end{proof}

\begin{lem}[\textbf{Upper bound for the volume}]
\begin{equation}\label{upperbound_volume}
\P_{(1,0)}(\Vc_{\thetac} > v\,|\,\Rc>0)\lesssim \frac{1}{v^{1/8}}.
\end{equation}
\end{lem}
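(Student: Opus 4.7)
My plan is to bound $\P_{(1,0)}(\Vc_{\thetac} > v \mid \Rc > 0)$ by first reducing, via the last inclusion of Proposition~\ref{CodingStoppingTimes}, to the estimate $\Vc_{\thetac} \leqslant \Vc_{U_\Lambda}$, which holds almost surely since $\Lambda < \infty$ a.s.\ by~\eqref{SPsym}. I then decompose
\[
\Vc_{U_\Lambda} \defeq \Xi_\Lambda + \Psi,\qquad \hbox{where } \Xi_n \defeq \Vcr_{U_n} + \Vcb_{T_n} \hbox{ and } \Psi \defeq \Vcb_{U_\Lambda} - \Vcb_{T_\Lambda} \geqslant 0,
\]
and bound each term separately by $\lesssim v^{-1/8}$.

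For $\Xi_\Lambda$, I apply Corollary~\ref{VysotskyCor} to the bivariate walk $(\Xi_n,\, T_n - U_n)_{n\geqslant 0}$, whose second coordinate is symmetric and atom-free by Proposition~\ref{Remarkableprop}\,(c) combined with the exponential structure of the jump times. This yields the independence of $\{\Lambda = n\}$ and $\{\Xi_n > v\}$. Combining this with $\P(\Xi_n > v \mid \Rc > 0) \leqslant c_3\, n\, v^{-1/4}$ from~\eqref{upboundV} and with $\P(\Lambda = n \mid \Rc > 0) \sim (2\sqrt{\pi}\,n^{3/2})^{-1}$ from~\eqref{densityUminusT} gives
\[
\P(\Xi_\Lambda > v \mid \Rc > 0) \,=\, \sum_{n\geqslant 1}\! \P(\Lambda = n)\, \P(\Xi_n > v) \,\lesssim\, \sum_{n\geqslant 1}\! n^{-3/2}\min(n v^{-1/4},\, 1) \,\lesssim\, v^{-1/8},
\]
where the last inequality follows by splitting the sum at $n \asymp v^{1/4}$, exactly as in the estimate~\eqref{upboundpeel2}.

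For $\Psi$, the crucial point is that $T_\Lambda$ is a stopping time for the enlarged filtration $\mathcal{F}_t \defeq \sigma(\Bc_s,\, s \leqslant t) \vee \sigma(\Rc)$: indeed, $H_\Lambda$ is $\sigma(\Bc_s,\, s \leqslant T_\Lambda)$-measurable, $U_\Lambda$ is $\Rc$-measurable once $H_\Lambda$ is known, and hence the index $\Lambda$ itself is determined from $\mathcal{F}_{T_\Lambda}$. Since $\Rc$ and $\Bc$ are independent under $\P(\cdot\mid\Rc>0)$, the strong Markov property implies that, conditionally on $\mathcal{F}_{T_\Lambda}$ (in particular on $\tau \defeq U_\Lambda - T_\Lambda$), the quantity $\Psi$ has the law of the blue volume accumulated by a fresh independent copy of the walk run for time $\tau$. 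A ladder-time decomposition for this fresh walk, combined with the bound $\P(\Vcb_{T_n} > v) \lesssim n\, v^{-1/4}$ (a consequence of~\eqref{upboundV} since $\Vcb_{T_n} \leqslant \Xi_n$) and the fact that the number of ladder epochs in a time interval of length $\tau$ is of order $\tau^{1/3}$ (since $T_1$ lies in the domain of attraction of a $1/3$-stable law by~\eqref{tailT1}), yields $\P(\Psi > v \mid \tau) \lesssim \tau^{1/3}\, v^{-1/4}$. It then remains to integrate in $\tau$ using $\P(\tau > s) \leqslant \P(U_\Lambda > s) \lesssim s^{-1/6}$, which follows from $U_\Lambda \leqslant T_\Lambda + U_\Lambda$ and the same computation as in the proof of Lemma~\ref{LemUpBoundPeel}. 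Splitting at $u = v^{-1/4}$ in the resulting integral gives $\P(\Psi > v) \lesssim v^{-1/8}$.

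The main obstacle is the control of $\Psi$: since $U_\Lambda$ is not a ladder time of $\Bc$, it cannot be handled by Proposition~\ref{Remarkableprop} directly. The reduction to a fresh independent walk via the strong Markov property at the joint-filtration stopping time $T_\Lambda$, together with the tail on $\tau$ already obtained in the analysis of the peeling time, is the technical heart of the argument.
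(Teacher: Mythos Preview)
Your overall architecture matches the paper's: the reduction $\Vc_{\thetac}\leqslant \Vc_{U_\Lambda}$, the decomposition into $\Xi_\Lambda=\Vcr_{U_\Lambda}+\Vcb_{T_\Lambda}$ and $\Psi=\Vcb_{U_\Lambda}-\Vcb_{T_\Lambda}$, the treatment of $\Xi_\Lambda$ via Corollary~\ref{VysotskyCor} applied to $(\Vcr_{U_n}+\Vcb_{T_n},\,T_n-U_n)$, and the strong Markov argument showing that, conditionally on $\tau\defeq U_\Lambda-T_\Lambda$, the variable $\Psi$ has the law of the blue volume accumulated by a fresh walk run for time $\tau$. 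All of this is exactly what the paper does.

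The gap is in your bound $\P(\Psi>v\mid\tau)\lesssim\tau^{1/3}v^{-1/4}$ via ladder times. Writing $M_\tau$ for the number of descending ladder epochs of the fresh walk in $[0,\tau]$, you would need $\Vcb_\tau\leqslant\Vcb_{T_{M_\tau+1}}$ and then to plug ``$M_\tau\approx\tau^{1/3}$'' into $\P(\Vcb_{T_n}>v)\lesssim n\,v^{-1/4}$. But $M_\tau$ is random and, more importantly, the stopping index $M_\tau+1$ is coupled to the volume increments $\Vcb_{T_j}-\Vcb_{T_{j-1}}$ through the common excursions of $\Bc$; the bound $\P(\Vcb_{T_n}>v)\lesssim n\,v^{-1/4}$ is only available for deterministic $n$ (Lemma~\ref{Sum-positive-stable}). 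One can repair this by splitting on $\{M_\tau\leqslant K\}$ and invoking concentration of $M_\tau/\tau^{1/3}$ towards its Mittag--Leffler limit, but this produces at best $\tau^{1/3}(\log v)^{c}\,v^{-1/4}$ after some work, and none of this is in your write-up.

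The paper bypasses the ladder-time detour entirely. Since the step distribution of $\Vcb$ lies in the normal domain of attraction of a positive $3/4$-stable law (Proposition~\ref{annealedVolStep}), Lemma~\ref{Sum-positive-stable} gives directly
\[
\P\big(\Vcb_t>v\big)\;\lesssim\; t\,v^{-3/4}
\]
for deterministic $t$. This bound is both simpler and sharper than $\tau^{1/3}v^{-1/4}$ on the relevant range $\tau\leqslant v^{3/4}$. The paper then writes
\[
\P(\Psi>v/2)\;\leqslant\;\P(\tau>v^{3/4})\;+\;\frac{1}{v^{3/4}}\int_0^{v^{3/4}}\P(\tau>x)\,dx,
\]
and both terms are $\lesssim v^{-1/8}$ using $\P(\tau>s)\lesssim s^{-1/6}$. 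Replacing your ladder-time paragraph by this two-line argument closes the gap.
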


\begin{proof}
According to Proposition~\ref{CodingStoppingTimes}, we have $\Vc_\thetac \leqslant \Vc_{U_\Lambda}$. We decompose
\begin{equation*}
\Vc_{\U_\Lambda} = (\Vcr_{\U_\Lambda}+\Vcb_{\T_\Lambda})+(\Vcb_{\U_\Lambda}-\Vcb_{\T_\Lambda}).
\end{equation*}
which, by union bounds, yields
\begin{align*}
\P_{(1,0)}(\Vc_{\thetac} > v\,|\,\Rc>0) & \leqslant  \P_{(1,0)}(\Vcr_{\U_\Lambda}+\Vcb_{\T_\Lambda} > v/2\,|\,\Rc>0) \\
& \qquad\qquad+ \P_{(1,0)}(\Vcb_{\U_\Lambda}-\Vcb_{\T_\Lambda} > v/2\,|\,\Rc>0).
\end{align*}
We bound each term separately.
\medskip
 
\textbf{\textit{First term ${\Vcr_{\U_\Lambda}+\Vcb_{\T_\Lambda}}$.}} We use the same method as for the upper bound of the peeling time. We write
\begin{multline*}
\P_{(1,0)}\big(\Vcr_{\U_\Lambda}+\Vcb_{\T_\Lambda} > v/2\,\big|\,\Rc>0\big)\\
\begin{aligned}
& = \, \sum_{k=1}^\infty \P_{(1,0)}\big(\Vcr_{\U_k}+\Vcb_{\T_k} > v/2\hbox{ and }\Lambda = k\,\big|\,\Rc>0\big)\\
& = \sum_{k=1}^\infty \P_{(1,0)}\big(\Vcr_{\U_k}+\Vcb_{\T_k} > v/2\,\big|\,\Rc>0\big) \P_{(1,0)}\big(\Lambda = k\,\big|\,\Rc>0\big)
\end{aligned}
\end{multline*}
where we used again Corollary~\ref{VysotskyCor} for the last equality, but this time with the bivariate random walk $(X_n,Y_n) = (\Vcr_{\T_n} + \Vcb_{\U_n}, \T_n - \U_n)$ which, thanks to Proposition~\ref{Remarkableprop}, satisfies all the required assumptions. We can bound each term in the sum above with the help of Proposition~\ref{propAsympTU}. We conclude that, as expected,
\begin{align*}
\P_{(1,0)}\big(\Vcr_{\U_\Lambda}+\Vcb_{\T_\Lambda} > v/2\,\big|\,\Rc>0\big) & \,\leqslant \,\sum_{k=1}^\infty  \Big(\frac{c k}{v^{1/4}}\wedge 1\Big)\frac{1}{\sqrt{\pi}k^{3/2}}
 \lesssim \frac{1}{v^{1/8}}.
\end{align*}
\medskip

\textbf{\textit{Second term $\Vcb_{\U_\Lambda}-\Vcb_{\T_\Lambda}$.}} We first notice that, conditionally on $\Rc$, the random variable $T_\Lambda$ is a stopping time for the random walk $\Bc$. But, then, the time-shifted process $(\Vcb_{\T_\Lambda + t}-\Vcb_{\T_\Lambda})_{t\geqslant 0}$ is independent of $\Rc$ and of $(\Bc_t)_{t\leqslant T_\Lambda}$. From this, we deduce that
$\Vcb_{\U_\Lambda}-\Vcb_{\T_\Lambda}$ has the same distribution as $\Vcb_{D}$, where $D$ is random variable which is independent of $\Vcb$ and which has the same distribution as $\U_\Lambda - \T_\Lambda$. Therefore, we can write
\begin{multline}\label{upperbound_vol1}
\P_{(1,0)}\big(\Vcb_{\U_\Lambda}-\Vcb_{\T_\Lambda}>v/2\,\big|\,\Rc>0\big)\\
\begin{aligned}
&=\, \P_{(1,0)}\big(\Vcb_D>v/2\,\big|\,\Rc>0\big)\\
&\leqslant\, \P_{(1,0)}\big(D>v^{3/4}\,\big|\,R>0\big)+\P_{(1,0)}\big(\Vcb_D>v/2,\, D\leqslant v^{3/4}\,\big|\,\Rc>0\big).
\end{aligned}
\end{multline}
According to Lemma~\ref{propVolRW2} in the appendix (with $\alpha=3/2$ and $\delta=3/4$), we have $\P_{(1,0)}(\Vcb_t>v) \lesssim t v^{-3/4}$. Hence,
\begin{align*}
\P_{(1,0)}\big(\Vcb_D > v/2 \,,D\leqslant v^{3/4}\,\big|\,\Rc>0\big)&= \E_{(1,0)}\big[\ind{D\leqslant v^{3/4}}\,\P(\Vcb_D>v/2\,|\,D)\,\big|\,\Rc>0\big]\\
&\lesssim \E_{(1,0)}\big[\ind{D\leqslant v^{3/4}}\,D\,v^{-3/4}\,\big|\,\Rc>0\big]\\
&=\frac{1}{v^{3/4}}\,\int_{0}^{v^{3/4}}\P\big(D>x\big)\,dx.
\end{align*}
Next, we observe that $\smash{D \overset{\law}{=} \U_\Lambda - \T_\Lambda \leqslant \U_\Lambda + \T_\Lambda}$. Thus, according to \eqref{upboundpeel2}, we have $\P_{(1,0)}\big(D>v\,|\,\Rc>0\big)\lesssim v^{-1/6}$. As a consequence, we obtain
\begin{equation*}
\P_{(1,0)}(\Vcb_D>v,\,D\leqslant v^{3/4}\,|\,R>0) \lesssim \frac{1}{v^{3/4}}\,\int_{0}^{v^{3/4}}\frac{1}{x^{1/6}}\,dx  \lesssim  \frac{1}{v^{1/8}}.
\end{equation*}
Combining this estimate with~\eqref{upperbound_vol1} yields the matching upper bound for the second term
\begin{equation*}
\P_{(1,0)}\big(\Vcb_{\U_\Lambda}-\Vcb_{\T_\Lambda}>v/2\,\big|\,\Rc>0\big)\lesssim \frac{1}{(v^{3/4})^{1/6}}+\frac{1}{v^{1/8}}\lesssim \frac{1}{v^{1/8}}
\end{equation*}
which completes the proof of~\eqref{upperbound_volume}.
\end{proof}

\begin{lem}[\textbf{Lower bound for the volume}]
\begin{equation*}
\P_{(1,0)}\big(\Vcr_{\thetac^-} > v\,\big|\,\Rc>0\big)\gtrsim \frac{1}{v^{1/8}}.
\end{equation*}
\end{lem}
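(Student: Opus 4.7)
The approach mirrors the lower bound on the peeling time. Starting from the inclusion $\{\Lambda > n\} \subset \{\Vcr_{\thetac^-} \geqslant \Vcr_{\U_n}\}$ of Proposition~\ref{CodingStoppingTimes}, one has
$$\P_{(1,0)}\big(\Vcr_{\thetac^-} > v \, \big| \, \Rc > 0\big) \, \geqslant \, \P_{(1,0)}\big(\Lambda > n,\, \Vcr_{\U_n} > v \, \big| \, \Rc > 0\big),$$
and the plan is to pick $n = \lfloor v^{1/4} \rfloor$ and to decorrelate the two events on the right.

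The decorrelation should come from Proposition~\ref{Vysotsky} applied to the bivariate random walk $(\Vcr_{\U_k},\, \T_k - \U_k)_k$, whose i.i.d.\ increment structure is given by item~(a) of Proposition~\ref{Remarkableprop} and whose second coordinate is continuous. Once Vysotsky yields
$$\P\big(\Lambda > n,\, \Vcr_{\U_n} > v \, \big| \, \Rc > 0\big) \, = \, \P\big(\Lambda > n \, \big| \, \Rc > 0\big)\, \cdot\, \P\big(\Vcr_{\U_n} > v \, \big| \, \Rc > 0\big),$$
one combines: (i) $\P(\Lambda > n \mid \Rc > 0) \gtrsim n^{-1/2} \asymp v^{-1/8}$ from \eqref{densityUminusT}; and (ii) the claim that $\Vcr_{\U_n}$, being a sum of $n$ i.i.d.\ non-negative variables with common tail $\asymp x^{-1/4}$ (the upper bound coming from the marginal identity $\Vcr_{\U_1^-} \overset{\law}{=} \Vcb_{\T_1^-}$ of Proposition~\ref{SkipFree-Symp} combined with the tail estimate of Proposition~\ref{propVolRW2}, and the matching lower bound from the precise asymptotics of the free Boltzmann volume), satisfies $\Vcr_{\U_n}/n^4 \longrightarrow \mathcal{V}$ in law, where $\mathcal{V}$ is a strictly positive $1/4$-stable random variable. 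Hence $\P(\Vcr_{\U_n} > v \mid \Rc > 0) \to \P(\mathcal{V} > 1) > 0$ when $v = n^4$, and the product of the two factors gives the desired $v^{-1/8}$ lower bound.

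\textbf{Main obstacle.} The critical point is the symmetry needed to apply Proposition~\ref{Vysotsky}, namely $(\Vcr_{\U_1},\, \T_1 - \U_1) \overset{\law}{=} (\Vcr_{\U_1},\, \U_1 - \T_1)$. Item~(b) of Proposition~\ref{Remarkableprop} only yields symmetry of the sum $\Vcr_{\U_1} + \Vcb_{\T_1}$ paired with the swap $\T \leftrightarrow \U$, which does not directly translate into the required componentwise symmetry: swapping $\T$ and $\U$ in the joint law also swaps $\Vcr_{\U_1^-}$ with $\Vcb_{\T_1^-}$, yielding only the weaker identity $(\Vcr_{\U_1^-},\, \T_1 - \U_1) \overset{\law}{=} (\Vcb_{\T_1^-},\, \U_1 - \T_1)$. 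Bridging this gap is the main hurdle; it should be possible either via a sharpened form of the skip-free symmetry underlying Proposition~\ref{SkipFree-Symp}, or, failing that, through an alternative decomposition $\{V_n > 2v\} \subset \{\Vcr_{\U_n} > v\} \cup \{\Vcb_{\T_n} > v\}$ combined with a Harris-type correlation argument (in the spirit of the treatment of $\mathcal{E}_1$ in the proof of Proposition~\ref{PropPerimeter}) to extract the red contribution from the $V_n$-based bound \eqref{upboundV}.
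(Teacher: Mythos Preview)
Your diagnosis of the obstacle is correct, and it is fatal for the Vysotsky route. Proposition~\ref{Vysotsky} needs the marginal symmetry $(\Vcr_{\U_1},\,\T_1-\U_1)\overset{\law}{=}(\Vcr_{\U_1},\,\U_1-\T_1)$, but the path-reversal underlying Proposition~\ref{SkipFree-Symp} swaps $\T\leftrightarrow\U$ \emph{together with} $\Vcr_{\U^-}\leftrightarrow\Vcb_{\T^-}$; only the sum $\Vcr_{\U_1}+\Vcb_{\T_1}$ is invariant, not the single red piece. Your proposed workaround via $\{V_n>2v\}\subset\{\Vcr_{\U_n}>v\}\cup\{\Vcb_{\T_n}>v\}$ goes the wrong way for a lower bound, and a Harris argument does not obviously salvage it because the relevant events are not monotone in a common set of independent increments once you pass to the ladder-indexed process.

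The paper abandons Vysotsky entirely for this lemma and instead bootstraps from the peeling-time lower bound already proved. Set $t\defeq v^{3/4}$ and build an event $\mathcal{E}_1\cap\mathcal{E}_2$ on which $\thetac>2t$ and the red volume collected in $[t,2t]$ exceeds $v$. The event $\mathcal{E}_1=\{\thetac>t,\;\Bcu_t>-t^{2/3},\;\Bc_t\geq 0,\;\Rc_t>\gamma t^{2/3}\}$ is obtained from $\{\thetac>t\}$ (probability $\gtrsim t^{-1/6}=v^{-1/8}$) at the cost of a constant factor, by successive Harris inequalities: conditionally on $\Rc$, the three $\Bc$-constraints are increasing in the increments of $\Bc$; then conditionally on $\{\thetac>t\}$, the constraint $\{\Rc_t>\gamma t^{2/3}\}$ is increasing in the increments of the conditioned $\Rc$. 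Given $\mathcal{E}_1$, apply the Markov property at time $t$: on the next window of length $t$ one requires $\Rc$ to stay above $\Rc_t/2$, $\Bc$ to stay above $-t^{2/3}$ (both typical by the $3/2$-stable scaling, the first after unwinding the $h$-transform since $h(\Rc_s)/h(\Rc_t)\geq 1/2$ on this event), and $\Vcr_{2t}-\Vcr_t>v=t^{4/3}$, which is typical because the annealed volume step is in the normal domain of attraction of a $3/4$-stable law (Proposition~\ref{annealedVolStep}). Thus $\P(\mathcal{E}_2\mid\mathcal{E}_1,\Rc>0)\geq c$ and the product gives $\gtrsim v^{-1/8}$.

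In short: the missing idea is not a sharper symmetry but a change of strategy---use the peeling-time lower bound as the source of the $v^{-1/8}$ and then show that, once the process has survived to time $v^{3/4}$ with $\Rc$ high enough, accumulating red volume $v$ in the next $v^{3/4}$ steps is a typical event.
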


\begin{proof}
The approach for this lower bound differs from that used for the peeling time $\theta$. Here, we do not rely on Proposition~\ref{Vysotsky} but we construct instead an event insuring that $\Vcr_{\thetac^-} > v$ and which has the required probability. 
\medskip

Let $v>0$ and set $t \defeq v^{3/4}$. Let also $\gamma > 2$ be a constant whose exact value will be chosen later on. Consider the two events
\begin{align*}
\mathcal{E}_1 &\defeq\bigg\{\theta>t,\, \Bcu_t> -{t^{2/3}},\, \Bc_t\geqslant 0,\,\Rc_t> \gamma t^{2/3}\bigg\},\\
\mathcal{E}_2 & \defeq \bigg\{\Vcr_{2t}-\Vcr_{t}>v,\,\inf_{t\leqslant i\leqslant 2t}\Bc_i > -{t^{2/3}},\,\inf_{t\leqslant i\leqslant 2t}\Rc_i> \frac{\Rc_t}{2}\bigg\}.
\end{align*}

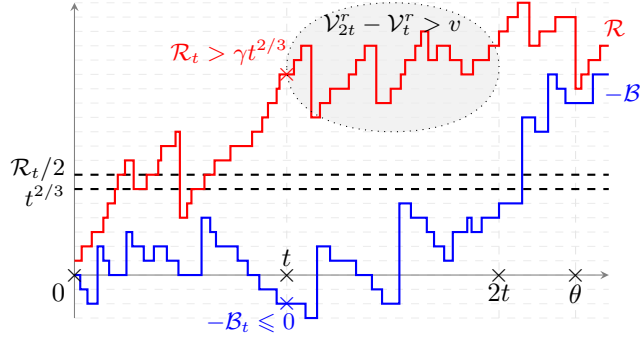
\begin{figure}[ht]
\centering
\begin{tikzpicture}[scale =0.19]
\fill[color=gray!10] (3.7,11.5) to[out=90,in=180] (11.05,16) to[out=0,in=90] (18.4,11.5) to[out=-90,in=0] (11.05,7) to[out=180,in=-90](3.7,11.5);
\foreach \k in {-6,-5,...,16} {\draw [color=gray!15,dashed] (-11,\k)-- (26,\k);}
\draw [>=stealth,->,color=gray!100] (-11,-3) -- (26,-3) ;
\draw [>=stealth,->,color=gray!100] (-11,-6) -- (-11,16) ;
\draw [color=gray!20,dashed] (3.7,-6) -- (3.7,16) ;
\draw [color=gray!20,dashed] (18.4,-6) -- (18.4,16) ;
\draw [color=gray!20,dashed] (23.7,-6) -- (23.7,16) ;
\draw [thick,dashed,color=black] (-11,3) -- (26,3) ;
\draw [color=black] (-11,2.8-0.2) node[left] {\small{$t^{2/3}$}};
\draw [thick,dashed,color=black] (-11,4) -- (26,4) ;
\draw [color=black] (-11,4.2+0.2) node[left] {\small{$\Rc_t/2$}};
\draw [thick,samples=1000,color=red]  (-11,-2)--(-10.5,-2)--(-10.5,-1)--(-9.8,-1)--(-9.8,0)--(-9,0)--(-9,1)--(-8.7,1)--(-8.7,2)--(-8.2,2)--(-8.2,3)--(-8,3)--(-8,4)--(-7.5,4)--(-7.5,5)--(-6.9,5)--(-6.9,3)--(-6,3)--(-6,4)--(-5.2,4)--(-5.2,5)--(-5,5)--(-5,6)--(-4,6)--(-4,7)--(-3.7,7)--(-3.7,1)--(-3.2,1)--(-3.2,2)--(-2.9,2)--(-2.9,3)--(-2,3)--(-2,4)--(-1.3,4)--(-1.3,5)--(-0.2,5)--(-0.2,6)--(1.3,6)--(1.3,7)--(2,7)--(2,8)--(2.4,8)--(2.4,9)--(3,9)--(3,10)--(3.2,10)--(3.2,11)--(4.2,11)-- (4.2,12) -- (4.7,12) -- (4.7,13) -- (5.4,13) -- (5.4,8) -- (6,8) -- (6,9) -- (6.7,9) -- (6.7,10) -- (8,10) -- (8,11) -- (8.5,11)--(8.5,12)-- (9.1,12)--(9.1,13)--(9.9,13)--(9.9,9)--(10.9,9)--(10.9,10) -- (11.6,10) -- (11.6,11) -- (11.9,11) -- (11.9,12) -- (12.6,12) -- (12.6,13) -- (13,13) -- (13,14) -- (13.5,14) -- (13.5,12) -- (14.1,12) --(14.1,13)--(15,13) -- (15,12) -- (15.8,12) -- (15.8,11) -- (16.6,11) -- (16.6,12) -- (17.7,12) -- (17.7,13)-- (18.4,13)--(18.4,14)--(19,14)--(19,15)--(19.7,15)--(19.7,16)--(20.5,16)-- (20.5,13)--(21.2,13)--(21.2,14)--(22.3,14)--(22.3,15)--(23.7,15)--(23.7,10)--
(24.1,10)--(24.1,11)--(24.6,11)--(24.6,12)--(25.4,12)--(25.4,13)--(26,13);
\draw [thick,samples=1000,color=blue] 
(-11,-3)--(-10.6,-3) -- (-10.6,-4) -- (-10.1,-4) -- (-10.1,-5) -- (-9.4,-5) -- (-9.4,-1) -- (-9,-1) -- (-9,-2) -- (-8.6,-2) -- (-8.6,-3) -- (-7.4,-3) -- (-7.4,0) -- (-7,0) -- (-7,-1) -- (-6.2,-1) -- (-6.2,-2) -- (-5.5,-2) -- (-5.5,-1) -- (-4.6,-1) -- (-4.6,-2) -- (-3.8,-2) -- (-3.8,-3) -- (-2.2,-3) -- (-2.2,1) -- (-1.6,1) -- (-1.6,0) -- (-1.2,0) -- (-1.2,-1) -- (0.3,-1) -- (0.3,-2) -- (1,-2) -- (1,-3) -- (1.8,-3) -- (1.8,-4) -- (3,-4) --(3,-5) -- (5,-5) -- (5,-6) -- (5.8,-6) -- (5.8,-1) -- (6.7,-1) -- (6.7,-2) -- (8.5,-2) -- (8.5,-3) -- (9.6,-3) -- (9.6,-4) -- (10.3,-4) -- (10.3,-5) -- (11.5,-5) -- (11.5,2) -- (12.3,2) -- (12.3,1) -- (13.4,1) -- (13.4,0) --(13.9,0)-- (13.9,-1)--(14.6,-1)--(14.6,-2)--(15.2,-2)--(15.2,0)--(16.2,0) -- (16.2,1) -- (16.5,1) -- (16.5,0) --(17.2,0)--(17.2,1)-- (18.4,1) -- (18.4,2) --(20,2) -- (20,8) -- (20.9,8) -- (20.9,7) -- (21.8,7) -- (21.8,11) -- (22.4,11) -- (22.4,10) -- (23,10) -- (23,9) -- (24.9,9) -- (24.9,11) -- (26,11) ;
\draw [thick,color=red] (25,13) node[above right] {\small{$\Rc$}};
\draw [thick,color=blue] (25,11) node[below right] {\small{$-\Bc$}};
\draw (-11,-3) node[below left] {$0$};
\draw (-11,-3) node {$\times$};
\draw (3.7,-3) node[above] {$t$};
\draw (3.7,-3) node {$\times$};
\draw (18.4,-3) node[below] {$2t$};
\draw (18.4,-3) node {$\times$};
\draw (3.7-4,11) node[color=red,above] {\small{$\Rc_{t}> \gamma t^{2/3}$}};
\draw (3.7,11) node[color=red] {$\times$};
\draw (3.7-2.5,-5) node[color=blue,below] {\small{$-\Bc_{t}\leqslant 0$}};
\draw (3.7,-5) node[color=blue] {$\times$};
\draw (23.7,-3) node[below] {$\theta$} ;
\draw (23.7,-3) node {$\times$} ;
\draw[dotted] (3.7,11.5) to[out=90,in=180] (11.05,16) to[out=0,in=90] (18.4,11.5) to[out=-90,in=0] (11.05,7) to[out=180,in=-90](3.7,11.5);
\draw (11,14.5) node {\small{$\Vc^r_{2t}-\Vc^r_t>v$}};
\end{tikzpicture}
\caption{An example of the event $\mathcal{E}_1\cap \mathcal{E}_2$. The volume generated by the red steps in the gray zone is bigger than $v$.}
\end{figure}

We observe that, on $\mathcal{E}_1 \cap \mathcal{E}_2$, we must have $\thetac > 2t$. Indeed, we first have $\thetac > t$ because of $\mathcal{E}_1$ but, then, $\mathcal{E}_2$ prevents $\Rc$ from going below
\[\Rc_t/2 > \frac{\gamma}{2}\,t^{2/3} > t^{2/3} > -\Bcu_{2t}\]
during the time interval $[t,2t]$. This insures that $\thetac > 2t$. Moreover, $\mathcal{E}_2$ also requires that the volume accumulated during the time interval $[t,2t]$ is greater than $v$. Therefore we conclude that $\mathcal{E}_1 \cap \mathcal{E}_2 \subset \{ \Vcr_{\theta^-} > v \}$ which yields the lower bound
\begin{equation*}
\P_{(1,0)}(\Vcr_{\theta^-} > v\,|\,\Rc>0)\geqslant\P_{(1,0)}(\mathcal E_1\,|\,\Rc>0)\,\P_{(1,0)}(\mathcal E_2\,|\,\mathcal E_1,\,\Rc>0).
\end{equation*}
We bound the probability of each event separately. 
\medskip

\textbf{\textit{Event $\mathcal{E}_1$.}} We prove that
\begin{equation}\label{lowboundVolE1}
\P_{(1,0)}(\mathcal E_1\,|\,\Rc>0) \gtrsim \frac{1}{v^{1/8}}.
\end{equation}
First, notice that, conditionally on $\Rc$, the events
$\{\theta>t\}$, $\{\Bcu_t> -{t^{2/3}}\}$ and  $\{\Bc_t\geqslant 0\}$ are all increasing with respect to the increments of $\Bc$. Thus, Harris inequality stated in Proposition~\ref{HarrisProp}\footnote{We use here a version for jump processes instead of discrete time processes but the adaptation is straightforward so the details are omitted} shows that
\begin{align*}
\P_{(1,0)}\big(\mathcal E_1\,|\,\Rc>0\big)& \geqslant \E_{(1,0)}\Big[\P_{(1,0)}(\theta>t,\,\Rc_t> \gamma t^{2/3}\,|\,\Rc)\,\P_{(1,0)}(\Bc_t\geqslant 0\,|\,\Rc)\\
&\qquad\qquad\qquad\qquad\qquad\qquad\qquad\P_{(1,0)}(\Bcu_t> -{t^{2/3}}\,|\,\Rc)\,\Big|\,\Rc>0\Big]\\
& = \P_{(1,0)}(\theta>t,\,\Rc_t> \gamma t^{2/3}\,|\,\Rc > 0)\,\P_{0}(\Bc_t\geqslant 0)\,\P_{0}(\Bcu_t> -{t^{2/3}}),
\end{align*}
where we used the fact that $\Bc$ and $\Rc$ are independent for the last equality. Now, since $\Bc$ lies in the normal domain of attraction of a stable law of index $3/2$, the probabilities $\P_{0}(\Bc_t\geqslant 0)$ and $\P_{0}(\Bcu_t> -{t^{2/3}})$ both converges to strictly positive constants. This means that
\[\P_{(1,0)}(\mathcal E_1\,|\,\Rc>0) \gtrsim  \P_{(1,0)}(\theta>t,\,\Rc_t> \gamma t^{2/3}\,|\,\Rc>0).\]
Applying again Harris inequality, but this time with respect to the increment of $\Rc$ conditioned to stay positive, we deduce that
\[\P_{(1,0)}(\mathcal E_1\,|\,\Rc>0)\gtrsim \P_{(1,0)}(\theta>t\,|\,\Rc>0)\,\P_{1}(\Rc_t> \gamma t^{2/3}).\]
According to~\eqref{lowerbound_explo}, we have $\smash{\P_{(1,0)}(\theta>t\,|\,\Rc>0) \gtrsim t^{-1/6} = v^{-1/8}}$. On the other hand,
$\smash{\P_{1}(\Rc_t> \gamma t^{2/3})}$ is bounded away from $0$, uniformly in $t$ because $\Rc$ is also in the normal domain of attraction of a stable law of index $3/2$ (the lower bound depends on $\gamma$ but is always strictly positive). Putting everything together, we conclude that~\eqref{lowboundVolE1} holds.
\medskip

\textbf{\textit{Event $\mathcal{E}_2$.}} We prove that, conditionally on $\mathcal{E}_1$, the event $\mathcal{E}_2$ is typical, \ie
\begin{equation}\label{lowboundVolE2}
\P_{(1,0)}(\mathcal E_2\,|\,\mathcal E_1,\Rc>0)  > c,
\end{equation}
where $c>0$ does not depend on $v$. Using Markov property of $(\Rc,\Bc)$ at time $t$ and recalling also that $\Rc$ and $\Bc$ are independent, we find that
\begin{align*}
\P_{(1,0)}(\mathcal E_2\,|\,\mathcal E_1,\Rc>0)&\geqslant\inf_{b\geqslant 0,\,r\geqslant \gamma t^{2/3}}\,\P_{(r,b)}\big(\Vcr_{t}>v,\,\Bcu_t> -{t^{2/3}},\,\Rcu_t>r/2\,\big|\,\Rc>0\big)\\
&=\inf_{r\geqslant \gamma t^{2/3}}\,\P_{r}\big(\Vcr_{t}>v,\,\Rcu_t>r/2\,\big|\,\Rc>0\big)\inf_{b\geqslant 0} \,\P_{b}\big(\Bcu_t> -{t^{2/3}}\big).
\end{align*}
We already noticed that
\[\inf_{t\geqslant 0}\inf_{b\geqslant 0} \P_{b}(\Bcu_t> -{t^{2/3}})  =  \inf_{t\geqslant 0} \P_{0}(\Bcu_t> -{t^{2/3}}) > 0\]
so that
\[\P_{(1,0)}\big(\mathcal E_2\,\big|\,\mathcal E_1,\Rc>0\big)\gtrsim \P_{r}\big(\Vcr_{t}>v,\,\Rcu_t>r/2\,\big|\,\Rc>0\big).\]
We rewrite the probability on the right hand side via the h-transform
\begin{align}
\nonumber\P_{r}(\Vcr_{t}>v,\,\Rcu_t>r/2\,|\,\Rc>0) & =  \E_r\bigg[\ind{\Vcr_{t}>v,\,\Rcu_t>r/2}\frac{h(\Rc_t)}{h(r)}\bigg]\\
\nonumber& \geqslant   \frac{1}{2}\P_r(\Vcr_{t}>v,\,\Rcu_t>r/2)\\
\label{lowboundVol_2}& \geqslant   \frac{1}{2}\big(\P_{r}(\Vcr_{t}>v)-\P_r(\Rcu_t\leqslant r/2)\big),
\end{align}
where we used that $h$ is non-decreasing and sub-additive to lower bound $h(\Rc_t)/h(r)$ by $1/2$. Now, we notice that $\P_{r}(\Vcr_{t}>v)$ does not depend on the starting point $r$ of $\Rc$ since the volume depend on $\Rc$ only through its increments, which are i.i.d.~under~$\P$. Moreover, according to Proposition \ref{annealedVolStep} of the appendix, the step distribution of $\Vcr$ under $\P$ is in the normal domain of attraction of a positive stable law of index $3/4$. Therefore, $\P_{r}(\Vcr_{t}>v) = \P_{r}(\Vcr_{t}> t^{4/3})$ remains bounded away from $0$ as $t$ increase. On the other hand, for $r\geqslant \gamma t^{2/3}$, we have, according to item \ref{StableRW-CV} of Proposition \ref{propStableRW} of the appendix, 
\[\P_r(\Rcu_t\leqslant r/2)\leqslant \P_0\Big(\Rcu_t\leqslant -\frac{\gamma}{2} t^{2/3}\Big) \underset{t\to\infty}{\longrightarrow} \P\left(\underline{\mathcal{X}}_t \leq -\frac{\gamma}{2}\right),\] 
where $\mathcal{X}$ is a spectrally negative strictly stable process of index $3/2$.  Thus, the limit above decreases to $0$ as $\gamma$ increases to infinity. This means that, choosing $\gamma$ large enough, we can lower bound~\eqref{lowboundVol_2} uniformly for all large $t$ and $r\geqslant \gamma t^{2/3}$. This completes the proof of~\eqref{lowboundVolE2}.
\end{proof}

\begin{rmk}\label{remarkRealVolume}
Now that the proof of Theorem \ref{mainTheo} is complete, we can point out the changes required to study the red cluster $\C$ instead of its hull $\Hl$. Clearly, not all quantities are well adapted to the peeling procedure. For example, the method seems unfit to estimate the diameter of $\C$. On the other hand, controlling the number $|\C|$ of red sites in the cluster of the origin should be possible provided that we could estimate the typical size of a cluster in a free Boltzmann triangulation.\medskip

More precisely, consider a colored free Boltzmann triangulation of the $(m + 1)$-gon whose boundary is composed only of red vertices. Denote by $W^{(m)}$ the size of the red cluster touching the outer boundary, with the convention 
$W^{(m)} = 1$ for $m< 1$. Suppose that the two following estimates hold for some $\beta \in (3/2,2]$:
\[\textbf{(1)}\hspace{1cm}\P\big(W^{(-\xi)}\geq x\big)\underset{x\to+\infty}{\sim}cx^{-\alpha/\beta}\]
where $\alpha = 3/2$ and $\xi$ is random variable with distribution given by \eqref{defrwpk} which is independent of $W$.  
\[\textbf{(2)}\hspace{1cm}\sup_{j\geqslant 1}\,\frac{\E[W^{(j)}]}{j^{\beta}}<+\infty.\]
Then, following the strategy used in this paper for studying $|\Hl|$, we can prove that
\[\Pg(|\C|>n)\asymp n^{-\alpha/(6 \beta)}.\]
While estimates \textbf{(1)} and \textbf{(2)} are not yet available, we learned from Nicolas Curien that they may hold for $\beta = 7/4$ which suggest the following conjecture:
\begin{conj} The cluster $\C$  for a critical site percolation on the UIPT satisfies \[\Pg(|\C|>n)\asymp n^{-1/7}.\]
\end{conj}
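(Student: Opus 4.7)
The plan is to extend the full machinery of Sections~2--5 from $|\Hl|$ to $|\C|$, taking the estimates~\textbf{(1)} and~\textbf{(2)} with $\beta=7/4$ as the key analytic input. Introduce an independent family $(\tz{j}{i})_{i\geqslant 1,\,j\in\{-1,1,2,\ldots\}}$ with $\tz{j}{1}\overset{\law}{=}W^{(j)}$ and $\tz{-1}{1}=1$, and set
\[
\widetilde{\Vg}_{n}\defeq\sum_{k=1}^{n}\tz{\Sd_{k-1}-\Sd_{k}}{k},\qquad \widetilde{\Vgr}_{n}\defeq\sum_{k=1}^{n}\eta_{k}\,\tz{\Sd_{k-1}-\Sd_{k}}{k}.
\]
The peeling rule of Section~2 guarantees that every downward step swallows a free Boltzmann triangulation whose boundary is entirely red on the peeled side, so that its contribution to $|\C|$ is distributed as $W^{(d-1)}$ and independent of the past. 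Collecting these contributions together with the $O(1)$ red vertices discovered at unit steps, one obtains the sandwich
\[
\widetilde{\Vgr}_{\Thetag-1}\,\leqslant\,|\C|\,\leqslant\,\widetilde{\Vg}_{\Thetag}+\Thetag,
\]
which plays for $|\C|$ exactly the role that $\Vgr_{\Thetag-1}\leqslant|\Hl|\leqslant\Vg_{\Thetag}$ played in Section~2; the additive $\Thetag$ term is negligible thanks to Theorem~\ref{theoExploTime}.

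From this point, the reductions of Section~4 apply verbatim: the $h$-transform from $\Pg$ to $\P(\,\cdot\,|\,\Rc>0)$, the passage to continuous time, and the ensuing decoupling of $\Rc$ and $\Bc$ go through unchanged because the family $(\tz{j}{i})$ is independent of $(\Sd,\eta,\NP)$. The only quantitative input that must be recomputed is the tail of $\widetilde{\Vcb}_{T_{1}}$, the analogue of the $x^{-1/4}$ bound appearing in Proposition~\ref{propAsympTU}. Writing the scaling heuristic $\widetilde{\Vcb}_{T_{1}}\asymp T_{1}^{\beta/\alpha}$ and combining it with $\P(T_{1}>t)\asymp t^{-1/3}$, assumption~\textbf{(1)} with $\beta=7/4$ and $\alpha=3/2$ yields
\[
\P_{(1,0)}\bigl(\widetilde{\Vcb}_{T_{1}}>x\,\big|\,\Rc>0\bigr)\,\lesssim\,x^{-2/7}.
\]
Plugging this into the Sparre Andersen decomposition along $\Lambda$ in the proof of Lemma~\ref{LemUpBoundPeel} and optimizing the truncation at $k^{\star}\sim x^{2/7}$, both halves of the sum contribute at order $x^{-1/7}$, which gives the upper bound $\Pg(|\C|>v)\lesssim v^{-1/7}$. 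The matching lower bound is produced by the same favourable-event construction as in the last lemma of Section~5, using Harris' inequality together with assumption~\textbf{(2)} to control the mean of $\widetilde{\Vg}$ on good events.

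The main obstacle is of course establishing~\textbf{(1)} and~\textbf{(2)} for $\beta=7/4$. A natural attack is to run a secondary peeling inside a free Boltzmann triangulation of the $(j+1)$-gon with all-red boundary, tracking only the red cluster touching that boundary: this yields a finite Markov chain whose transition kernel is an explicit reweighting of the one of Section~2 by the partition functions of Boltzmann disks, and whose absorption-time distribution one can in principle analyze via generating-function and Tauberian arguments. The delicate point is to extract the exponent $\beta=7/4$ rigorously; heuristically it should emerge from the interplay between the $3/2$-stable fluctuations of the boundary length and the volume exponent $1/4$ of a critical cluster on the UIHPT from~\cite{AC15}, but turning this heuristic into a proof is precisely the missing ingredient and the reason the statement is phrased as a conjecture rather than a theorem.
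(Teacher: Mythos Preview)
The statement is labelled a conjecture, and the paper does not prove it. What the paper does provide is the remark immediately preceding the conjecture, which says precisely what you say: if estimates \textbf{(1)} and \textbf{(2)} hold for some $\beta\in(3/2,2]$, then the machinery of Sections~2--5 can be rerun with $W^{(m)}$ in place of $\vv^{(m)}$ to obtain $\Pg(|\C|>n)\asymp n^{-\alpha/(6\beta)}$, and the value $\beta=7/4$ (suggested to the authors by Curien) gives $1/7$. Your sketch is a faithful expansion of that remark, with the correct intermediate exponent $x^{-2/7}$ for the analogue of Proposition~\ref{propAsympTU} (indeed $\delta=\alpha/\beta=6/7$ and $\delta(1-\alpha)/\alpha=-2/7$) and the correct balancing in the Sparre Andersen sum.

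You also correctly identify the actual obstruction: estimates \textbf{(1)} and \textbf{(2)} are not established, and the paper says so explicitly. So there is no proof in the paper to compare your proposal against; your write-up and the paper's remark are in agreement, and both stop at the same missing ingredient.
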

\end{rmk}

\addcontentsline{toc}{section}{Appendix}
\renewcommand{\thesubsection}{\Alph{subsection}}
\renewcommand{\thetheo}{\Alph{subsection}.\arabic{theo}}
\renewcommand{\thesubsubsection}{\roman{subsubsection})}
\setcounter{subsection}{0}
\setcounter{section}{0}
\section*{Appendix}

\subsection{Bivariate random walk}\label{BivRW}
\setcounter{theo}{0}

In all this section, we denote by $(\xi_n,\vv_n)_{n\geqslant 1}$ a sequence of i.i.d.~random variables taking values in $\Z\times \R^+$ (the random variables $\xi_n$ and $\vv_n$, $n\geq 1$, are not assumed to be independent). We also denote by $(N_t)_{t\geqslant 0}$ a Poisson process with unit intensity, independent of the previous sequence. We define the continuous-time bivariate random walk $(\Sc_t,\Vc_t)_{t\geq 0}$ starting from $(0,0)$ by
\begin{equation*}
\Sc_t\defeq \sum_{n\leqslant N_t}\xi_n\qquad\mbox{and}\qquad \Vc_t\defeq\sum_{n\leqslant N_t}\vv_n.
\end{equation*}
We make the following additional assumptions:
\begin{enumerate}
\item[\textbf{(a)}] The random walk $\Sc$ is centered and right-continuous, \ie
\begin{equation*}
\E[\xi_1]=0\qquad\mbox{and}\qquad \mathrm{supp}(\xi_1) \subset \{ \ldots,-3,-2,-1,0,1\}.
\end{equation*}
\item[\textbf{(b)}] The $\xi_i$'s are in the normal domain of attraction of a $\alpha$-stable law, for some $\alpha\in (1,2)$, \ie there exist a constant $c >0$ such that
\begin{equation}\label{asumpt2}
\P(\xi_1< -k)\sim c\,k^{-\alpha}\qquad\mbox{as $k\to+\infty$.}
\end{equation}
\item[\textbf{(c)}] The $\vv_i$'s are non negative and lie in the normal domain of attraction of a $\delta$-stable law for some $\delta \in (0,1)$, \ie 
there exist a constant $d >0$ such that
\begin{equation}\label{asumpt3}
\P(\vv_1> x )\sim d\, x^{-\delta}\qquad\mbox{as $x\to+\infty$.}
\end{equation}
\item[\textbf{(d)}] We have 
\begin{equation}\label{asumpt4}
\sup_{j\leqslant 1}\,\frac{\E[\vv_1\,|\,\xi_1=j]}{1+|j|^{\alpha/\delta}}<+\infty.
\end{equation}
\end{enumerate}
Let us keep in mind that we will be particularly interested in the case where $\alpha=3/2$ and $\delta=3/4$: we have then $\alpha/\delta=2$, which matches the bivariate random walk obtained by considering the random walk associated with the peeling process together with the volume generated by the free Boltzmann triangulations discovered during the peeling. Indeed, for this bivariate random walk, Assumption~\textbf{(a)} follows from~\eqref{defrwpk}, Assumption~\textbf{(b)} follows from~\eqref{asymppk}, and Assumption~\textbf{(d)} follows from~\eqref{formulaVolumeExpectation}. Finally, Assumption~\textbf{(c)} is a consequence of the explicit formula~\eqref{formulaVolume} and is proved in Proposition~\ref{annealedVolStep}.\medskip

We define the strict descending and ascending ladder times of $\Sc$ by
\[
\Tdown \defeq \inf\,\{\,t>  0: \Sc_t<0\,\} \qquad \hbox{and} \qquad \Tup \defeq \inf\,\{\,t>  0: \Sc_t > 0\,\}.
\]
Since $\Sc$ is oscillating, both ladder times are well defined a.s. We start by recalling classical results concerning fluctuations of random walks in the domain of attraction of a stable law. 
\begin{prop}
\begin{enumerate}
\item\label{StableRW-CV} The sequence of processes $(\Sc_{ nt}/n^{1/\alpha})_{t\in[0,1]}$ converges in law, in the Skorokhod space, for the $J_1$ topology, towards a spectrally negative strictly stable process $(\mathcal{X}_{t})_{t\in[0,1]}$ of index $\alpha$. In particular, for any $t\in (0,1]$ and $x>0$, we have
\begin{equation}\label{contminfun}
\lim_{n\to\infty} \,\P\left(\frac{1}{n^{1/\alpha}}\inf_{s\leq t}\Sc_{ns} <-x \right) = \P\left(\inf_{s\leq t}\mathcal{X}_s<-x\right) \in (0,1).
\end{equation}
Similarly $(\Vc_{nt}/n^{1/\delta})_{t\in[0,1]}$ converges in the Skorokhod space towards a positive strictly stable process $(\mathcal{Y}_{t})_{t\in[0,1]}$ of index $\delta$.
\item \label{StableRW-LadderTime} There exist constants $c_1, c_2>0$ such that, when $x$ goes to $+\infty$,
\[\P(\Tdown > x)\sim \frac{c_1}{x^{1-1/\alpha}}\qquad \mbox{and}\qquad \P(\Tup > x)\sim \frac{c_2}{x^{1/\alpha}}.\]
\end{enumerate}
\label{propStableRW}
\end{prop}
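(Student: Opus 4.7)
The plan is to derive both statements from classical results: a functional invariance principle for~\ref{StableRW-CV} and Rogozin's theorem together with Zolotarev's formula for~\ref{StableRW-LadderTime}.

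For~\ref{StableRW-CV}, I would first work with the discrete skeleton $\Sd_n = \xi_1 + \cdots + \xi_n$. By assumptions (a) and (b), $\Sd$ is centered with jumps in the normal domain of attraction of a spectrally negative strictly $\alpha$-stable law, so Skorokhod's invariance principle gives $n^{-1/\alpha}(\Sd_{\lfloor nt\rfloor})_{t\in[0,1]} \Rightarrow (\mathcal{X}_t)_{t\in[0,1]}$ in the $J_1$-topology. Since the Poisson process $N$ is independent of the $\xi_n$'s and satisfies $N_{nt}/n \to t$ uniformly in $t\in[0,1]$, Whitt's continuity theorem for composition in the Skorokhod space transfers the convergence to $\Sc_{nt}/n^{1/\alpha}$. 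For~\eqref{contminfun}, note that $\mathcal{X}$ is a.s.\ continuous at the fixed time $t$, so the running infimum $\omega \mapsto \inf_{s\leq t}\omega(s)$ is continuous at $\mathcal{X}$-almost every trajectory, and the distribution of $\inf_{s\leq t}\mathcal{X}_s$ has no atom. Strict positivity and the fact that the limiting probability is $<1$ follow from self-similarity together with the oscillating (non-monotone) character of $\mathcal{X}$. The convergence for $\Vc$ is entirely analogous using (c).

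For~\ref{StableRW-LadderTime}, I would invoke Rogozin's theorem: when $\Sd$ is attracted to a stable law with positivity parameter $\rho := \lim_n n^{-1}\sum_{k=1}^n \P(\Sd_k > 0) = \P(\mathcal{X}_1 > 0)$, the discrete strict ascending and descending ladder epochs $T^\uparrow_{\mathrm{disc}}$ and $T^\downarrow_{\mathrm{disc}}$ satisfy
\[
\P(T^\uparrow_{\mathrm{disc}} > n) \sim c\, n^{-\rho}\qquad\text{and}\qquad \P(T^\downarrow_{\mathrm{disc}} > n) \sim c'\, n^{-(1-\rho)}.
\]
For a spectrally negative strictly $\alpha$-stable process with $\alpha\in(1,2)$, Zolotarev's formula yields $\rho = 1/\alpha$ (a sanity check at $\alpha = 3/2$: Zolotarev gives $\rho = \tfrac{1}{2} + \tfrac{1}{\pi \alpha}\arctan\!\big(\beta\tan(\pi\alpha/2)\big) = \tfrac{1}{2} + \tfrac{2}{3\pi}\arctan 1 = 2/3$). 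Substituting produces the exponents $1/\alpha$ and $1-1/\alpha$ claimed in the proposition.

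Finally, I would transfer these discrete-time equivalents to the continuous-time walk $\Sc$. By construction $\Tup = \sigma_{T^\uparrow_{\mathrm{disc}}}$ and $\Tdown = \sigma_{T^\downarrow_{\mathrm{disc}}}$, where $\sigma_n$ is the $n$-th jump time of the Poisson process $N$ (independent of the $\xi_n$'s). Gaussian concentration $|\sigma_n - n| = O(\sqrt{n})$ combined with the regular variation of the discrete tails is enough to carry the asymptotic $\sim c\, n^{-1/\alpha}$ over to $\Tup$, and similarly for $\Tdown$; the constants $c_1$ and $c_2$ in the statement are precisely those inherited from Rogozin's theorem. No genuine obstacle arises: the entire argument is a compilation of standard results, and the only mild care needed is in the final concentration step to preserve the sharp $\sim$ constants under the Poissonization.
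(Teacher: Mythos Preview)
Your argument for item~\ref{StableRW-CV} is essentially the paper's: invoke the functional invariance principle (the paper cites it directly as Donsker's theorem for L\'evy processes, Theorem~16.14 of~\cite{Kall02}, rather than separating the discrete skeleton from the Poisson time change) and then use continuity of the running-infimum functional.

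For item~\ref{StableRW-LadderTime} your route differs slightly from the paper's. The paper first obtains the tail of the \emph{descending} ladder time $\Tdown$ from Doney's exact-asymptotic result~\cite{Don82}, and then deduces the tail of $\Tup$ via the Sparre--Andersen formula linking the two generating functions. You instead compute the positivity parameter $\rho=1/\alpha$ through Zolotarev's formula and appeal to Rogozin for both ladder times at once. Both paths arrive at the same exponents; the paper's has the advantage that Doney's theorem delivers the sharp equivalence $\sim c\,x^{-(1-1/\alpha)}$ directly, whereas Rogozin's theorem in its original form only yields regular variation $\P(\Tup>n)=n^{-\rho}L(n)$ with $L$ slowly varying. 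To pass from that to the precise $\sim c\,n^{-\rho}$ required by the statement (normal domain of attraction), one still needs a refinement of Doney's type. Your strategy is sound, but the citation you give does not quite deliver the exact asymptotic; the paper's choice of reference closes that small gap.
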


\begin{proof} 
The convergences in the Skorokod space stated in item \ref{StableRW-CV} is simply Donsker's theorem for Lévy processes (\emph{c.f.} for instance Theorem~16.14 of \cite{Kall02}). The limit~\eqref{contminfun} follows from the fact that the functional $f\mapsto \min_{s\leq t}f(s)$ is almost surely continuous with respect to the law on the trajectory of $\mathcal{X}$. Moreover, the support of any stable law with index $\alpha>1$ is necessarily the whole of $\R$ hence this limit is strictly between $0$ and $1$. The tail distribution of $\Tdown$ stated in item $2$ follow from Theorem~1 of Doney~\cite{Don82} (together with an easy change of time given by the Poisson process $N$). In turn, this estimate combined with Sparre-Andersen's formula (\emph{c.f.} Feller~\cite{Fel71} Chapter~XII.7.) insures that $\Tup$ is also in the domain of normal attraction of a positive stable law, this time with index $1/\alpha$. 
\end{proof}

\begin{lem} Let $\gamma\in (0,1)$. Suppose that $(Y_i)_{i\geqslant 1}$ is a sequence of i.i.d.~positive random variables such that $\P(Y_1>x) \lesssim x^{-\gamma}$ as $x$ goes to $+\infty$. Then, there exists a constant $c < \infty$ such that, for all $n\geqslant 1$ and for all $x>0$,
\[\P(Y_1+\dots+Y_n > x)\leqslant c n x^{-\gamma}.\]
 \label{Sum-positive-stable}
\end{lem}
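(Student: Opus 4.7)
The plan is to use a truncation argument. The naive union bound $\P(Y_1+\cdots+Y_n>x)\leq n\P(Y_1>x/n)\lesssim n^{1+\gamma}x^{-\gamma}$ is too weak, losing an extra factor $n^{\gamma}$, so we need to be slightly more careful by separating the contribution of the largest jump from that of the rest.

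First I would observe that when $cnx^{-\gamma}\geq 1$, the bound is trivial (just choose $c\geq 1$), so we may restrict to $x$ large (in particular $x\geq 1$). Let $C$ be the constant from the hypothesis, so that $\P(Y_1>t)\leq Ct^{-\gamma}$ for all $t\geq 1$. Define the truncated variables $Y_i^{(x)}\defeq Y_i\wedge x$ and decompose
\[
\P\!\Big(\sum_{i=1}^n Y_i>x\Big)\leq \P\!\Big(\max_{i\leq n}Y_i>x\Big)+\P\!\Big(\sum_{i=1}^n Y_i^{(x)}>x\Big).
\]

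The first term is bounded via a union bound:
\[
\P\!\Big(\max_{i\leq n}Y_i>x\Big)\leq n\,\P(Y_1>x)\leq Cnx^{-\gamma}.
\]
For the second term, I would apply Markov's inequality after computing
\[
\E\big[Y_1^{(x)}\big]=\int_0^x\P(Y_1>t)\,dt\leq 1+\int_1^x Ct^{-\gamma}\,dt\leq 1+\frac{C\,x^{1-\gamma}}{1-\gamma}\lesssim x^{1-\gamma},
\]
where we used $\gamma<1$ in a crucial way. This gives
\[
\P\!\Big(\sum_{i=1}^n Y_i^{(x)}>x\Big)\leq \frac{n\,\E[Y_1^{(x)}]}{x}\lesssim nx^{-\gamma}.
\]
Summing the two contributions yields $\P(Y_1+\cdots+Y_n>x)\leq c nx^{-\gamma}$ for a suitable constant $c$, uniformly in $n\geq 1$ and $x>0$, which concludes the proof.

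There is no real obstacle here, as this is a classical ``one big jump'' estimate for subexponential-type distributions; the only point to watch is that the assumption $\gamma<1$ is used to guarantee that the truncated mean grows like $x^{1-\gamma}$ (and in particular is finite), which is exactly what is needed to make Markov's inequality yield the same order $nx^{-\gamma}$ as the union bound on the maximum.
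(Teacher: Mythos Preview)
Your truncation argument is correct and complete. The paper, however, takes a different route: it observes that since $\P(Y_1>x)\lesssim x^{-\gamma}$ and a positive strictly $\gamma$-stable variable $X$ satisfies $\P(X>x)\sim c\,x^{-\gamma}$, one can stochastically dominate $Y_1$ by $a+bX$ for suitable constants $a,b$. Then the sum $Y_1+\cdots+Y_n$ is dominated by $na+b(X_1+\cdots+X_n)$, and the stable scaling $X_1+\cdots+X_n\overset{\law}{=}n^{1/\gamma}X$ yields
\[
\P(Y_1+\cdots+Y_n>x)\leq \P\!\Big(X>\tfrac{x-na}{bn^{1/\gamma}}\Big)\leq Cnx^{-\gamma}.
\]
The paper's coupling is slicker but relies on the existence and exact scaling of positive stable laws; your approach is more elementary and self-contained, using only the union bound on the maximum and Markov's inequality on the truncated mean. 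Both exploit $\gamma<1$ in the same essential way (for you, to bound the truncated expectation by $x^{1-\gamma}$; for the paper, so that a positive stable law of that index exists).
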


\begin{proof} We use a coupling argument. Let $X$ denote a positive strictly stable random variable with index $\gamma$. We have that $\P(X>x) \sim c x^{-\gamma}$ for some $c > 0$. Thus, the assumption on the tail of the $Y_1$ shows that we can choose $a$ and $b$ large enough such that $Y_1$ is stochastically dominated by $a + b X$. Denoting by $(X_i)_{i\geqslant 1}$ a sequence of i.i.d.~random variable with the same law as $X$, we conclude that
\begin{align*}
\P(Y_1+\dots+Y_n>x)&\leqslant \P\big(na+b(X_1+\dots+X_n)>x\big)\\
&=\P\left(X>\frac{x-na}{bn^{1/\gamma}}\right)\leqslant Cnx^{-\gamma}.
\end{align*}
The above upper bound holds uniformly in $n\geq 1$ and $x>0$, provided that $C$ is chosen large enough. 
\end{proof}

The hypothesis that $\xi$ is skip free has many important consequences. One of them being an explicit path decomposition for an excursion of the random walk $\Sc$ which, in turn, leads to several remarkable identities. We gather here some of these results. They are classical but seem to be scattered throughout the existing literature.
\medskip

We need some additional notations. We define the size $L$ of the jump at time $\Tdown$ and its undershoot $H$ by
\begin{equation}\label{defHandL}
L \defeq \Sc_{{\Tdown}\scriptscriptstyle{-}} -\Sc_{\Tdown} \qquad\hbox{and}\qquad H \defeq -\Sc_{\Tdown}.
\end{equation}
Let now $(\Sh_t,\Vh_t)_{t\geqslant 0}$ denote a new Markov process, independent of $(\Sc,\Vc)$ and whose law is the same as that of $(\Sc  +1,\Vc)$ under the conditional measure $\P(\,\cdot\,|\, \Sc  + 1 > 0)$, \ie such that\footnote{As explained in section \ref{SectionEncoding}, the law of $(\Sh_t,\Vh_t)_{t\geqslant 0}$ can also be obtained from that of $(\Sc,\Vc)$ (starting from $(1,0)$) by a change of measure using the h-transform $h$ of $\Sc$, where $h$ is harmonic for $\Sc$ on $\llbracket 1,+\infty \llbracket$ and zero outside.}
\[\law(\Sh,\Vh) = \law \big( (\Sc + 1,\Vc) \, | \, \Sc +1 > 0\big).\]
In particular, this process starts from $(\Sh_0,\Vh_0) = (1,0)$ and $\Sh$ has the law of the random walk $\Sc$ starting from $1$ and conditioned to stay positive forever. Thus, it is a transient Markov process that diverges to $+\infty$. Moreover, since $\Sc$ is right-continuous, so is $\Sh$ and thus the last passage times at any heights are well defined. Therefore we can set
\[U \defeq \sup\,\{\,t>  0: \Sh_{t}=H\,\}.\]

\begin{prop}  \label{SkipFree-Symp}
Let $\smash{\big(\widetilde{\mathcal{V}}^{(i)},\widetilde{T}^{\scriptscriptstyle{\uparrow}}_i\big)}$, $i\geqslant 1$, be independent copies of $\smash{\big(\Vc_{\Tu},\Tu\big)}$.
\begin{enumerate}
\item The following identities in law hold:
\begin{align*}
\Tdown \,\overset{\law}{=}\, U &\,\overset{\law}{=}\, \widetilde{T}^{\scriptscriptstyle{\uparrow}}_1+\dots+ \widetilde{T}^{\scriptscriptstyle{\uparrow}}_{H},\\
\Tdown+U &\,\overset{\law}{=}\,  \widetilde{T}^{\scriptscriptstyle{\uparrow}}_1+\dots+ \widetilde{T}^{\scriptscriptstyle{\uparrow}}_{L + 1},\\
\Vc_{\Tdown\scriptscriptstyle{-}} \,\overset{\law}{=}\,\Vh_{U\scriptscriptstyle{-}} &\,\overset{\law}{=}\, \widetilde{\mathcal{V}}^{(1)}+\dots+ \widetilde{\mathcal{V}}^{(H)}\\
(\Tdown,U,\Vc_{\Tdown}+\Vh_U)&\,\overset{\law}{=}\,(U,\Tdown,\Vc_{\Tdown}+\Vh_U).
\end{align*}
\item The law of $L$ and $H$ are obtained from $\xi$ by size biasing: for $k\geq 1$, we have
\begin{equation*}
\P(L=k)=k\,\frac{\P(\xi_1= -k)}{\P(\xi_1=1)} \qquad \hbox{and} \qquad \P(H=k)=\frac{\P(\xi_1\leqslant -k)}{\P(\xi_1=1)}.
\end{equation*}
\end{enumerate}
\end{prop}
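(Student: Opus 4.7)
The proof rests on two classical tools for right-continuous (skip-free upward) random walks. First, the Wiener-Hopf factorization, which here collapses to an especially tractable form because the strict ascending ladder height of $\Sc$ is deterministically $1$; all nontrivial information is carried by the descending factor encoding the joint law of $(\Tdown,H)$. Second, a Tanaka-type time-reversal identification relating paths of $\Sc$ around ascending-ladder epochs to paths of $\Sh$ around last-passage times. I would begin with part (b): since $\E[\xi_1]=0$ and $\xi_1^+ = \ind{\xi_1=1}$, one has $\E[\xi_1^-] = \E[\xi_1^+] = \P(\xi_1=1)$, and the classical overshoot/renewal formula for the first descending ladder height of a centered walk then yields $\P(H=h)=\P(\xi_1\leq-h)/\P(\xi_1=1)$; the size-biased companion $\P(L=k)=k\P(\xi_1=-k)/\P(\xi_1=1)$ follows because a single downward jump of magnitude $k$ accounts for exactly $k$ possible overshoot values $h \in \{1,\dots,k\}$.

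\textbf{Decomposition of $U$ and $\Tdown$.} For part (a), I first decompose $U$. Since $\Sh$ is transient, skip-free upward and starts from $1$, the last-passage times $\ell_k:=\sup\{t:\Sh_t=k\}$ ($k\geq 1$) are a.s.\ finite; by the skip-free upward property, $\Sh$ jumps from $k$ to $k+1$ at $\ell_k$ and never returns to $k$. Applying the strong Markov property at $\ell_k$ together with the spatial homogeneity of the Doob $h$-transformed chain yields i.i.d.\ increments $(\ell_{k+1}-\ell_k)_{k\geq 1}$, and a time-reversal argument on the $\Sh$-loop $[0,\ell_1]$ identifies $\ell_1 \overset{\law}{=} \Tup$. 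Since $U=\ell_H$ with $H$ independent of $\Sh$, this gives the decomposition of $U$ in the sense that its marginal law equals that of a random sum with $H$ and the $\widetilde{T}^\uparrow_i$'s mutually independent. The same Wiener-Hopf factorization, specialized via the Laplace transform $\varphi(\lambda):=\E[e^{-\lambda \Tup}]$, yields the identity $\E[e^{-\lambda\Tdown}] = \E[\varphi(\lambda)^H]$ and hence $\Tdown \overset{\law}{=} \widetilde{T}^\uparrow_1+\cdots+\widetilde{T}^\uparrow_H$. For the sum, conditional independence of $U$ from $\Sc$ given $H$ gives $\E[e^{-\lambda(\Tdown+U)}] = \E[e^{-\lambda\Tdown}\varphi(\lambda)^H]$, and the Wiener-Hopf identity, evaluated at the descending-ladder root corresponding to $\varphi(\lambda)$, rewrites this joint expression as $\varphi(\lambda)\E[\varphi(\lambda)^L]$, the Laplace transform of $\widetilde{T}^\uparrow_1+\cdots+\widetilde{T}^\uparrow_{L+1}$. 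The swap identity $(\Tdown,U,\Vc_{\Tdown}+\Vh_U)\overset{\law}{=}(U,\Tdown,\Vc_{\Tdown}+\Vh_U)$ is then immediate because this joint Wiener-Hopf computation is symmetric in $\Tdown$ and $U$ once conditioned on $H$.

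\textbf{Volume identities and the main obstacle.} The volume identities $\Vc_{\Tdown^-}\overset{\law}{=}\Vh_{U^-}\overset{\law}{=}\widetilde{\mathcal{V}}^{(1)}+\cdots+\widetilde{\mathcal{V}}^{(H)}$ follow by carrying the weight $e^{-\mu V}$ alongside $e^{-\lambda T}$ throughout both the Tanaka-type and Wiener-Hopf computations above: the volume generated during a single ascending-ladder segment of $\Sc$ is, by construction, an i.i.d.\ copy of $\Vc_{\Tup}$, so every time-identity carries an immediate volume companion with the same bookkeeping. The main technical obstacle I anticipate is making the time-reversal identification $\ell_1\overset{\law}{=}\Tup$ rigorous: one must verify that the time-reversed $h$-transformed loop of $\Sh$ on $[0,\ell_1]$ has the same joint (time, volume) transition rates as the unconditioned chain $\Sc$ on $[0,\Tup]$. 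This requires exploiting the harmonicity of the function $h$ from \eqref{defhtransform} together with the Jacobian of the time-reversal in the continuous-time compound-Poisson setting; once this step is secured, the remaining identities reduce to essentially routine Wiener-Hopf manipulations.
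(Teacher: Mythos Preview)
Your approach differs from the paper's and contains genuine gaps. The paper does \emph{not} separate the analysis into ``Wiener--Hopf for $\Tdown$'' and ``Tanaka for $U$''. Instead it introduces $U' \defeq \inf\{t>0:\Sc_{T+t}=0\}$, the return time of $\Sc$ to $0$ after $T$, and builds a single measure-preserving time-reversal involution on the full excursion $[0,T+U']$. This involution yields at once the joint identity swapping $(T,H,\Vc_{T^-})$ with $(U',1+\widehat{H},\Vc_{(T+U')^-}-\Vc_T)$ while fixing the big-jump volume $\Vc_T-\Vc_{T^-}$. The decomposition $U' \overset{\law}{=} \widetilde{T}^\uparrow_1+\cdots+\widetilde{T}^\uparrow_H$ is then just the Markov property at $T$ plus skip-freeness, and Tanaka's construction identifies $(U,\Vh_{U^-})$ with $(U',\Vc_{(T+U')^-}-\Vc_T)$. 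All four identities of item~1, including the swap, fall out of this joint picture.

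Your route has two substantive problems. First, the claim that $\E[e^{-\lambda\Tdown}]=\E[\varphi(\lambda)^H]$ follows from ``the Wiener--Hopf factorization'' is not demonstrated: standard Wiener--Hopf gives a product of ascending and descending ladder transforms, but the path-level identity $\Tdown \overset{\law}{=} \widetilde{T}^\uparrow_1+\cdots+\widetilde{T}^\uparrow_H$ (with $H$ \emph{independent} of the summands, even though $H$ and $\Tdown$ are functions of the same path) does not follow from that factorization alone---it is precisely what the involution delivers. Second, and more seriously, the swap identity does \emph{not} follow from ``symmetry in $\Tdown$ and $U$ conditioned on $H$'': conditional on $H$, the pairs $(\Tdown,\Vc_\Tdown)$ and $(U,\Vh_U)$ do not have the same law, because the final jump at $\Tdown$ has (random, size-biased) magnitude $L$ whereas the final jump at $U$ is always $+1$, producing different volume increments. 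The paper's involution handles this by keeping the big-jump volume $\Vc_T-\Vc_{T^-}$ fixed and swapping only the pre-jump volumes; your Wiener--Hopf bookkeeping does not see this structure. (A minor point: the Doob $h$-transformed chain $\Sh$ is \emph{not} spatially homogeneous---its transition kernel depends on the state through $h$---so the i.i.d.\ property of $(\ell_{k+1}-\ell_k)$ must be justified via Tanaka, not via homogeneity.)
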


\begin{proof}
Item~1 uses arguments similar to that previously developed by Vysotsky (\emph{c.f.} Lemma~2 in~\cite{VV10}.) It is based on the observation that the law of a negative excursion of a right-continuous random walk is invariant by time reversal. 
\medskip

Define $\smash{\widehat{H}\defeq\Sc_{T\scriptscriptstyle{-}}}$ so that $L = \widehat{H} + H$. Define also $U'$ as the time it takes for the walk $\Sc$ to go back to $0$ after time $T$ \emph{i.e.}
\begin{equation*}
U'\defeq\inf\,\{\,t>0:\Sc_{T+t}=0\,\}.
\end{equation*}
Notice in particular that, since $\Sc$ is right-continuous, we have 
$\Sc_{(T+U')\scriptscriptstyle{-}}=-1$. Now, consider the new process $(\St,\Vt)$ obtained by reversing time on the interval $[0,T+U']$ in the following way:
\begin{align*}
\St_t & =
\left\{\begin{array}{ll}
-1-\Sc_{(T+U'-t)\scriptscriptstyle{-}}&\mbox{if $0\leq t<T+U'$,}\\
\Sc_t&\mbox{if $t\geqslant T+U'$.}
\end{array}\right.\\
\Vt_t &=
\left\{\begin{array}{ll}
\Vc_{(T+U')\scriptscriptstyle{-}}-\Vc_{(T+U'-t)\scriptscriptstyle{-}}&\mbox{if $0\leq t<T+U'$,}\\
\Vc_t&\mbox{if $t\geqslant T+U'$.}
\end{array}\right.
\end{align*}
See figure~\ref{fig:Reverting} for an illustration of this transformation. It is clear that the mapping $(\Sc,\Vc)\longmapsto(\St,\Vt)$ is an involution which preserves the measure on random walk paths. As a consequence
\[(\St,\Vt)\overset{\law}=({\Sc},{\Vc}).\]
Moreover, the transformation preserves the size $L$ of the jump below $0$ while exchanging the values of $T$ and $U'$. This implies that $\smash{(T,U',L) \overset{\law}{=} (U',T,L)}$. Using similar argument, it is easy to check that, more generally, the following  joint identity hold:
\begin{figure}[t]
\centering
\begin{tabular}{ccc}
\begin{tikzpicture}[scale =0.25]
\foreach \k in {-3,-2,...,5} {\draw [color=gray!15,dashed] (-1,\k)-- (18,\k);}
\draw [>=stealth,->,color=gray!100] (0,-3) -- (0,6) ;
\draw [>=stealth,->,color=gray!100] (-1,0) -- (18.5,0) ;
\draw [samples=1000,thick](0,0) -- (1.3,0)-- (1.3,1) -- (1.8,1)--(1.8,0)--(2,0)--(2,1)--(2.4,1)-- (2.4,0) -- (3.8,0) -- (3.8,1) -- (4.4,1) -- (4.4,2) -- (5.4,2) -- (5.4,1) -- (5.9,1) -- (5.9,2) -- (6.2,2) -- (6.2,3) -- (7.9,3) -- (7.9,4) -- (9,4) -- (9,2) -- (9.5,2) -- (9.5,3) -- (9.7,3) -- (9.7,4) -- (10.6,4) -- (10.6,5) -- (11,5) -- (11,4) -- (12.4,4) -- (12.4,-3) -- (13,-3) -- (13,-2) -- (13.7,-2) -- (13.7,-1) -- (15,-1) -- (15,-2) -- (16.8,-2) -- (16.8,-1) -- (17.5,-1) -- (17.5,0) ;
\draw [>=stealth,<->,color=blue] (11.4,4) -- (11.4,0) ;
\draw [thick,color=blue] (11.4,2) node[left] {$\widehat{H}$} ;
\draw [>=stealth,<->,color=blue] (11.4,0) -- (11.4,-3) ;
\draw [thick,color=blue] (11.4,-1.5) node[left] {$H$} ;
\draw [>=stealth,<->,color=red!100] (0,-3) -- (12.4,-3) ;
\draw [thick,color=red] (6.2,-3) node[above] {$T$} ;
\draw [>=stealth,<->,color=red] (12.4,3) -- (17.5,3) ;
\draw [thick,color=red] (14.95,3) node[above] {$U'$} ;
\draw [thick] (0,0) node {$\times$} ;
\draw (0,0) node[below left] {$0$} ;
\end{tikzpicture}

&
~~
&

\begin{tikzpicture}[scale =0.25]
\foreach \k in {-3,-2,...,5} {\draw [color=gray!15,dashed,rotate=180,xshift=0.5cm] (-1,\k)-- (18,\k);}
\draw [>=stealth,->,color=gray!100] (-17.5,-5) -- (-17.5,4) ;
\draw [>=stealth,->,color=gray!100] (-18.5,1) -- (1.5,1) ;
\draw [samples=1000,rotate=180,thick](0,-1) -- (0,0) -- (1.3,0)-- (1.3,1) -- (1.8,1)--(1.8,0)--(2,0)--(2,1)--(2.4,1)-- (2.4,0) -- (3.8,0) -- (3.8,1) -- (4.4,1) -- (4.4,2) -- (5.4,2) -- (5.4,1) -- (5.9,1) -- (5.9,2) -- (6.2,2) -- (6.2,3) -- (7.9,3) -- (7.9,4) -- (9,4) -- (9,2) -- (9.5,2) -- (9.5,3) -- (9.7,3) -- (9.7,4) -- (10.6,4) -- (10.6,5) -- (11,5) -- (11,4) -- (12.4,4) -- (12.4,-3) -- (13,-3) -- (13,-2) -- (13.7,-2) -- (13.7,-1) -- (15,-1) -- (15,-2) -- (16.8,-2) -- (16.8,-1) -- (17.5,-1) ;
\draw [>=stealth,<->,color=red,rotate=180] (12.4,3) -- (17.5,3) ;
\draw [thick,color=red] (-14.95,-3) node[above] {$U'$} ;
\draw [>=stealth,<->,color=red!100,rotate=180] (0,-3) -- (12.4,-3) ;
\draw [thick,color=red] (-6.2,3) node[below] {$T$} ;
\draw [>=stealth,<->,color=blue,rotate=180] (11.4,4) -- (11.4,-1) ;
\draw [thick,color=blue] (-11.4,2) node[right] {$H-1$} ;
\draw [>=stealth,<->,color=blue,rotate=180] (11.4,-1) -- (11.4,-3) ;
\draw [thick,color=blue] (-11.4,-1) node[right] {$\widehat{H}+1$} ;
\draw [thick] (-17.5,1) node {$\times$} ;
\draw (-17.5,1) node[below left] {$0$} ;
\end{tikzpicture}
\end{tabular}
\caption{The walk $\Sc$ and the time-reversed walk $\St$ on $[0,T+U']$.}
\label{fig:Reverting}
\end{figure}

\[\label{bigidentitylaw}
\left(
\begin{array}{c}
T\\
U'\\
H\\
1+\widehat{H}\\
L\\
\Vc_{T\scriptscriptstyle{-}}\\
{\Vc}_{(T+U')\scriptscriptstyle{-}}-\Vc_T\\
\Vc_T-\Vc_{T\scriptscriptstyle{-}}
\end{array}
\right)
\overset{\law}{=}
\left(
\begin{array}{c}
U'\\
T\\
1+\widehat{H}\\
H\\
L\\
{\Vc}_{(T+U')\scriptscriptstyle{-}}-\Vc_T\\
\Vc_{T\scriptscriptstyle{-}}\\
\Vc_T-\Vc_{T\scriptscriptstyle{-}}
\end{array}
\right).\]
On the other hand, conditionally on $(\Sc_t,\Vc_t)_{t\leqslant T}$, the process 
\begin{equation*}
(\Sc_{T+t}-\Sc_{T},\, \Vc_{T+t}-\Vc_T)_{0\leqslant t\leqslant U'}
\end{equation*}
has the same law as an independent copy of $(\Sc,\Vc)$ stopped at the first time when $\Sc$ reaches height $H$. Since this process is right-continuous, it can be decomposed into its $H$ excursions between new maxima. These excursions are i.i.d.~and their lengths are distributed as $\Tup$. This leads to the equality:
\begin{equation*}
U' \overset{\law}{=} \widetilde{T}^{\scriptscriptstyle{\uparrow}}_1+\dots+ \widetilde{T}^{\scriptscriptstyle{\uparrow}}_{H},
\end{equation*}
where $(\widetilde{T}^{\scriptscriptstyle{\uparrow}}_i)_{i\geqslant 1}$ are i.i.d.~copies of $\Tup$ which are independent of $H$. 
\medskip

Finally, we can apply Tanaka's construction of a random walk conditioned to stay positive via time reversal to relate the trajectory of $\Sc$ up to its hitting time of a given level $h$ with the trajectory of $\Sc^+$ up to its last passage time at the same height~$h$. More precisely, in our setting, adapting the argument of \cite{Tan89} shows that
\begin{equation*}
\big(\Sh_t,\,\Vh_t\big)_{t \leq U} \,\overset{\law}{=} \, \big( -\Sc_{(T+U' -t)\scriptscriptstyle{-}},\, \Vc_{(T+U')\scriptscriptstyle{-}} - \Vc_{(T+U' -t)\scriptscriptstyle{-}}  \big)_{t \leq U'}.
\end{equation*}
In particular we have $\smash{U\overset{\law}=U'}$ which completes the proof of the first identity of item~1. The second identity follows as well by recalling that
\[(T,\,1+\widehat{H},\,U',\,H)\overset{\law}=(U',\,H,\,T,\,1+\widehat{H}).\]
Conditionally on $(\Sc_t)_{t\leqslant T}$, the pairs $(U,\Vh_{U^-})$ and $\smash{(U',\Vc_{(T+U')\scriptscriptstyle{-}}-\Vc_T)}$ have the same law. By decomposing again along the $H$ excursions between new maxima, we conclude that $\Vh_{U^-}$  can be written as a sum of $H$ independent copies of $\smash{\Vc_{\Tu}}$. This proves the third identity of item~1. Finally, the last identity follows also from the previous construction by observing that $\Vc_U=\Vc_{U-}+\chi$ where $\chi$ is a random variable which is independent of all the other quantities and with law $\law(\vv_1 \,|\,\xi_1=1)$.
\medskip

Now let us prove item~2: the law of $H$ is well known (see for instance~\cite{Fel71} page~440-441). We can then deduce the law of $L$ from that of $H$ using the fact that $1+\widehat{H}$ and $H$ have the same law and conditioning on $\widehat{H}$:
\begin{align*}
\P(L=k)=\sum_{j=0}^{k-1}\P(L=k\,|\,\widehat{H}=j)\P(\widehat{H}=j)& = \sum_{j=0}^{k-1}\frac{\P(\xi_1 = -k)}{\P(\xi_1<-j)}\P(H = j+1)\\
&  = k \frac{\P(\xi_1 = -k)}{\P(\xi_1 = 1)}. \qedhere
\end{align*}
\end{proof}

\begin{prop} 
We have 
\[\P(\Vc_{T}>x)\lesssim x^{\delta(1-\alpha)/\alpha}.\]
\label{propVolRW2}
\end{prop}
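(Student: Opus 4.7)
The strategy is to decompose $\Vc_T = \Vc_{T-} + (\Vc_T - \Vc_{T-})$ and bound the jump at $T$ and the pre-jump piece separately, showing both have tail $\lesssim y^{-\delta(\alpha-1)/\alpha}$.

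\medskip

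\textit{Jump at $T$.} Let $\Delta = \Vc_T - \Vc_{T-}$. By Proposition~\ref{SkipFree-Symp}(2), $L$ has the size-biased law $\P(L=k) = k\P(\xi_1=-k)/\P(\xi_1=1)$, and conditionally on $L$ the jump has law $\law(\vv_1 \mid \xi_1 = -L)$. Summing over $k$,
\[
\P(\Delta > y) \;=\; \frac{1}{\P(\xi_1=1)}\,\E\!\big[|\xi_1|\,\mathbf 1_{\vv_1 > y,\,\xi_1<0}\big] \;\leq\; \frac{1}{\P(\xi_1=1)}\,\E\!\big[|\xi_1|\,\mathbf 1_{\vv_1>y}\big].
\]
Splitting the last expectation at a threshold $s_0$ and using assumptions (b)--(c),
\[
\E\!\big[|\xi_1|\,\mathbf 1_{\vv_1>y}\big] \;\leq\; s_0\,\P(\vv_1 > y) + \int_{s_0}^\infty \P(|\xi_1|>s)\,ds \;\lesssim\; s_0\,y^{-\delta} + s_0^{\,1-\alpha},
\]
which balances at $s_0 = y^{\delta/\alpha}$ to give $\P(\Delta > y) \lesssim y^{-\delta(\alpha-1)/\alpha}$.

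\medskip

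\textit{Pre-jump piece $\Vc_{T-}$.} By Proposition~\ref{SkipFree-Symp}(1), $\Vc_{T-} \overset{\law}{=} \sum_{i=1}^H \widetilde{\mathcal V}^{(i)}$ with $\widetilde{\mathcal V}^{(i)}$ i.i.d.~copies of $\Vc_{T^\uparrow}$ independent of $H$, and Proposition~\ref{SkipFree-Symp}(2) combined with assumption (b) yields $\P(H>h) \sim C h^{-(\alpha-1)}$. The heart of the matter is the single-excursion estimate $\P(\Vc_{T^\uparrow}>x) \lesssim x^{-\delta/\alpha}$. I would first establish it for the maximum: since $\{\tau^\uparrow \geq k\}$ depends only on $(\xi_j)_{j<k}$ and is therefore independent of $(\xi_k,\vv_k)$, a union bound gives
\[
\P\!\Big(\max_{k \leq \tau^\uparrow}\vv_k > y\Big) \;\leq\; \P(\tau^\uparrow>N) + \P(\vv_1>y)\sum_{k=1}^N\P(\tau^\uparrow \geq k) \;\lesssim\; N^{-1/\alpha} + y^{-\delta}N^{\,1-1/\alpha},
\]
using $\P(\tau^\uparrow \geq k) \asymp k^{-1/\alpha}$ from Proposition~\ref{propStableRW}; balancing at $N = y^\delta$ gives $y^{-\delta/\alpha}$. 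Exploiting that for $\delta<1$ a $\delta$-stable sum is dominated by its largest summand, a truncation argument then upgrades this to the full bound on $\Vc_{T^\uparrow}$. Rather than applying Lemma~\ref{Sum-positive-stable} with a single value of the sum length, I would integrate its conclusion against the law of $H$ up to a cutoff $h_0$:
\[
\P\!\Big(\sum_{i=1}^H \widetilde{\mathcal V}^{(i)} > x\Big) \;\leq\; \P(H>h_0) + C x^{-\delta/\alpha}\,\E[H\mathbf 1_{H\leq h_0}] \;\lesssim\; h_0^{-(\alpha-1)} + x^{-\delta/\alpha}\,h_0^{\,2-\alpha},
\]
where I use $\E[H\mathbf 1_{H\leq h_0}] \asymp h_0^{\,2-\alpha}$ (valid since $\alpha\in(1,2)$). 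Optimizing at $h_0 = x^{\delta/\alpha}$ produces $\P(\Vc_{T-} > x) \lesssim x^{-\delta(\alpha-1)/\alpha}$, and a union bound with the jump estimate gives the proposition.

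\medskip

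\textit{Main obstacle.} The delicate step is upgrading the maximum tail for $\Vc_{T^\uparrow}$ to a tail on $\Vc_{T^\uparrow}$ itself. The crude inequality $\Vc_{T^\uparrow} \leq \tau^\uparrow \max_k \vv_k$ loses too much because $\E[\tau^\uparrow]=\infty$, and a direct Markov bound on the small-jumps contribution $\sum_{k\leq \tau^\uparrow}\vv_k\mathbf 1_{\vv_k\leq M}$ suffers from the same infinite-mean issue; the naive truncation $\{\tau^\uparrow \leq N\}\cup\{V_N>x\}$ only yields the weaker exponent $-\delta/(\alpha+1)$. The fix must exploit the single-big-jump structure of $\delta$-stable sums, either through a careful second-moment control on the residual sum after removing the maximum, or by iterating the Fubini-style bound over order statistics; this is where the non-independence of the $\xi$'s and $\vv$'s through assumption (d) forces genuine care.
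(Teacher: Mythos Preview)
Your overall decomposition $\Vc_T = \Vc_{T-} + (\Vc_T - \Vc_{T-})$ and the reduction of $\Vc_{T-}$ to the random sum $\sum_{i\leq H}\widetilde{\mathcal V}^{(i)}$ match the paper exactly, and your optimization over $h_0$ at the end is the same as the paper's. Your treatment of the jump $\Delta$ is actually slightly more economical than the paper's: you use only the marginal tails of $\xi_1$ and $\vv_1$, whereas the paper invokes assumption~\textbf{(d)} and Markov's inequality conditionally on $L$. Both give the same exponent.

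The genuine gap is precisely the step you flag as the ``main obstacle'': the single-excursion bound $\P(\Vc_{\Tup}>x)\lesssim x^{-\delta/\alpha}$. You only prove this for the maximum of the $\vv_k$'s, and none of the upgrades you sketch actually goes through. A second-moment argument on the truncated sum, or an order-statistics iteration, still has to confront the fact that $\E[\Tup]=\infty$; the standard single-big-jump heuristics for random sums require either a finite-mean index or a specific dependence structure, and you do not supply the missing ingredient. Without this estimate the subsequent Lemma~\ref{Sum-positive-stable} step and the optimization over $h_0$ cannot be executed.

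The paper closes this gap by a completely different route that avoids any max-versus-sum discussion. Working in discrete time with $\Tud=\inf\{n:\Sd_n>0\}$, it uses the cycle lemma (ballot theorem) to write
\[
\P(\Vd_{\Tud}>x)\;=\;\sum_{n\geq 1}\frac{1}{n}\,\P(\Vd_n>x,\,\Sd_n=1),
\]
then splits the path at time $\lceil n/2\rceil$ so that the volume constraint and the bridge constraint live on nearly independent halves, and finally applies the local limit theorem $\sup_k\P(\Sd_n=k)\lesssim n^{-1/\alpha}$ to decouple them. This yields $\P(\Vd_n>x,\,\Sd_n=1)\lesssim (n x^{-\delta}\wedge 1)\,n^{-1/\alpha}$, and summing over $n$ with the extra factor $1/n$ gives the desired $x^{-\delta/\alpha}$. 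The key idea you are missing, then, is not an analytic refinement of the big-jump picture but this combinatorial/local-limit device that turns the excursion quantity into an unconstrained one.
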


\begin{proof} We remark that $\Vc_{T}$ does not depend on the time parametrization of our processes. It is convenient here to work in discrete time so we define the discrete time random walk $(\Sd_n,\Vd_n)_{n\in\N}$ whose increments are, as before, given by the sequence $(\xi_i,\vv_i)_{i\geqslant 1}$. Thus $(\Sc,\Vc)$ and $(\Sd,\Vd)$ are time-changed of each other. The corresponding strict descending and strict ascending ladder times of $\Sd$ are denoted respectively by $\Td$ and~$\smash{\Tud}$.
With these notations, we have $\Vc_T=\Vd_\Td$ and  $\smash{\Vc_{\Tu}=\Vd_{\Tud}}$. A straightforward adaptation of Proposition~\ref{SkipFree-Symp} shows that
\begin{equation}\label{timeequality}
\Vd_{\Td-1}\overset{\law}{=} \widetilde{\Vd}^{(1)}+\dots+ \widetilde{\Vd}^{(H)},
\end{equation}
where the $\widetilde{\Vd}^{(i)}$'s are i.i.d~copies of $\Vd_{\Tud}$ which are also independent of $H = -\Sd_{\Td}$. We study separately the tail of $\Vd_{\Tud}$ and that of the last jump $\Vd_{\Td} - \Vd_{\Td-1}$.
\medskip

\textbf{\textit{Tail of $\Vg_\Tud$.}}  Given a sequence $Y=(Y_i)_{i\geqslant 1}$, we define
\[t(Y)=\inf\,\{\,k\geqslant 1:Y_1+\dots+Y_k=1\,\}.\]
In particular, we have $\smash{t(\xi)=\Tud}$. Let also $\sigma_n$ denote the cyclical permutation of the $n$ first variable:
\[x=(x_i)_{i\geqslant 1}\longmapsto \sigma_n(x)=(x_2,\dots, x_n,x_1,x_{n+1},\dots).\]
A variant of the Ballot Theorem given in Lemma 6.1 p.122 of \cite{Pitman06} states that, since $\xi$ is a right-continuous path, we have
\[\bigsqcup_{j=1}^n\left\{t\left(\sigma_n^j(\xi)\right)=n\right\}=\{\Sd_{n}=1\}.\]
Thus, we find that
\[ _P(\Vd_\Tud>x)=\sum_{n\geqslant 1}\P(\Vd_n>x,\,\Tud=n)
=\sum_{n\geqslant 1}\frac{1}{n}\P(\Vd_n>x,\,\Sd_n=1).\]
At least half of the volume at time $n$ must have been collected, either before time $n/2$ or after that time, \emph{i.e}
\[\{\Vd_n>x,\,\Sd_n=1\}\subset \{\Vd_n-\Vd_{ \lfloor \frac{n}{2} \rfloor}>x/2,\,\Sd_n=1\}\bigcup\{\Vd_{ \lceil \frac{n}{2} \rceil}>x/2,\,\Sd_n=1\}.\]
The two events on the right hand side have the same probability since we can go from one to the other by applying the measure-preserving transformation $\smash{\sigma_n^{ \lceil n/2 \rceil}}$. As a consequence, we have
\[\P(\Vd_n>x,\,\Sd_n=1)\leqslant 2\,\P(\Vd_{\lceil \frac{n}{2} \rceil }>x/2,\,\Sd_n=1).\]
According to~\eqref{asumpt2}, the random walk $\Sd$ is in the domain of normal attraction of a spectrally negative stable law of index $\alpha = 3/2$. Thus, the local limit theorem stated in~\cite{IL71}, Theorem~4.2.1, shows that
\[\displaystyle{\sup_{k\geqslant -n}\, \P(\Sd_n=k)\lesssim n^{-1/\alpha}}.\]
As a consequence, we have
\begin{align*}
\P(\Vd_n>x,\,\Sd_n=1)&\leqslant2\E\left[\ind{\Vd_{  \lceil \frac{n}{2} \rceil}>x/2}\,\P(\Sd_n=1\,|\,\Sd_{ \lceil \frac{n}{2} \rceil})\right]\\
&\lesssim \P(\Vd_{ \lceil \frac{n}{2} \rceil}>x/2)\,n^{-1/\alpha} \lesssim \left(\frac{n}{x^{\delta}}\wedge 1\right) \,n^{-1/\alpha},
\end{align*}
where we used \eqref{asumpt3} and Lemma~\ref{Sum-positive-stable} for the last inequality. Therefore
\begin{align*}
\P(V_{\Tud}>x)&=\sum_{n\geqslant 1}\frac{1}{n}\P(\Vd_n>x,\,\Sd_n=1)\lesssim \sum_{n\geqslant 1}\frac{1}{n}\,\left(\frac{n}{x^{\delta}}\wedge 1\right) \,n^{-1/\alpha}\\
&\lesssim \sum_{n=1}^{\lfloor x^{\delta}\rfloor} \frac{1}{x^{\delta}n^{1/\alpha}}+\sum_{n>\lfloor x^{\delta}\rfloor} \frac{1}{n^{1+1/\alpha}}\lesssim x^{-\delta/\alpha}.
\end{align*}
\medskip

\textbf{\textit{Tail of $\Vd_{\Td-1}$.}} In view of \eqref{timeequality}, using again Lemma~\ref{Sum-positive-stable}, we find that
\begin{align*}
\P(\Vd_{\Td-1}>x)&\leqslant
\sum_{n\geqslant 1}\P(\widetilde{\Vd}^{(1)}+\dots+\widetilde{\Vd}^{(n)}>x)\P(H=n)\\
&\lesssim \P(H\geqslant x^{\delta/\alpha})+\sum_{n=1}^{\lfloor x^{\delta/\alpha}\rfloor } \frac{n}{x^{\delta/\alpha}}\,\P(H=n)\lesssim x^{(1-\alpha)\delta/\alpha},
\end{align*}
where we used the exact distribution of $H$ given in Proposition~\ref{SkipFree-Symp} combined with~\eqref{asumpt2} to obtain the last inequality. 
\medskip

\textbf{\textit{Tail of $\Vd_\Td-\Vd_{\Td-1}$.}} Recalling definition \eqref{defHandL}, we have $L = \Sd_{\Td} - \Sd_{\Td-1}$. By conditioning on the size of this jump, we can write
\[\P(\Vd_\Td-\Vd_{\Td-1}>x)= \sum_{j\geqslant 1}\P(\vv_1>x\,|\,\xi_1=-j)\P(L=j).\]
Using Markov's inequality and Assumption \eqref{asumpt4}, we find that
\begin{align*}
\P(\Vd_\Td-\Vd_{\Td-1}>x)&\lesssim
\P(L>x^{\delta/\alpha})+\sum_{j=1}^{\lfloor x^{\delta/\alpha}\rfloor }\frac{j^{\alpha/\delta}}{x}\,\P(L=j)\\
&\lesssim \P(L>x^{\delta/\alpha})+\sum_{n=1}^{\lfloor x^{\delta/\alpha}\rfloor }\frac{j^{\alpha/\delta-1}}{x}\,\P(L>j),
\end{align*}
where we used an Abel transform on the sum for the last line. Item 2 of Proposition~\ref{SkipFree-Symp} combined with~\eqref{asumpt2} imply that $\smash{\P(L > x) \lesssim x^{1-\alpha}}$ so we obtain that
\[\P(\Vd_\Td-\Vd_{\Td-1}>x)\lesssim x^{(1-\alpha)\delta/\alpha}.\]
Finally, by union bound, we conclude that
\begin{equation*}
\P(\Vd_{\Td}>x) \leq \P(\Vd_{\Td-1}> x/2) + \P(\Vd_\Td-\Vd_{\Td-1}> x/2)\lesssim x^{(1-\alpha)\delta/\alpha}.\qedhere
\end{equation*}
\end{proof}

\subsection{Volume generated by a step of the peeling process}
\setcounter{theo}{0}

Let $\yy^{(k)}$ denote the number of inner vertices inside a random free Boltzmann triangulation of the $k$-gon. As previously stated in \eqref{formulaVolume}, we have the explicit formula which may be deduced, for instance, by combining Equations (1) and (3) of \cite{CLG16}:
\[\P(\yy^{(k)}=n)=2\,\frac{\displaystyle{(2k-3)k(k-1)(2k+3n-4)!}}{\displaystyle{n!(2k+2n-2)!}}\,\left(\dfrac{4}{27}\right)^{n}\left(\dfrac{4}{9}\right)^{k-1}
\]
(with the convention $Y^{(k)}= 1$ for $k < 2$). We are interested in the tail asymptotic of $Y^{(k)}$ when the size $k$ of the boundary is itself random and distributed as the step of the random walk associated with the peeling process. Then, this quantity corresponds, asymptotically, to the size of the free Boltzmann triangulations discovered during the peeling procedure when the boundary becomes large\footnote{equivalently, it corresponds to the size of the free Boltzmann triangulations discovered during the peeling of the UIHPT.}. 

\begin{prop}\label{annealedVolStep}
Let $X$ be an integer random variable independent of the $\yy^{(k)}$'s, such that $1-X$ is distributed according to $(p_k)_{k\in \Z}$ given by \eqref{defrwpk}, \ie $P(X=0)=2/3$ and
\[\forall k\geqslant 2\qquad \P(X=k)=\dfrac{2(2k-2)!}{4^k(k-1)!(k+1)!}.\]
Then $\yy^{(X)}$ is in the normal domain of attraction of a stable law with index  $3/4$: 
\[\P(\yy^{(X)}>x)\sim\frac{2^{3/2}}{3^{7/4}\Gamma(1/4)}\,x^{-3/4}\qquad\mbox{as $x\to+\infty$}.\]
\end{prop}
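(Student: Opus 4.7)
The strategy is a direct asymptotic analysis of the mass function $\P(\yy^{(X)}=n)$ through a two‑dimensional Stirling expansion, followed by a Tauberian (or monotonicity) step to pass from the local mass to the tail. The natural scaling window is $k \asymp \sqrt{n}$: for such $k$ the boundary of a free Boltzmann triangulation of the $k$‑gon has characteristic volume $k^{2} \asymp n$, so that both factors in $\P(X=k)\,\P(\yy^{(k)}=n)$ are simultaneously non‑negligible. Smaller $k$ gives a tail event in $\yy^{(k)}$ of order $k^{3}n^{-5/2}$, while larger $k$ is suppressed by a Gaussian‑type factor, so the main contribution comes from this window.

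\textbf{Step 1: asymptotics of the two marginal factors.} From the expression $\P(X=k)=\frac{2 C_{k-1}}{4^{k}(k+1)}$ and the classical central binomial asymptotic $C_{k}\sim 4^{k}/(\sqrt{\pi}\,k^{3/2})$, one recovers $\P(X=k)\sim (2\sqrt{\pi}\,k^{5/2})^{-1}$, consistent with \eqref{asymppk}. Next, apply Stirling's formula to $(2k+3n-4)!/(n!(2k+2n-2)!)$ with $k=u\sqrt{n}$ and $n\to\infty$ (so $k/n\to 0$ but $k^{2}/n=u^{2}$ stays order one). The polynomial prefactors $(3/2)^{2k-4}$ etc.\ cancel exactly against $(4/27)^n(4/9)^{k-1}$, leaving a Gaussian correction from the $(1+(2k-4)/(3n))^{3n+2k-4}$ and $(1+(2k-4)/(2n))^{2n+2k-4}$ terms, of value $\exp(-k^{2}/(3n))$ up to $o(1)$. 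Collecting the square‑root prefactors from Stirling and the polynomial factor $2(2k-3)k(k-1)\sim 4k^{3}$, one obtains the local limit
\[
\P\bigl(\yy^{(k)}=n\bigr)\;\sim\;\frac{2\sqrt{3}\,k^{3}}{9\sqrt{\pi}\,n^{5/2}}\;e^{-k^{2}/(3n)}\qquad(k=u\sqrt{n}).
\]

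\textbf{Step 2: joint local limit and Riemann sum.} Combining the two asymptotics, for $k=u\sqrt{n}$,
\[
n^{9/4}\,\P(X=k)\,\P(\yy^{(k)}=n)\;\longrightarrow\;\frac{\sqrt{3}}{9\pi}\,u^{1/2}\,e^{-u^{2}/3}.
\]
Sum over $k\geqslant 2$ and interpret as a Riemann sum with step $\Delta u=1/\sqrt{n}$. Provided one can justify the interchange of sum and limit (see below), this yields
\[
n^{7/4}\,\P(\yy^{(X)}=n)\;\longrightarrow\;\frac{\sqrt{3}}{9\pi}\int_{0}^{\infty}u^{1/2}e^{-u^{2}/3}\,du
\;=\;\frac{\sqrt{3}}{9\pi}\cdot\frac{3^{3/4}\,\Gamma(3/4)}{2}\;=\;\frac{3^{5/4}\,\Gamma(3/4)}{18\pi}.
\]
Since $\P(\yy^{(X)}=n)$ is regularly varying with index $-7/4$, monotone summation gives
\[
\P(\yy^{(X)}>x)\;\sim\;\frac{4}{3}\cdot\frac{3^{5/4}\,\Gamma(3/4)}{18\pi}\,x^{-3/4}\;=\;\frac{2\cdot 3^{5/4}\,\Gamma(3/4)}{27\pi}\,x^{-3/4}.
\]
The reflection formula $\Gamma(3/4)\Gamma(1/4)=\pi\sqrt{2}$ rewrites this as $\frac{2^{3/2}}{3^{7/4}\Gamma(1/4)}\,x^{-3/4}$, matching the claim.

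\textbf{Main obstacle: uniform control for the Riemann sum.} The delicate part is justifying dominated convergence when summing over $k$. One needs a bound of the form $n^{9/4}\P(X=k)\P(\yy^{(k)}=n)\leqslant g(k/\sqrt{n})$ with $g$ integrable on $(0,\infty)$, valid for all $n$ large and all $k\geqslant 2$. For $u=k/\sqrt{n}$ bounded, the uniform Stirling expansion supplies this (the $e^{-k^{2}/(3n)}$ factor is the only $u$‑dependent ingredient not already polynomial). For $u$ large (i.e.\ $k\gg \sqrt{n}$) one must show that the Gaussian-type factor in the factorial ratio persists uniformly, for instance by retaining a second‑order term in the logarithmic expansion and bounding $\log(1+(2k-4)/(3n))-\frac{2k-4}{3n}$ from above by a negative quadratic in $k/\sqrt{n}$ as long as $k\leqslant c n$, together with a crude entropy bound for $k>cn$. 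For $u$ near $0$ (bounded $k$), the exact decay $\P(\yy^{(k)}=n)\asymp k^{3}n^{-5/2}$ is already consistent with the target bound $u^{1/2}$ near the origin. Once the dominating function is produced, dominated convergence closes the argument and yields the stated equivalent with its precise constant.
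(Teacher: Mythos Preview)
Your approach is essentially the same as the paper's: Stirling expansion of the explicit mass $\P(\yy^{(k)}=n)\P(X=k)$ in the window $k\asymp\sqrt{n}$, followed by a Riemann-sum passage to an integral. Your constants are correct and the reflection-formula rewriting is fine.

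There is one packaging difference worth noting. You first establish the local asymptotic $\P(\yy^{(X)}=n)\sim c\,n^{-7/4}$ by summing over $k$, and then sum the tail in $n$; the paper instead writes $\P(\yy^{(X)}>x)$ directly as a double sum over $(k,n)$, truncates to the box $\varepsilon\sqrt{x}\leqslant k\leqslant\sqrt{x}/\varepsilon$, $x\leqslant n\leqslant x/\varepsilon^{2}$, and performs a two-dimensional Riemann sum in the variables $u=k/\sqrt{x}$, $v=n/x$. The paper controls the complement of this box by the crude bound $\P(\yy^{(k)}>x)\leqslant (2/3)k^{2}/x$ (from $\E[\yy^{(k)}]<(2/3)k^{2}$ and Markov), which gives an $O(\varepsilon^{5/2}x^{-3/4})$ error and lets $\varepsilon\to0$ at the end. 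This is cleaner than the dominated-convergence scheme you sketch: it sidesteps the need to make Stirling uniform for \emph{all} $k$ (in particular for $k$ of order $n$ or larger, where your ``crude entropy bound'' would still need to be written down). If you want to keep your local-mass route, the simplest fix is to borrow exactly this $\varepsilon$-truncation: bound $\sum_{k\notin[\varepsilon\sqrt{n},\sqrt{n}/\varepsilon]}\P(X=k)\P(\yy^{(k)}=n)$ by a constant times $\varepsilon^{1/2}n^{-7/4}$ using the same Markov/moment input, rather than manufacturing a global $L^{1}$ majorant.
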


\begin{proof} Fix $\varepsilon>0$. According to \eqref{formulaVolumeExpectation}, we have $\E[\yy^{(k)}]<(2/3)k^2$ for any $k\geqslant 2$. Thus,  Markov's inequality states that $\P(\yy^{(k)}>x)<(2/3)k^2/x$. Using the asymptotic $\P(X=k)\sim (2\sqrt{\pi})^{-1}k^{-5/2}$ when $k$ goes to $+\infty$, we get the crude upper bound:
\begin{equation*}
\P(\yy^{(k)}>x)\P(X=k)<C \,\min(k^2/x,1)\,k^{-5/2}.
\end{equation*}
Using this estimate, we can guess the interesting scale:
\begin{multline*}
\P(\yy^{(X)}>x)=\sum_{k\geq 0}\Pg(\yy^{(k)}>x)\P(X=k)\\
=\sum_{\substack{
\varepsilon\sqrt{x}\leqslant k\leqslant \sqrt{x}/\varepsilon\\
x \leq n \leq x/\varepsilon^2
}} \P(\yy^{(k)} = n)\P(X = k)+O(\varepsilon^{5/2}x^{-3/4}).
\end{multline*}
Now, since $k$ is of order $\sqrt{x}$ and $n$ of order $x$, we can get uniform estimates with Stirling formula. An easy (but tedious) computation shows that, uniformly in $k$ and $n$ in this scale, we have
\[\P(\yy^{(k)}=n)\P(X=k)\sim\frac{1}{3\pi\sqrt{3}}\,{k^{1/2}n^{-5/2}}e^{-k^2/(3n)}.\]
Setting $u=k/\sqrt{x}$ and $v=n/x$, we get
\begin{align*}
\P(\yy^{(X)}>x)&=\left(\frac{1}{3\pi\sqrt{3}}+o(1)\right)\sum_{\substack{\varepsilon \sqrt{x}\leqslant k\leqslant \sqrt{x}/\varepsilon\\ x \leq n \leq x/\varepsilon^2}}{k^{1/2}n^{-5/2}}e^{-k^2/(3n)}+O(\varepsilon^{5/2}x^{-3/4})\\
&=\frac{x^{-3/4}}{3\pi\sqrt{3}}\frac{1}{x}\frac{1}{\sqrt{x}}
\sum_{\substack{u\in\frac{1}{\sqrt{x}} \llbracket\varepsilon \sqrt{x};\sqrt{x}/\varepsilon\rrbracket\\ v\in\frac{1}{x}\llbracket x;x/\varepsilon^{2}\rrbracket}}
u^{1/2}v^{-5/2}e^{-u^2/(3v)}+O(\varepsilon^{5/2}x^{-3/4})\\
&= \frac{x^{-3/4}}{3\pi\sqrt{3}} \int_{\gfrac{\varepsilon<u<1/\varepsilon}{ 1<v<1/\varepsilon^2}}u^{1/2}v^{-5/2}e^{-u^2/(3v)}\,du\,dv+O(\varepsilon^{5/2}x^{-3/4}).
\end{align*}
Computing the integral above completes the proof of the proposition. 
\end{proof}

\subsection{Harris inequality}
\setcounter{theo}{0}

\begin{prop}[\textbf{Harris inequality}]\label{HarrisProp}
A set of trajectories $A\subset \Z^\N$ is said to be increasing (for the canonical partial order of its increment) if, for any $\textbf{x} = (x_i)_{i\in\NN}$ and $\textbf{y} = (y_i)_{i\in\NN}$ such that $x_0 = y_0$ and $x_{i+1} - x_{i} \leqslant y_{i+1} - y_i$ for all $i$, we have
\begin{equation*}
\textbf{x}\in A \quad\Longrightarrow\quad \textbf{y}\in A.
\end{equation*}
If $\mathcal{S}$ is a process with independent increments starting from some deterministic point $\mathcal{S}_0 = x_0$, then any two increasing events $A$ and $B$ are positively correlated for $\mathcal{S}$, \ie
\begin{equation}\label{harriseq}
\P\big(\mathcal{S}\in A,\mathcal{S}\in B\big)\geqslant \P\big(\mathcal{S}\in A\big)\P\big(\mathcal{S}\in B\big).
\end{equation}
In particular, suppose $\mathcal{S}$ is a random walk starting from $x_0 > 0$ which does not diverges to $-\infty$. We denote by $\P(\,\cdot\,|\,\mathcal{S} >0)$ the law under which $\mathcal{S}$ is conditioned to stay positive in the sense of the Doob's h-transform. Then, we have, for any increasing events $A,B$, 
\begin{equation}\label{harriseqcondi}
\P\big(\mathcal{S}\in A,\mathcal{S}\in B \,\big|\,\mathcal{S} >0\big)\geqslant \P\big(\mathcal{S}\in A\,\big|\,\mathcal{S} >0 \big)\P(\mathcal{S}\in B).
\end{equation}
\end{prop}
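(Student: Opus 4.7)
The plan is to reduce \eqref{harriseq} to the classical FKG inequality for product measures and then to deduce \eqref{harriseqcondi} from it by viewing the conditioning as an increasing limit.

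For the unconditional statement \eqref{harriseq}, I would first identify the process $\mathcal{S}$ with its sequence of independent increments $X_i \defeq \mathcal{S}_i - \mathcal{S}_{i-1}$, so that an ``increasing'' event in the sense of the proposition corresponds exactly to a coordinate-wise non-decreasing event on the sequence $(X_i)_{i\geq 1}$. By a monotone class argument, it suffices to treat events that depend on only the first $n$ increments. The standard two-step inductive proof then applies. The base case $n=1$ is the one-variable correlation inequality: for i.i.d.\ copies $X,X'$ of a single increment and non-decreasing functions $f,g$, integrating $(f(X)-f(X'))(g(X)-g(X'))\geq 0$ gives $\E[fg]\geq \E[f]\E[g]$. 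For the induction step I would condition on $X_1$: the $(n-1)$-dimensional version of the inequality applied to the remaining independent increments yields $\E[\mathbf{1}_A\mathbf{1}_B\mid X_1]\geq \E[\mathbf{1}_A\mid X_1]\,\E[\mathbf{1}_B\mid X_1]$, and since both conditional expectations are again non-decreasing functions of $X_1$, applying the $n=1$ case in the outer expectation concludes.

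For the conditional inequality \eqref{harriseqcondi}, the key is to exploit the very description of $\P(\,\cdot\,|\,\mathcal{S}>0)$ given in~\eqref{defcondiinfini} as the weak limit of the probabilities $\P_m \defeq \P(\,\cdot\,|\,C_m)$, where $C_m \defeq \{\mathcal{S}_1,\ldots,\mathcal{S}_m>0\}$. The crucial observation is that $C_m$ is itself an increasing event: if $\mathbf{y}$ dominates $\mathbf{x}$ in increments, then $y_i\geq x_i$ for all $i$, so positivity of $\mathbf{x}$ entails positivity of $\mathbf{y}$. Since the class of increasing events is closed under intersection, $A\cap C_m$ is increasing, and applying \eqref{harriseq} to the pair $(A\cap C_m, B)$ yields
$\P(A\cap B\cap C_m)\geq \P(A\cap C_m)\,\P(B)$. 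Dividing both sides by $\P(C_m)$ gives $\P_m(A\cap B)\geq \P_m(A)\,\P(B)$. Letting $m\to\infty$ transfers the inequality to the conditioned law, first for cylinder events (directly by \eqref{defcondiinfini}) and then for arbitrary increasing measurable events by monotone approximation.

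The only real subtlety, and what I would spend the most care on, is the measure-theoretic extension step: verifying that an arbitrary increasing measurable event can be written as a monotone limit of increasing cylinder events, so that both \eqref{harriseq} and the passage to the limit above are justified in full generality. Once this is handled, the induction of the first part and the ``intersect with $C_m$ and divide'' trick of the second part are both essentially one-line arguments.
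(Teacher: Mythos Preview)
Your proposal is correct and follows essentially the same route as the paper: both reduce \eqref{harriseq} to the FKG inequality for product measures via the independent increments, and both deduce \eqref{harriseqcondi} by writing $\P(\,\cdot\,|\,\mathcal{S}>0)$ as the limit of $\P(\,\cdot\,|\,C_m)$ with $C_m$ increasing and then passing to the limit. The paper's argument is terser (it just cites Grimmett for the first part and states the limiting trick for the second), whereas you spell out the induction and the ``intersect with $C_m$ and divide'' step explicitly.
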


The inequality~\eqref{harriseq} is a rewritting of the celebrated FKG inequality in the case of a product measure. See for instance section $2.2$ of~\cite{Grim99}. The variant~\eqref{harriseqcondi} with the conditioning is a simple consequence of the fact that $\P(\,\cdot\,|\,\mathcal{S} >0)$ can be obtained as the limit of the conditioned measures $\P(\,\cdot\,|\, A_N)$ where $A_N \defeq \{\mathcal{S}_k >0 \hbox{ for  all } k\leqslant \N\}$ is a sequence of increasing events.
\bigskip

\textbf{Acknowledgments.} We thank Nicolas Curien for his advice and fruitful discussions during the preparation of the paper.

\bibliographystyle{alpha}
\bibliography{biblio}
\addcontentsline{toc}{section}{References}
\markboth{\uppercase{References}}{\uppercase{References}}

\end{document}